\newcommand{\tends}[1]{\mbox{\space \raise-2mm
\hbox{$\textstyle\longrightarrow\atop\scriptstyle {#1}$} \space}}
\newcommand{\Id}{\operatorname{Id}}
\newcommand{\one}{\pmb{1}}
\newcommand{\st}{\,;\;} 
\newcommand{\cp}{\G\ltimes C(\Om)}
\newcommand{\G}{\Gamma}
\newcommand{\Om}{\Omega}
\newcommand{\inv}[1]{{#1}^{-1}}
\newcommand{\norm}[1]{\|{#1}\|}
\newcommand{\w}[1]{\widehat{#1}}
\newcommand{\mcJ}{\mathcal J}
\newcommand{\mcK}{\mathcal K}
\newcommand{\mcU}{\mathcal{U}}
\newcommand{\Aut}{\operatorname{Aut}}
\newcommand{\mcH}{\mathcal H}
\newcommand{\eps}{\varepsilon}
\newcommand{\mcD}{\mathcal{D}}
\newcommand{\N}{\mbox{${\mathbb{N}}$}}
\newcommand{\C}{\mbox{${\mathbb  C}$}}
\newcommand{\picap}{\widehat{\pi}}
\newcommand{\period}{\;.}
\newcommand{\Hcap}{\widehat{H}}
\newcommand{\tr}{\mathrm{tr}}
\newcommand{\ID}{\operatorname{Id}}
\newcommand\atopn[2]{\genfrac{}{}{0pt}{}{#1}{#2}}
\newcommand{\Ker}{\mathrm{Ker}}
\newcommand{\im}{\mathrm{Im}}
\newcommand{\dfn}{\emph}
\newtheorem{thm}{Theorem}[section]
 \newtheorem*{reftheorem1}{Theorem \reftotheorem}
 \newtheorem{lem}[thm]{Lemma}
 \newtheorem{prop}[thm]{Proposition}
 \theoremstyle{definition}
 \newtheorem{defn}[thm]{Definition}
 \newtheorem{dthm}[thm]{Duplicity Theorem}
  \newtheorem{othm}[thm]{Oddity Theorem}
 \theoremstyle{remark}
 \newtheorem{rem}[thm]{Remark}
\begin{document}

\title[vector-valued multiplicative functions, III ]
{Free group representations from vector-valued multiplicative functions, III }
\author{M. Gabriella Kuhn}
\address{ Dipartimento di Matematica e Applicazioni\\
 Universit\`{a} di Milano ``Bicocca'', Via Cozzi 53, 20125 Milano, ITALY }
\curraddr{}
\email{mariagabriella.kuhn@unimib.it}
\thanks{}

\author{Sandra Saliani}
\address{Dipartimento di Matematica, Informatica ed Economia\\
 Universit\`{a} degli Studi
 della Basilicata, Viale dell'ateneo lucano 10, 85100 Po\-ten\-za, ITALY}
\curraddr{}
\email{sandra.saliani@unibas.it}
\thanks{}

\author{Tim Steger}
\address{Matematica \\ Universit\`{a} degli Studi di Sassari,
Via Piandanna 4, 07100 Sassari, ITALY}
\curraddr{}
\email{steger@uniss.it}
\thanks{}


\date{\today}

\dedicatory{}

\begin{abstract}
Let $\pi$ be an irreducible unitary representation of a finitely
generated nonabelian free group $\G$; suppose $\pi$~is weakly
contained in the regular representation. In 2001 the first and third
authors conjectured that such a representation must be either
\dfn{odd} or \dfn{monotonous} or \dfn{duplicitous}. In 2004 they
introduced the class of \dfn{multiplicative representations}: this is
a large class of rep\-res\-enta\-tions obtained by looking at the action
of~$\G$ on its Cayley graph. In the second paper of this series we
showed that some of the multiplicative representations were
monotonous.  Here we show that all the other multiplicative
representations are either odd or duplicitous. The conjecture is
therefore established for multiplicative representations.
\end{abstract}

\keywords{Irreducible unitary representations, Free groups, Boundary realization, Duplicity, Oddity}

 \subjclass{43A65, 43A35, 15A42, 15B48, 22D25, 22D10}

\maketitle

\section{Introduction}
\label{intro}
Let $\G$~be a free group on a finite set of generators, $\Om$~its
Gromov boundary and~$C(\Om)$ the $C^*$-algebra of complex continuous
functions on~$\Om$. Given a unitary representation $(\pi,\mcH)$
of~$\G$, we say that $\iota$~is a boundary realization of $\pi$ if
$\iota$~is an isometric $\Gamma$-inclusion of~$\mcH$ into~$\mcH'$
where
  \begin{itemize}
  \item[$\bullet$] $\mcH'$ is the representation space of a
    $\cp$-representation~$\pi'$,
  \item[$\bullet$] $\iota(\mcH)$ is cyclic for the action of~$C(\Om)$
    on~$\mcH'$.
  \end{itemize}
One thinks of the map~$\iota$ as an identification of~$\mcH$ with a
subspace of an $L^2$-space on~$\Om$, where the $L^2$-space carries a
$\Gamma$-action compatible with the $\Gamma$-action on~$\Om$ and
the action of~$C(\Om)$ is by pointwise multiplication.

For brevity we shall say that a representation is \dfn{tempered} if it
is weakly contained in the regular representation. Note that $\pi$~has
a boundary realization if and only if it is tempered (see
\cite{HKS19}).  Observe that $\iota$~is an isometry, but it needn't be
surjective!  Call a boundary realization $\iota:\mcH\to\mcH'$
\dfn{perfect} if~$\iota$ is a unitary equivalence, i.e. a bijection
and not just an injection.

Fix a tempered irreducible unitary representation~$\pi$ of~$\G$.  We
say that
$\pi$~satisfies \emph{monotony\/} if
\begin{itemize}
\item[$\bullet$] Up to equivalence $\pi$~has a single boundary realization.
\item[$\bullet$] That realization is perfect.
\end{itemize}
$\pi$~satisfies \emph{duplicity\/} if
\begin{itemize}
\item[$\bullet$] Up to equivalence $\pi$~has exactly two perfect boundary
realizations, $(\iota_1,\pi_1')$ and $(\iota_2,\pi_2')$.  \item[$\bullet$]
Let~$\pi'$ acting on $\mcH'_1\oplus\mcH'_2$ be the direct sum of~$\pi_1'$
and~$\pi_2'$.  Up to equivalence all the imperfect boundary
realizations of~$\pi$ are given by the maps $v\mapsto
(t_1\iota_1(v),t_2\iota_2(v))$ where $t_1,t_2>0$ and $t_1^2+t_2^2=1$.
\end{itemize}
$\pi$~satisfies \emph{oddity\/} if
\begin{itemize}
\item[$\bullet$] Up to equivalence $\pi$~has exactly one boundary realization,
$(\iota,\pi')$ where $\pi'$~acts on~$\mcH'$.
\item[$\bullet$] That realization is not perfect.
\end{itemize}

The principal series representations considered by Fig\`a-Talamanca
and Picardello and Fig\`a-Talamanca and Steger \cite{FTP1}, \cite{FTS}
satisfy duplicity, except for the two at the endpoints, which satisfy
monotony.  Examples of irreducible representations satisfying oddity
can be found in the papers of Paschke (see~\cite{Pa1}, \cite{Pa2}) and
Example~6.5 of~\cite{KS04}.

In \cite{KS04} the first and third authors constructed a class of
\emph{multiplicative representations}.  This class is large enough to
contain more or less all previously constructed tempered irreducible
representations whose construction uses the action of~$\G$ on its
Cayley graph. In the same paper they proved that those representations
are irreducible \emph{as representations of} $\cp$.  The second paper
in this series~\cite{KSS16} and the present third paper are devoted to
the study of irreducibility and inequivalence of multiplicative
representations as \emph{representations of} $\G$.

It turns out that the growth of matrix coefficients plays a central
role both for the classification above conjectured and for the proof of
irreducibility.

In~1979 Haagerup~\cite{H} showed that tempered representations can be
characterized by the growth of their matrix coefficients, namely he
proved that for a representation~$\pi$ having a cyclic vector~$v$ the
following conditions are equivalent:
\begin{itemize}
\item[$\bullet$] $\pi$ is tempered;
\item[$\bullet$] The map $\phi_\eps^v(x)=\langle v,\pi(x)v\rangle e^{-\eps|x|}$
is square integrable for every positive $\eps$;
\item[$\bullet$]
  \begin{equation}\label{haagerup}
\displaystyle{
  \sum_{|x|=n} |\langle v, \pi(x)v\rangle|^2\leq  (n+1)^2\|v\|^4.}
\end{equation}
\end{itemize}
The  third condition implies
\begin{equation}\label{haag}
\|\phi_\eps^v\|_2^2=
\sum_{x\in\Gamma} |\langle v,\pi(x)v\rangle|^2e^{-2\eps|x|}
\leq C\|v\|^4 \left(\frac1{\eps}\right)^3
\period
\end{equation}
The problem of finding the correct asymptotic for $\sum_{|x|=n}
|\langle v, \pi(x)v\rangle|^2$ is nontrivial and has been treated by
many authors, not only for a free group: see for example \cite{BG19},
\cite{BLP}, \cite{BP}, \cite{FTP1}, \cite{FTS}, \cite{KSS16}.

In \cite{KSS16} it is shown if $\pi$~is a multiplicative
representation then $\|\phi_\eps^v\|_2^2$ can be explicitly calculated
for $v$~in a dense set of \dfn{smooth vectors}. It turns out to be
asymptotically proportional to either $1/\eps$ or $1/\eps^2$ or
$1/\eps^3$ as~$\eps\to 0$.

The exponent~$3$ for~$1/\eps$ in Haagerup's inequality \eqref{haag} is
an upper bound for the growth of the $\ell^2$~norm of~$\phi_\eps^v$
which is attained only in rather special cases: endpoint
representations of the isotropic/an\-iso\-tropic principal series of
Fig\`a-Talamanca and Picardello/Fig\`a-Ta\-la\-man\-ca and Steger
\cite{FTP1}, \cite{FTS} have these maximal asymptotics; likewise for
other multiplicative representations constructed from very special
matrix systems (see \cite{KSS16}).  Recently Boyer and Garncarek
\cite{BG19} constructed a huge family of irreducible representations
with maximal asymptotics.

Every multiplicative representation provides a perfect boundary
realization of itself (see Proposition \ref{perfect-itself}), but what
happens when we restrict this representation to~$\G$? Is this
representation still irreducible? Are there other boundary realization
of this $\G$-representation?

In \cite{KS01} we conjectured that any irreducible unitary
representation of~$\G$ weakly contained in the regular representation
is monotonous, or duplicitous, or odd.

In \cite{KSS16} we characterized, within the class of multiplicative
representations, those which satisfy monotony: they are exactly those
for which either $\|\phi_\eps^v\|_2^2\simeq 1/\eps^2$ or
$\|\phi_\eps^v\|_2^2\simeq 1/\eps^3,$ as $\eps\rightarrow 0.$ This
paper is devoted to the study of the case
$$\|\phi_\eps^v\|_2^2\simeq 1/\eps\,.$$ We shall prove that in this
case there are only two possibilities:
\begin{itemize}
\item[$\bullet$]  Either the multiplicative $\G$-representation is
  irreducible and satisfies duplicity (Theorem \ref{irre});
\item[$\bullet$] or the multiplicative $\G$-representation decomposes into
  two \emph{twin} irreducible $\G$-representations and each of them
  satisfies oddity (Theorem \ref{irre-odd}).
\end{itemize}
We may conclude that our conjecture is true for the irreducible
components of multiplicative $\G$-representations.

The techniques used here are completely different from those of
\cite{KSS16}: we shall use the Duplicity and Oddity Theorems of
\cite{HKS19} and new investigations into the eigenspace of~$1$ of the
transition matrix used to compute $\|\phi_\eps^v\|_2^2$.

\section{Preliminary}

$\G$ will stand for a non-abelian free group on a finite  set $A^+$  of generators.
We also let $A=A^+\cup A^-$ for the set of generators and their inverses.
Recall that the Cayley graph of~$\Gamma$ with respect
to~$A$ is a tree, and that this tree has a standard compactification
which is obtained by adjoining a boundary, which we
denote~$\Om$. This boundary can be described as the space of ends of
the tree; it also coincides with the boundary of~$\Gamma$ considered
as a Gromov hyperbolic group.

 Concretely, if we identify~$\Gamma$ with
the set of finite reduced words
$$
  \{ a_1a_2\dots a_n \st a_j\in A, a_ja_{j+1}\neq 1\},$$
then we can identify~$\Om$ with the set of infinite reduced words
$$
  \{ a_1a_2a_3\dots  \st a_j\in A, a_ja_{j+1}\neq 1\}.$$

For $x\in\Gamma$ let~$\Gamma(x)$ be the set of finite reduced words
which start with the reduced word for~$x$; let~$\Om(x)$ be the set of
infinite reduced words which start with the reduced word for~$x$. A
basis for the topology on the compactification $\Gamma\sqcup\Om$ is
given by the singletons $\{x\}$ and the sets $\Gamma(x)\sqcup\Om(x)$,
as $x$~varies through~$\Gamma$.  The left-action of~$\Gamma$ on
$\Gamma$ extends to a continuous action on the compactification.

For any directed edge $(x,xa)$ of the tree define
$$
\Gamma(x,xa)=\{ y\in\Gamma \st d(y,xa)<d(y,x) \},
$$
where $d$ counts the number of the edges
joining two vertices.
Upon removing that edge, the tree decomposes as
$\Gamma=\Gamma(x,xa)\amalg\Gamma(xa,x)$.

 Suppose that the length $|xa|=|x|+1$,
that is, suppose that~$a$ is the last letter in the reduced word
for~$xa$.  Then $\Gamma(x,xa)=\Gamma(xa)$ (while if $|xa|=|x|-1$, then
$\Gamma(x,xa)=\Gamma\sim\Gamma(x)$).  If $|y|<|xa|=|x|+1$, then also
$|yxa|=|yx|+1$, and so
$$
\begin{aligned}
y\Gamma(xa)&= y\Gamma(x,xa)=\Gamma(yx,yxa)=\Gamma(yxa) \\
y\Omega(xa)&=y(\overline{\Gamma(xa)}\cap\Omega)=\overline{y\Gamma(xa)}\cap\Omega
  =\overline{\Gamma(yxa)}\cap\Omega
  =\Omega(yxa)
\period
\end{aligned}
$$

Considering, by contrast, the case $y=(xa)^{-1}$, one finds
$$(xa)^{-1}\Gamma(xa)=(xa)^{-1}\Gamma(x,xa)
=\Gamma(a^{-1},e)=\Gamma\sim\Gamma(a^{-1}).$$

In order to construct a multiplicative representation as
described in \cite{KS04} one needs:

$\bullet\;\;$
  A matrix system $(V_a, H_{ba})$: it consists of
finite dimensional complex vector spaces $V_a,$ for each $a\in A,$ and linear maps
$H_{ba}:V_a\rightarrow V_b$ for each pair $a,b\in A,$ where $H_{ba}=0$
whenever $ba=e.$

$\bullet\;\;$
 A collection of positive definite sesquilinear forms $(B_a)$ on each $V_a$
  satisfying, for each $a\in A$ and  $v_a\in V_a$ the following \emph{compatibility condition:}
  \begin{equation}\label{compatibility}
B_a(v_a,v_a)=\sum_{b\in A}{B_b( H_{ba}v_a, H_{ba}v_a) }.
  \end{equation}

$\bullet\;\;$ A space $\mcH^\infty$ of \emph{multiplicative functions}
  obtained as follows:
  for $x\in\Gamma$ and
$a\in A$ let
  \begin{equation}\label{mu}
    \left\{
\begin{array}{ll}
 & \mu[x,x a, v_a](y)=0,\quad \mbox{for $y\notin\Gamma(x,xa)$},\\
&\mu[x,x a, v_a](xa)=v_a,\\
  &\mu[x,x a, v_a](ybc)=H_{cb}\, \mu[x,x a, v_a](yb),\\
  &\mbox{if $yb,ybc\in\Gamma(x,xa),$ and
$d(ybc,x)=d(y,x)+2.$}
\end{array}\right.
  \end{equation}

  Thus if $ab_1\dots b_n$ is reduced
\begin{equation*}
\mu[x,{xa},{v_a}](xab_1\dots b_n)=
 H_{b_n b_{n-1}}\dots H_{b_2b_1}H_{b_1a}v_a.
\end{equation*}

In this definition we do not require that $|xa|=|x|+1$.
For example, if $x=a_1\dots a_n$ is the reduced word for $x$ and
$a=\inv{a_n}$, then $e\in\G(x,xa)$ and
$$
\mu[x,x\inv{a_n},v_{\inv{a_n}}](e)= H_{\inv{a_1}\inv{a_2}}\dots
H_{\inv{a_{n-1}}\inv{a_n}}v_{\inv{a_n}}\,.
$$

Note that $y\G(x,xa)=\G(yx,yxa)$ and that
$$
\mu[x,xa,v_a](\inv y\cdot)=\mu[yx,yxa,v_a](\cdot)\,.
$$

A \dfn{multiplicative function} is a finite linear combination of such functions $\mu[x,xa,v_a]$
as $x$, $a$ and $v_a$ vary in $\G$, $A$ and $V_a$. Say that two multiplicative
functions are equivalent if they differ only on a finite set.
$\mcH^\infty$ is the quotient space of the space of multiplicative functions
with respect to this equivalence relation.

$\bullet\;\;$ An inner product $\langle \cdot,\cdot \rangle  $ on $\mcH^\infty.$
Define
\begin{equation}\label{inner}
  \begin{aligned}
    &\langle\mu[x,x a, v_a],\mu[y,yb, v_b]\rangle
    =0,\quad\mbox{if $\G(x,xa)\cap\G(y,yb)=\emptyset$},\\
    &\langle\mu[x,xa,v_a],\mu[x,xa,v'_a]\rangle
   =B_a(v_a,v'_a).
    \end{aligned}
\end{equation}
The compatibility condition \eqref{compatibility} ensure that $\langle\cdot,\cdot\rangle  $
is well defined.

Since every $f\in\mcH^\infty$  for $N$ large enough, and modulo the
equivalence relation, can be written as
\begin{equation}\label{effe}
f=\sum_{|x|=N}\sum_{\atopn{a\in A}{|xa|=N+1}}\mu[x,xa,f(xa)]
\end{equation}
$\langle\cdot\,,\cdot\rangle  $
extends by linearity to a scalar product in $\mcH^\infty$ (see \cite{KS04} for details).

Given all these ingredients one sets, for every $y\in\G$ and $f\in\mcH^\infty,$

  \begin{equation*}
    (\pi(y)f)(x)=f(\inv y x).
  \end{equation*}

 It can be shown that $\pi$ is unitary with respect to $\langle~\cdot,\cdot~\rangle$
and hence extends to $\mcH$, the completion of $\mcH^\infty$,
  to a unitary representation that we shall call \emph{multiplicative}.
We aware the reader that
the multiplicative
representation hitherto constructed need not be \emph{irreducible}, as it
is pointed out in \cite{IKS}. In order to hope for an irreducible representation of $\G$
we need to impose the following irreducibility condition on the matrix system:
\begin{defn}
An invariant subsystem of $(V_a, H_{ba})$ is a collection of subspaces
$W_a\subseteq V_a$ such that $H_{ba}(W_a)\subseteq W_b,$ for all $a,b\in A.$
The system $(V_a, H_{ba})$ is called \dfn{irreducible} if it is non-zero and there are no
invariant subsystems except for itself and the zero subsystem.
\end{defn}
In \cite{KS04} it is proved that any irreducible matrix system admits,
up to a normalization, a unique (up to scalar multiples) tuple of
strictly positive definite forms $(B_a)$ satisfying
\eqref{compatibility}.


From this point on we shall assume that {all systems are irreducible and normalized} so that
\eqref{compatibility} holds for a given
tuple $(B_a)$ of positive definite forms.

For brevity we shall call such  systems
 $(V_a, H_{ba}, B_a)$
 \emph{matrix systems with inner product}.

Now we need to specify in which sense a multiplicative representation
gives rise to a $\G\ltimes C(\Om)$ representation.
\begin{prop}\label{perfect-itself}
  Let $(\pi,\mcH)$ be a multiplicative representation   as described above.
  Then $(\operatorname{Id},\mcH)$ is a perfect boundary realization
  of $\pi$.
\end{prop}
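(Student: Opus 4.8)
The plan is to observe that, because $\Id$ is by construction a surjective linear isometry intertwining $\pi$ with itself, the isometry, cyclicity, and perfectness requirements are all automatic the moment we exhibit an action of $C(\Om)$ on $\mcH$ which, together with $\pi$, assembles into a representation of $\cp$. Thus the entire content of the proposition is the construction of a covariant $C(\Om)$-action on $\mcH$ realizing pointwise multiplication on the boundary, and the verification that it is a $*$-representation satisfying the covariance relation $\pi(y)M_\phi\pi(\inv y)=M_{y\cdot\phi}$, where $(y\cdot\phi)(\xi)=\phi(\inv y\xi)$.

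First I would define the operators $M_\phi$ on the dense subspace $\mcH^\infty$. Since $\Om$ is a Cantor set, the clopen sets $\Om(x,xa):=\overline{\G(x,xa)}\cap\Om$ form a basis of its topology and their indicators $\1_{\Om(x,xa)}$ span a dense $*$-subalgebra of $C(\Om)$. For such an indicator I would let $M_{\1_{\Om(x,xa)}}$ act by truncation: writing a given $f\in\mcH^\infty$ in the form \eqref{effe} for $N$ large enough that each $\Om(z,za)$ appearing is either contained in or disjoint from $\Om(x,xa)$, set $M_{\1_{\Om(x,xa)}}f$ equal to the sum of those terms $\mu[z,za,f(za)]$ with $\Om(z,za)\subseteq\Om(x,xa)$, discarding the rest. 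Well-definedness (independence of $N$ and of the representative modulo finite sets) follows from the recursive multiplicative structure \eqref{mu}.

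Next I would check that $\phi\mapsto M_\phi$ is a $*$-representation. The key point is that each $M_{\1_{\Om(x,xa)}}$ is an orthogonal projection: the splitting of $f$ into its $\Om(x,xa)$-part and the complementary part is a decomposition into summands with disjoint supports, which are orthogonal by the first line of \eqref{inner}, so $\norm{f}^2=\norm{M_{\1_{\Om(x,xa)}}f}^2+\norm{(\Id-M_{\1_{\Om(x,xa)}})f}^2$. Multiplicativity $M_{\1_E}M_{\1_F}=M_{\1_{E\cap F}}$ is immediate from the truncation description. Hence for a step function $\phi=\sum_ic_i\1_{E_i}$ with the $E_i$ disjoint clopen one gets $\norm{M_\phi}=\max_i\mi{c_i}=\norm{\phi}_\infty$, so $\phi\mapsto M_\phi$ is isometric on the dense subalgebra and extends to a bounded $*$-representation of $C(\Om)$ acting on the completion $\mcH$.

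Finally, covariance is checked on the generators. Using $\pi(y)\mu[x,xa,v_a]=\mu[yx,yxa,v_a]$ together with $y\G(x,xa)=\G(yx,yxa)$, hence $y\Om(x,xa)=\Om(yx,yxa)$, one sees that conjugating the truncation operator by $\pi(y)$ shifts the truncating region from $\Om(x',x'a')$ to $y\Om(x',x'a')=\Om(yx',yx'a')$, which is exactly $M_{y\cdot\1_{\Om(x',x'a')}}$; the relation then extends to all of $C(\Om)$ by continuity. I expect the main obstacle to be precisely the verification that $M$ is well defined and multiplicative on $\mcH^\infty$ — that is, that the truncation respects the equivalence relation and the inner product \eqref{inner} (which in turn rests on the compatibility condition \eqref{compatibility}) — after which boundedness, extension to $\mcH$, and the covariance relation are comparatively routine.
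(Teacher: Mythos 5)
Your proposal is correct and follows essentially the same route as the paper: both define the $C(\Om)$-action by truncating multiplicative functions over the basic clopen sets of~$\Om$, verify the covariance relation $\pi(y)M_\phi\pi(y)^{-1}=M_{y\cdot\phi}$, and note that surjectivity and cyclicity of $\Id$ are automatic. The paper's proof is terser (it simply asserts the extension by linearity and continuity and the crossed-product compatibility), whereas you spell out the projection, multiplicativity, and norm estimates; the only nitpick is that you need only contractivity, not exact isometry, of $\phi\mapsto M_\phi$ on the dense subalgebra to extend it to $C(\Om)$.
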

\begin{proof}
Let $f\in\mcH$, $x\in\G$ and let $\one_x$ be the characteristic function of the set
$\Om(x)$. Set
\begin{equation*}
  (\pi(\one_x)f)(y)=\left\{\begin{aligned}
      &f(y),\quad\mbox{if $y\in\G(x)$},\\
      &0,\quad\mbox{otherwise},
    \end{aligned}\right.
\end{equation*}
and extend this action by linearity and continuity to all of $C(\Om)$.

Observe that one has
\begin{equation}\label{bdry-rpn}
  \pi(x)\pi(G)\pi(x)^{-1} = \pi(\lambda(x)G),
    \qquad\text{for $x\in\Gamma$ and $G\in C(\Om)$.}
\end{equation}
where
$\lambda:\Gamma\to\Aut(C(\Om))$ is given by
$ (\lambda(x)G)(\omega)=G(x^{-1}\omega)$.
In fact, a pair of actions which satisfy~\eqref{bdry-rpn} fit together
to give a representation of the
\emph{crossed-product $C^*$-algebra}, denoted~$\cp$.

Vice versa, any
$\cp$-representation comes from a pair of actions which fit together
as per~\eqref{bdry-rpn}.  One can consult \cite{Davidson} for the
definition of the crossed-product.
Hence  $(\operatorname{Id},\mcH)$ is a boundary realization of $\pi$
which is obviously surjective.
\end{proof}


Hence any multiplicative representation provides a perfect boundary
realization of itself.
Suppose $\pi$~is \emph{irreducible} and weakly contained in the
regular representation. How many different boundary realizations does
it have?  To make this question precise, we need
\begin{defn}\label{equiv-cp}
  Let $\pi:\Gamma\to\mcU(\mcH)$ be a unitary representation
  of~$\Gamma$. Two boundary realizations $\iota_j:\mcH\to\mcH_j'$ are
  \emph{equivalent} if there is a unitary equivalence of
  $\cp$-representations $J:\mcH_1'\to\mcH_2'$, such that
  $J\iota_1=\iota_2$.
\end{defn}
The above definition concerns equivalence of $\cp$ representations, while the
next definition concerns  the \emph{equivalence of matrix systems:}
\begin{defn}\label{equiv-sys}
A \emph{map } from the system
   $(V_a, H_{ba})$ to
$(V_a^{\sharp}, H_{ba}^{\sharp}),$  is a tuple $(J_a),$ where
$J_a:V_a\rightarrow V_a^{\sharp},$ is a linear map and
$$
H_{ba}^{\sharp} J_a=J_b H_{ba} \quad\mbox{for all $a, b\in A$}\,.
$$
The tuple is called an \emph{equivalence} if each $J_a$ is a bijection.
Two systems are called \emph{equivalent } if there is an equivalence between
them.
\end{defn}

In \cite{KS04} it is proved that
two multiplicative representations arising from irreducible matrix systems
are equivalent as $\cp$ representations  if and only if the two systems
are equivalent; hence for irreducible $\cp$ representations the two definitions
\ref{equiv-cp} and \ref{equiv-sys} are
equivalent.
Obviously equivalent systems give raise to equivalent $\G$-representations,
but the converse is no longer true as we shall see in Section \ref{equivG}.

\section{Good Vectors and Boundary Realizations.}

We begin with the following important
\begin{defn}
Given a representation
$(\pi,\mcH)$, we say that a non-zero vector $w\in \mathcal{H}$
is a \emph{good vector} if it
satisfies the \emph{Good Vector Bound}, namely  if
there exists a constant $C$, depending only on $w$, such that
\begin{equation}\label{GVBo}
\sum_{|x|=n}{|\langle v,\pi(x)w\rangle  |^2}\leq C\| v\|^2,\quad
\text{for all }\; n\in\N, v\in \mathcal{H}.\tag{GVB}
\end{equation}
\end{defn}
We observe that, if $w$ is a good vector, then, for every $v,$
as $\eps\rightarrow 0,$
$$\sum_{x\in\G}{|\langle v,\pi(x)w\rangle|^2}e^{-\eps|x|}=
\sum_{n=0}^\infty \displaystyle\sum_{|x|=n}{|\langle v,\pi(x)w\rangle  |^2}e^{-\eps n}
\leq C\frac{\| v\|^2}{1-e^{-\eps}}\simeq\frac1\eps.
$$

The first key observation is that
the existence of ``good vectors'' is deeply related to the existence
of imperfect boundary realizations, as  the following proposition shows:
\begin{prop}\label{impimp}\cite{KS01}
If a representation $(\pi,\mathcal{H})$ of $\G$ admits an \emph{imperfect}
boundary realization, then some non-zero vector $w\in \mathcal{H}$ satisfies
\eqref{GVBo}.
\end{prop}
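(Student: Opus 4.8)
The plan is to turn the imperfection of $\iota$ into a single vector whose $\G$–translates become mutually orthogonal once they are relocalized on the boundary by the projections $\pi'(\one_x)$. Two structural facts drive the argument. First, each $\Om(x)$ is clopen, so $\one_x\in C(\Om)$, and for fixed $n$ the sets $\{\Om(x)\}_{|x|=n}$ partition $\Om$; hence $\{\pi'(\one_x)\}_{|x|=n}$ is a family of mutually orthogonal projections summing to the identity of $\mcH'$. Second, writing $P$ for the orthogonal projection of $\mcH'$ onto $\iota(\mcH)$, imperfection means $\mcK:=\iota(\mcH)^{\perp}\neq0$; since $\iota$ is $\G$–equivariant, both $\iota(\mcH)$ and $\mcK$ are $\G$–invariant, so $P$ commutes with every $\pi'(x)$.

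First I would produce the candidate vector. Choose $0\neq\xi\in\mcK$. Finite linear combinations of the indicators $\one_x$ are dense in $C(\Om)$ (the $\Om(x)$ form a basis of clopen sets and $\Om$ is totally disconnected), so the $C(\Om)$–cyclicity of $\iota(\mcH)$ forces $\langle\xi,\pi'(\one_{x_0})\iota(v_0)\rangle\neq0$ for some $x_0\in\G$ and $v_0\in\mcH$; necessarily $x_0\neq e$, since $\one_e=1$ and $\xi\perp\iota(\mcH)$. As $\one_{x_0}$ is self-adjoint this says $P\pi'(\one_{x_0})\xi\neq0$, so I set $w:=\inv{\iota}\bigl(P\pi'(\one_{x_0})\xi\bigr)$, a non-zero vector of $\mcH$ with $\iota(w)=P\pi'(\one_{x_0})\xi$. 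Here the role of imperfection is essential: it is precisely $\mcK\neq0$ that supplies the vector $\xi$ from which $w$ is built.

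The heart of the verification is one computation. Using $\iota\pi(x)=\pi'(x)\iota$, the commutation $P\pi'(x)=\pi'(x)P$, the identity $P\iota(v)=\iota(v)$, and the covariance relation \eqref{bdry-rpn} in the form $\pi'(x)\pi'(\one_{x_0})=\pi'(\one_{z})\pi'(x)$, where $z=z(x)$ is the reduced word representing $xx_0$, I would obtain
\[
\langle v,\pi(x)w\rangle=\langle\iota(v),\pi'(x)\pi'(\one_{x_0})\xi\rangle=\langle\pi'(\one_{z})\iota(v),\pi'(x)\xi\rangle.
\]
Cauchy–Schwarz and the unitarity of $\pi'(x)$ then give $|\langle v,\pi(x)w\rangle|\le\|\xi\|\,\|\pi'(\one_{z})\iota(v)\|$, so that \eqref{GVBo} will follow once $\sum_{|x|=n}\|\pi'(\one_{z(x)})\iota(v)\|^{2}$ is bounded by a multiple of $\|v\|^{2}$ independent of $n$.

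The main obstacle is exactly this last sum. The map $x\mapsto z(x)$ is injective — one recovers $x$ as the reduced word for $z\inv{x_0}$ — but when $xx_0$ is not reduced the words $z(x)$ have different lengths, so the sets $\Om(z(x))$ may nest instead of being disjoint and the projections $\pi'(\one_{z(x)})$ need no longer be orthogonal. I would resolve this by grouping the sphere $\{|x|=n\}$ according to the amount $k$ of cancellation in $xx_0$, which ranges only over $0\le k\le|x_0|$. Within a fixed $k$ all the words $z(x)$ share the common length $n+|x_0|-2k$ and are distinct, so the corresponding $\Om(z(x))$ are disjoint and $\sum_{k\text{-level}}\|\pi'(\one_{z(x)})\iota(v)\|^{2}\le\|v\|^{2}$. (The sole element $x=\inv{x_0}$, which occurs only when $n=|x_0|$, makes $\lambda(x)\one_{x_0}$ equal to $1-\one_{\inv c}$, with $c$ the last letter of $x_0$, rather than a single $\one_z$; it contributes one term $\le\|\xi\|^{2}\|v\|^{2}$, harmlessly absorbed.) Summing over the at most $|x_0|+1$ cancellation levels yields $\sum_{|x|=n}|\langle v,\pi(x)w\rangle|^{2}\le(|x_0|+2)\,\|\xi\|^{2}\,\|v\|^{2}$, a constant independent of $n$ and $v$, which is \eqref{GVBo}.
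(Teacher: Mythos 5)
The paper gives no proof of this proposition---it is quoted from \cite{KS01}---so your argument can only be judged on its own terms. Your overall strategy is the right one: extract $\xi\perp\iota(\mcH)$ from imperfection, set $\iota(w)=P\pi'(\one_{x_0})\xi$, and control $\sum_{|x|=n}|\langle v,\pi(x)w\rangle|^2$ by moving the projection onto $\iota(v)$ via the covariance relation and exploiting disjointness of cylinder sets. Everything up to and including the identity $\langle v,\pi(x)w\rangle=\langle\pi'(\lambda(x)\one_{x_0})\iota(v),\pi'(x)\xi\rangle$ is correct, as is the treatment of the cancellation levels $k<|x_0|$.

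The gap is in the full-cancellation class. You assert that the only $x$ for which $\lambda(x)\one_{x_0}$ fails to be a single indicator $\one_{z(x)}$ is $x=\inv{x_0}$. That is false: $\lambda(x)\one_{x_0}=\one_{x\Om(x_0)}$ equals $\one_{z(x)}$ exactly when the cancellation in $x\cdot x_0$ is strictly less than $|x_0|$. Whenever $x=z\,\inv{x_0}$ with reduced concatenation (i.e.\ $k=|x_0|$), one has $x\Om(x_0)=\Om\setminus\Om(zc^{-1})$, with $c$ the last letter of $x_0$---a co-cylinder, not a cylinder---and on the sphere $|x|=n$ there are exponentially many such $x$, not one. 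For these the operators $\pi'(\one_{x\Om(x_0)})$ overlap massively, so your level-$k$ estimate $\sum\|\pi'(\one_{z(x)})\iota(v)\|^2\le\|v\|^2$ does not apply, and bounding each term separately by $\|\xi\|^2\|v\|^2$ gives an exponentially growing contribution. The repair uses a piece of structure you set up but never exploit: $\iota(\mcH)$ is $\G$-invariant, so $\pi'(x)\xi\perp\iota(\mcH)$, hence $\langle\iota(v),\pi'(x)\xi\rangle=0$ and therefore
\[
\bigl\langle\pi'(1-\one_{zc^{-1}})\iota(v),\pi'(x)\xi\bigr\rangle
=-\bigl\langle\pi'(\one_{zc^{-1}})\iota(v),\pi'(x)\xi\bigr\rangle;
\]
the words $zc^{-1}$ are distinct reduced words of the common length $n-|x_0|+1$, so disjointness---and the bound $\|\xi\|^2\|v\|^2$ for this level---is restored. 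Without this step the proof as written fails.
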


For an arbitrary representation it is really very hard to understand whether
there exists or not a good vector! See for example the paper of
Boyer and Garncareck \cite{BG19} where the existence of good vectors is ruled out!

For a representation constructed from an irreducible matrix system there is the possibility to calculate
\begin{equation}\label{somme-generic}
  \sum_{x\in\G}|\langle v,\pi(x)w\rangle|^2  e^{-\eps|x|}
\end{equation}
explicitly
at least for a dense set of vectors $v$ and $w$.
The calculations in the following subsection are taken from \cite{KSS16}.

\subsection{The Twin of a System}

Given a finite dimensional vector space $V$, $\overline{V}$ will stand for
its complex conjugate, $V'$ for its dual, the space of linear functionals,
while
 $V^{\ast}=\overline{V}'$ will stand for the space of antilinear functionals.
We recall some identifications that will be used  in this paper.
$V_1\otimes V_2$ will be identified with the space of linear maps
$v_1\otimes v_2:V_2'\rightarrow V_1$ given by
$
  (v_1\otimes v_2)(f)=f(v_2)\, v_1=\,\langle v_2,f\rangle\, v_1\;.
$

It follows that, given $T_1\in\mathscr{L}(V_1,V_3)$ and
$T_2\in\mathscr{L}(V_2,V_4)$, the map $$T_1\otimes T_2:V_1\otimes V_2\rightarrow V_3\otimes V_4$$
corresponds to the operator
$$
 \mathscr{L}(V_2',V_1)\rightarrow \mathscr{L}(V_4',V_3),
\quad S\mapsto T_1\,S\,T_2'.
$$
So we shall write $(T_1\otimes T_2)S=T_1S T_2'.$ The duality isomorphism
$${\mathcal L}:\mathscr{L}(V,V^{\ast})\rightarrow\mathscr{L}(V^{\ast},V)'$$
defines a bilinear form which can be written explicitly by means of the trace function
$$\mathcal{B}: \mathscr{L}(V,V^{\ast})\times\mathscr{L}(V^{\ast},V)\rightarrow \C,$$
\begin{equation}\label{otto}
\mathcal{B}(T,S):=({\mathcal L}(T))(S)=\mathrm{tr}(TS)=\mathrm{tr}(ST).
\end{equation}

Positive definite sesquilinear forms $B_a$
on $V_a$ are identified with
maps $B_a \in\mathscr{L}(V_a,V_a^{\ast})$;  under this identification one also has $B^*_a=B_a$.

The compatibility condition \eqref{compatibility} may be rewritten as:
\begin{equation}\label{ti}
(TB)_a=
\sum_{b\in A}H^\ast_{ba}B_bH_{ba}=\sum_{b\in A} H^\ast_{ba}\otimes H_{ba}'B_b=B_a.
\end{equation}

The above equation says that the tuple $(B_a)$
is a right eigenvector for
the matrix $T=(H^\ast_{ba}\otimes H_{ba}')_{a,b}$
corresponding to eigenvalue $1$.

For every $a\in A$ set
$
\widehat{V}_a:=V^{\ast}_{a^{-1}}=\overline{V}_{a^{-1}}'.
$

A system of linear maps  $H_{ba}:V_a\to V_b$ induces an obvious system of
maps $H^{\ast}_{ba}:V_b^{\ast}\rightarrow V_a^{\ast},$
by the rule
$H^{\ast}_{ba}(f)=f\circ H_{ba},$ and also maps
$$
\widehat{H}_{ba}:=H^{\ast}_{a^{-1}b^{-1}}:\widehat{V}_a\rightarrow\widehat{V}_b.
$$

Hence the matrix system $(V_a, H_{ba})$ induces
another matrix system $(\widehat{V}_a, \widehat{H}_{ba}),$ which is irreducible if
so is $(V_a, H_{ba}).$

\begin{prop}\cite{KSS16}
Assume that  $(V_a, H_{ba}, B_a)$ is a matrix  system
with inner product. Then there exists a unique (up to multiple
scalars) positive definite tuple $(\widehat{B}_a),$
$\widehat{B}_a:\widehat{V}_a\rightarrow\widehat{V}_a^{\ast}$
on $\widehat{V}_a$
such that the matrix system
$(\widehat{V}_a,\widehat{H}_{ba},\widehat{B}_a)$ is
a system with inner products.
\end{prop}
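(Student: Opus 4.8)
The plan is to obtain both existence and uniqueness at one stroke, by applying the uniqueness theorem of \cite{KS04} quoted above not to $(V_a,H_{ba})$ itself but to its twin $(\widehat{V}_a,\widehat{H}_{ba})$. The whole content is the observation that ``$(\widehat{V}_a,\widehat{H}_{ba},\widehat{B}_a)$ is a system with inner products'' means exactly that $(\widehat{B}_a)$ is a strictly positive definite tuple satisfying the compatibility condition \eqref{compatibility} \emph{for the twin}; so once we know the twin is an irreducible matrix system, the quoted theorem supplies such a tuple and guarantees it is unique up to a positive scalar.

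Concretely, I would first check that $(\widehat{V}_a,\widehat{H}_{ba})$ is a bona fide matrix system: the spaces $\widehat{V}_a=V_{a^{-1}}^{\ast}$ are finite dimensional, the maps $\widehat{H}_{ba}=H^{\ast}_{a^{-1}b^{-1}}\colon\widehat{V}_a\to\widehat{V}_b$ are linear, and $\widehat{H}_{ba}=0$ precisely when $a^{-1}b^{-1}=e$, i.e.\ when $ba=e$, which is the required vanishing. Irreducibility of the twin is inherited from $(V_a,H_{ba})$, as recorded just before the statement, and $(V_a,H_{ba})$ is irreducible by our standing assumption. Finally, ``being a system with inner products'' for the twin is the identity $\widehat{B}_a=\sum_{b\in A}\widehat{H}^{\ast}_{ba}\,\widehat{B}_b\,\widehat{H}_{ba}$, which is \eqref{compatibility} (equivalently \eqref{ti}) transcribed for $(\widehat{V}_a,\widehat{H}_{ba})$. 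The theorem of \cite{KS04}, applied to this irreducible system, then produces a strictly positive definite tuple $(\widehat{B}_a)$ with $\widehat{B}_a\colon\widehat{V}_a\to\widehat{V}_a^{\ast}$, unique up to scalar multiples, which is exactly what is claimed.

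The step to handle with care --- and the only place one could go wrong --- is the temptation to exhibit $(\widehat{B}_a)$ explicitly in terms of $(B_a)$. The tidiest guess, $\widehat{B}_a=B_{a^{-1}}^{-1}$ read through the canonical identification $\widehat{V}_a^{\ast}\cong V_{a^{-1}}$, does \emph{not} satisfy the twin compatibility in general: substituting $\widehat{H}_{ba}=H^{\ast}_{a^{-1}b^{-1}}$ and $\widehat{H}^{\ast}_{ba}=H_{a^{-1}b^{-1}}$ and reindexing by $c=b^{-1}$, $d=a^{-1}$ turns the desired relation into $\sum_{c\in A}H_{dc}\,B_c^{-1}\,H_{dc}^{\ast}=B_d^{-1}$. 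Here the fixed index $d$ is the \emph{target} of the maps $H_{dc}$ and the inverse forms $B_c^{-1}$ appear, whereas the hypothesis \eqref{ti} reads $\sum_{b\in A}H^{\ast}_{ba}\,B_b\,H_{ba}=B_a$, with the fixed index $a$ the \emph{source} of the maps and the forms $B_b$ themselves. The two conditions are genuinely different, so there is no free explicit formula; the actual content of the proposition is the transfer of the Perron-type uniqueness of \cite{KS04} from a system to its twin, and the abstract route through irreducibility is both the shortest and the complete one. Should an explicit $(\widehat{B}_a)$ be wanted later, it would instead have to be recovered --- up to the index flip $a\mapsto a^{-1}$, and via the trace pairing \eqref{otto} --- as the positive left eigenvector for eigenvalue $1$ of the transition matrix $T=(H^{\ast}_{ba}\otimes H'_{ba})_{a,b}$, rather than from $(B_a)$ by inversion.
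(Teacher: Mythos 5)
The paper does not prove this proposition itself---it is quoted from \cite{KSS16}---so your argument can only be judged against the natural proof, and the route you take (check that the twin is an irreducible matrix system, then invoke the Perron-type existence and uniqueness theorem of \cite{KS04} for that system) is indeed the right one. Your side remark that the naive guess $\w{B}_a=B_{a^{-1}}^{-1}$ fails the twin compatibility in general is also correct and worth making.

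There is, however, one step you cannot wave through. The theorem of \cite{KS04}, as quoted in the paper, produces a compatible positive tuple only \emph{up to a normalization of the system}: for a general irreducible system the unique positive tuple satisfies the compatibility identity with the Perron eigenvalue $\rho$ in place of $1$, and one must rescale the maps to force $\rho=1$. The proposition asserts more, namely that the twin, with the maps $\w{H}_{ba}=H^{\ast}_{a^{-1}b^{-1}}$ exactly as induced and with \emph{no further rescaling}, admits a tuple satisfying \eqref{compatibility} on the nose; equivalently, that the Perron eigenvalue of the twin's transition matrix is already $1$. Irreducibility alone does not give this, so as written your appeal to \cite{KS04} leaves a gap. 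The fix is short and is essentially hiding in your own last paragraph: unwinding $\w{H}_{ba}=H^{\ast}_{a^{-1}b^{-1}}$ and reindexing by $a\mapsto a^{-1}$, the twin compatibility condition for $C_a:=\w{B}_{a^{-1}}\in\mathscr{L}(V_a^{\ast},V_a)$ reads $\sum_{b}H_{ab}\,C_b\,H_{ab}^{\ast}=C_a$, which says exactly that $(C_a)$ is a fixed positive tuple of the adjoint, with respect to the trace pairing \eqref{otto}, of the transition map $T$ of \eqref{ti}. Since $T$ is an irreducible positive map with the strictly positive eigenvector $(B_a)$ for the eigenvalue $1$, Perron--Frobenius forces its spectral radius to be $1$; the adjoint map has the same spectral radius and preserves the dual cone, so it too has a strictly positive eigenvector for the eigenvalue $1$, unique up to a positive scalar. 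Promote that observation from an aside about ``explicit formulas'' to the main line of the argument and your proof is complete.
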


\begin{defn}
  The system $(\widehat{V}_a,\widehat{H}_{ba},\widehat{B}_a)$
  above constructed is called the \dfn{twin of the system} $(V_a,H_{ba},B_a).$
\end{defn}

The twin of the system will play a central role in the computation
of \eqref{somme-generic}.
Let $\mu[e,a,v_a]$ and $\mu[e,b,v_b]$ be elementary multiplicative functions
constructed from the system $(V_a,H_{ba},B_a)$ as per \eqref{mu} and let
$(\widehat{V}_a,\widehat{H}_{ba},\widehat{B}_a)$
be the twin system.
For any $a,b\in A,$ we define  maps $E_{ab}:V_b\rightarrow \widehat{V}_a$ by
$$
E_{ab}=\sum_{\atopn{c\in A}{ c\neq a, b^{-1}}}{{H}^{\ast}_{c a^{-1}}{B}_c{H}_{cb}}=
\sum_{\atopn{c\in A}{ c\neq a, b^{-1}}}{\widehat{H}_{a c^{-1}}{B}_c{H}_{cb}},
$$
where $E_{ab}=0$ whenever $ab=e$.

The quantity
$$
 \|\phi_\eps^{v_a,v_b}\|_2^2 = \sum_{x\in\Gamma}
{|\langle \mu[e,a,v_a],\pi(x)\mu[e,b,v_b]\rangle  |^2}
e^{-\eps|x|}
$$
can be calculated using the following block matrix:
$
{\mathcal D}=(D_{i,j})_{i,j=1,\dots, 4}$
\begin{equation}\label{matrix}
{\mathcal D}=\!\!\left(\begin{array}{cccc}
\left(\!{\widehat{H}}_{ab}\otimes \overline{{\widehat{H}}}_{ab}\!\right)_{a,b}
&\!\!\left(\!E_{ab}\otimes \overline{{\widehat{H}}}_{ab}\!\right)_{a,b}
&\!\!\!\left(\!{\widehat{H}}_{ab}\otimes \overline{{E}}_{ab}\!\right)_{a,b}
&\!\!\!\left(\!{E}_{ab}\otimes \overline{{E}}_{ab}\!\right)_{a,b}\\ \\
\!\!0
&\!\!\!\left(\!{H}_{ab}\otimes \overline{{\widehat{H}}}_{ab}\!\right)_{a,b}
&\!\!\!\!0
&\!\!\!\left(\!{H}_{ab}\otimes \overline{{E}}_{ab}\!\right)_{a,b}\\ \\
\!\!0
&\!\!\!\!0
&\!\!\!\left(\!{\widehat{H}}_{ab}\otimes \overline{{H}}_{ab}\!\right)_{a,b}
&\!\!\!\left(\!{E}_{ab}\otimes \overline{{H}}_{ab}\!\right)_{a,b}\\ \\
\!\!0
&\!\!\!0
&\!\!\!\!0
&\!\!\!\left(\!{H}_{ab}\otimes \overline{{H}}_{ab}\!\right)_{a,b}
\end{array}
\right)
\end{equation}
Indeed $\mcD$ is a transition matrix
which allows to pass from
$$\sum_{|x|=n}
{|\langle \mu[e,a,v_a],\pi(x)\mu[e,b,v_b]\rangle|^2}
$$
to the same quantity summed on all $|x|=n+1.$

It can be shown that, under our assumptions, the spectral radius of $\mcD$ is
always one, \cite{KS96}. Moreover
\begin{itemize}
\item[A)] If the two systems
  $(H_{ba}, V_a,B_a)$ and
  $(\widehat{V}_a,\widehat{H}_{ba},\widehat{B}_a)$
  are  {not equi\-val\-ent},
  $1$ is an eigenvalue of multiplicity {two};
\item[B)] If the two systems
$(H_{ba}, V_a,B_a)$ and
  $(\widehat{V}_a,\widehat{H}_{ba},\widehat{B}_a)$
  are {equi\-val\-ent}, $1$ is an
 eigenvalue of multiplicity {four}.
\end{itemize}

Hence the growth of the quantity
$\|\phi_\eps^{v_a,v_b}\|_2^2$
depends on the eigenspace of $1$ of $\mcD$.
This space can have, in general, dimension $1,$ $2,$ $3$ or $4,$
depending on many facts.

The study for inequivalent systems has been done in \cite{KSS16} and it is summarized in the
following Theorem:
\begin{thm}\label{pregressi}
If the two systems  $(V_a,H_{ba},B_a)$ and
 $(\widehat{V}_a,\widehat{H}_{ba},\widehat{B}_a)$
{are not equivalent}, then
\begin{itemize}
  \item[\textbf{AI}] The dimension of the eigenspace of $1$
is $2$ if and only if, for every $a,b,\in A$ and $v_a,v_b\in V_a,V_b$, one has
$$\displaystyle{\|\phi_\eps^{v_a,v_b}\|_2^2\simeq\frac1\eps},\quad
\text{as}\quad \eps\rightarrow 0.$$
%
In this case there exists a unique  tuple of linear maps
$Q_a:V_a\to\widehat{V}_a$ satisfying
\begin{equation}\label{imain}
  {\widehat{H}}_{ab}Q_b+ E_{ab}=Q_a H_{ab},\quad  {a,b\in A}.
\end{equation}
\item[\textbf{AII}]
The dimension of the eigenspace of $1$ is $1$ if and only if
 for every $a,b,\in A$ and $v_a,v_b\in V_a,V_b$, one has
 $$\displaystyle{\|\phi_\eps^{v_a,v_b}\|_2^2\simeq\frac1{\eps^2},}
 \quad
\text{as}\quad \eps\rightarrow 0.$$
%
In this case  {no vector in $\mcH$}
satisfies the good vector bound \eqref{GVBo}
and no system of $Q_a:V_a\to\widehat{V}_a$ can satisfy \eqref{imain}.
\end{itemize}
\end{thm}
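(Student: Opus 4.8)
The plan is to read the Jordan structure of $\mcD$ at the eigenvalue $1$ directly off its block upper–triangular shape in \eqref{matrix}, to match that structure against the two asymptotic regimes, and to identify the resulting Fredholm obstruction with the equation \eqref{imain}. I first locate the eigenvalue $1$. The four diagonal blocks are $D_{11}=(\widehat H_{ab}\otimes\overline{\widehat H}_{ab})$, $D_{22}=(H_{ab}\otimes\overline{\widehat H}_{ab})$, $D_{33}=(\widehat H_{ab}\otimes\overline H_{ab})$ and $D_{44}=(H_{ab}\otimes\overline H_{ab})$. Rewriting the compatibility relation \eqref{ti} shows that $D_{44}$ has eigenvalue $1$ with one–dimensional eigenspace spanned by the positive tuple built from $(B_a)$, and symmetrically $D_{11}$ has eigenvalue $1$ with one–dimensional eigenspace built from $(\widehat B_a)$; both are Perron eigenvalues of positivity–preserving operators, hence simple. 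Because the two systems are \emph{inequivalent}, the mixed blocks $D_{22}$ and $D_{33}$ cannot have $1$ in their spectrum: a fixed vector would unwind, through the identifications of \eqref{otto}, into a nonzero intertwiner of $(V_a,H_{ba})$ and $(\widehat V_a,\widehat H_{ba})$, which by irreducibility and Schur forces an equivalence. Thus $(I-D_{22})$ and $(I-D_{33})$ are invertible, and the algebraic multiplicity $2$ of the eigenvalue $1$ (fact A) comes exactly from the corners $D_{11}$ and $D_{44}$.

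Next I compute the geometric eigenspace by back–substitution up the triangle. Solving $\mcD\,w=w$ for $w=(w_1,w_2,w_3,w_4)$: the bottom row gives $w_4\in\ker(D_{44}-I)$; the middle rows then determine $w_2=(I-D_{22})^{-1}D_{24}w_4$ and $w_3=(I-D_{33})^{-1}D_{34}w_4$ uniquely; and the top row requires
$$(I-D_{11})\,w_1=D_{12}w_2+D_{13}w_3+D_{14}w_4=:\eta(w_4).$$
Taking $w_4=0$ always yields the eigenvector $(\xi,0,0,0)$ with $\xi$ spanning $\ker(D_{11}-I)$. Hence the eigenspace is two–dimensional precisely when, for $w_4$ spanning $\ker(D_{44}-I)$, the vector $\eta(w_4)$ lies in $\operatorname{im}(I-D_{11})$; otherwise it is one–dimensional and the eigenvalue $1$ carries a single $2\times2$ Jordan block. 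This is the dichotomy \textbf{AI}/\textbf{AII}; since in the inequivalent case the only options are geometric dimension $1$ or $2$, each implication needs to be proved in one direction only.

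The heart of the argument, and the step I expect to be hardest, is to identify this solvability condition with \eqref{imain}. The inhomogeneous term $\eta(w_4)$ is assembled entirely from the maps $E_{ab}$ — exactly the maps entering the off–diagonal blocks $D_{12},D_{13},D_{14},D_{24},D_{34}$ — while $E_{ab}$ is also the inhomogeneity of \eqref{imain}. Unwinding the tensor and trace identifications of \eqref{otto}, the back–substituted components $w_2,w_3$ repackage into a tuple $Q_a:V_a\to\widehat V_a$ (with $w_3$ the conjugate of $w_2$), and the top–row equation $(I-D_{11})w_1=\eta(w_4)$ becomes, after the same repackaging, precisely $\widehat H_{ab}Q_b+E_{ab}=Q_aH_{ab}$. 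Solvability of the top row is thereby equivalent to solvability of \eqref{imain}; by the Fredholm alternative the obstruction is the pairing of $\eta(w_4)$ with the left $1$–eigenvector of $D_{11}$, which is the twin tuple $(\widehat B_a)$ under \eqref{otto}. Matching each block of $\mcD$ against its corresponding term in \eqref{imain} is the main calculation.

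Finally I convert spectral data into asymptotics and into the statements on good vectors. Writing $S_n$ for the tuple assembling $\sum_{|x|=n}|\langle\mu[e,a,v_a],\pi(x)\mu[e,b,v_b]\rangle|^2$, the transition property of $\mcD$ gives $S_n=\mcD^{\,n}S_0$; components from eigenvalues of modulus $<1$ contribute $O(1)$ to $\sum_n S_n e^{-\eps n}$, so the eigenvalue $1$ dominates. In case \textbf{AI} the eigenvalue $1$ is semisimple, $S_n$ converges, and $\sum_n S_n e^{-\eps n}\simeq1/\eps$; in case \textbf{AII} the $2\times2$ Jordan block makes $\mcD^{\,n}$ grow linearly, so $S_n\simeq cn$ and the sum is $\simeq1/\eps^2$. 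Positivity — the $S_n$ have nonnegative entries and $S_0$ pairs nontrivially with the Perron direction of $D_{44}$ — guarantees the leading coefficient is nonzero, yielding the two–sided estimates for every $a,b,v_a,v_b$. In \textbf{AI} uniqueness of $(Q_a)$ follows because any solution of the homogeneous equation $\widehat H_{ab}Q_b=Q_aH_{ab}$ is a map of systems in the sense of Definition \ref{equiv-sys}, hence $0$ by irreducibility, Schur and inequivalence. In \textbf{AII} the identification above shows no $(Q_a)$ solves \eqref{imain}; moreover no good vector can exist, since \eqref{GVBo} summed against $e^{-\eps|x|}$ would force an $O(1/\eps)$ bound incompatible with the $1/\eps^2$ growth, the contradiction being obtained by approximating a hypothetical good vector by vectors from the dense set.
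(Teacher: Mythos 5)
Your overall architecture is the right one, and it is essentially the architecture this paper uses: the paper does not actually prove Theorem~\ref{pregressi} here (it is quoted from \cite{KSS16}), but the parallel development it does carry out for equivalent systems (Lemma~\ref{autodim4}, Lemma~\ref{dim>3}, Lemma~\ref{lemq4} and the $d=4$ theorem) is exactly your scheme: back--substitution up the block triangle, the Fredholm criterion $X\in\im(D-I)\Leftrightarrow\tr(XV)=0$ from \eqref{imk}, repackaging of the middle-row solutions into maps $Q_a$, and conversion of the Jordan structure at $1$ into the $1/\eps$ versus $1/\eps^2$ asymptotics. (One small bookkeeping slip: by Lemma~\ref{autodim4} the right $1$-eigenvector of $D_{44}=(H_{ab}\otimes\overline H_{ab})$ is built from the twin forms $(\widehat B_{a^{-1}})$, and that of $D_{11}$ from $(B_{a^{-1}})$; you have these swapped, though it does not affect the structure of the argument.)

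There are, however, two places where the step you assert does not follow as written, and in both cases the missing idea is the real content of the proof. First, you dismiss the eigenvalue $1$ for the mixed blocks $D_{22},D_{33}$ by saying a fixed vector ``unwinds into a nonzero intertwiner.'' It does not, directly: a fixed tuple of $D_{22}$ satisfies the \emph{summed} relation $S_a=\sum_b H_{ab}S_b\overline{\widehat H}_{ab}'$, not the term-by-term intertwining relation $S_aH_{ab}=\widehat H_{ab}S_b$ of Definition~\ref{equiv-sys}; passing from the former to the latter is the Perron--Frobenius/Cauchy--Schwarz argument of \cite{KS96}, \cite{KSS16} (equality in a Cauchy--Schwarz estimate against the positive forms forces each summand relation separately). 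The Schur-type argument you invoke only finishes the job \emph{after} that step. Second, and for the same reason, the Fredholm obstruction for the top row is a \emph{single scalar} condition $\tr(\eta(w_4)\cdot V)=0$, whereas \eqref{imain} is a family of operator identities indexed by all pairs $a,b$; the equivalence of the two (which you call ``the main calculation'' but do not perform) again rests on completing the square inside the trace so that the scalar condition becomes a sum of traces of positive semidefinite operators, each of which must then vanish -- this is precisely the argument of \cite[Theorem~5.13]{KSS16} that the present paper re-invokes in its $d=4$ theorem. Without these two positivity arguments the plan, as stated, does not close; with them it reproduces the proof of \cite{KSS16}. The remaining points (uniqueness of $(Q_a)$ by Schur plus inequivalence, the $1/\eps$ versus $1/\eps^2$ dichotomy from semisimplicity versus a $2\times2$ Jordan block, and the incompatibility of \eqref{GVBo} with $1/\eps^2$ growth on a dense set) are correct and match the paper.
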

\subsection{Equivalent Systems}
Now we pass to the study of equivalent systems, therefore we shall assume, till the end
of this section, that the two systems $(V_a, H_{ba},B_a)$
and $(\w{V}_a,\w{H}_{ba},\w{B}_a)$ are irreducible, normalized, and {equivalent}.

 We are interested in the case where the dimension of the eigenspace of $1$ is $4,$
since this is linked to the growth $1/\eps,$ as shown in the following

\begin{thm}\cite[Theorem 1]{KSS16}
Let $\mcD$ be the matrix constructed by equivalent systems as in \eqref{matrix}
  and let $d$ be the dimension of
the eigenspace of $1$ .
 Then $d=4$ if and only if
$$\|\phi_\eps^{v_a,v_b}\|_2^2\simeq\frac1\eps,\quad
\text{as}\quad \eps\rightarrow 0.$$
\end{thm}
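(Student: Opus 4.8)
The plan is to establish the equivalence $d=4 \iff \|\phi_\eps^{v_a,v_b}\|_2^2 \simeq 1/\eps$ by using the transition-matrix interpretation of $\mcD$ together with the known spectral structure. Since the two systems are equivalent, we are in case B), so $1$ is an eigenvalue of $\mcD$ of \emph{algebraic} multiplicity four; the point of the theorem is to relate the \emph{geometric} multiplicity $d$ to the growth rate. The key analytic fact I would exploit is that the sums $S_n := \sum_{|x|=n}|\langle\mu[e,a,v_a],\pi(x)\mu[e,b,v_b]\rangle|^2$ evolve by one application of $\mcD$, so that the generating function $\sum_n S_n e^{-\eps n}$ is governed, as $\eps\to 0$, by how $\mcD^n$ behaves on the generalized eigenspace for the top eigenvalue $1$. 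The growth of $\|\phi_\eps\|_2^2$ as a power of $1/\eps$ is controlled precisely by the size of the largest Jordan block of $\mcD$ at the eigenvalue $1$: a Jordan block of size $k$ contributes a term $S_n \sim n^{k-1}$, whose discounted sum behaves like $\eps^{-k}$.

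First I would set up the Abel/Tauberian dictionary making this precise: writing $\mcD = P_1 + N + (\text{part with spectral radius} <1)$ where $P_1$ is the spectral projection onto the generalized $1$-eigenspace and $N$ its nilpotent part, one has $\sum_n \mcD^n e^{-\eps n} = (\ID - e^{-\eps}\mcD)^{-1}$, and the singularity at $\eps=0$ comes entirely from the factor $(\ID - e^{-\eps}P_1(\ID+N+\dots))^{-1}$. If the largest Jordan block of $\mcD$ at $1$ has size $k$, then $(\ID - e^{-\eps}\mcD)^{-1}$ has a pole of order $k$ at $\eps=0$, giving $\|\phi_\eps\|_2^2 \simeq \eps^{-k}$ \emph{provided} the relevant initial vector (built from $v_a\otimes\overline{v_b}$ and its companions) has nonzero component along the top of that block. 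So the growth $1/\eps$ corresponds exactly to all Jordan blocks at $1$ having size $1$, i.e. the eigenvalue $1$ being semisimple, i.e. geometric multiplicity equal to algebraic multiplicity $4$, i.e. $d=4$.

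The main obstacle, and the step that needs genuine care, is the direction $d=4 \Rightarrow \|\phi_\eps\|_2^2\simeq 1/\eps$: here one must verify that $1$ is semisimple (no nontrivial Jordan blocks) and that the initial data projects nontrivially onto the eigenspace for \emph{generic} choices of $v_a,v_b$ — otherwise one could accidentally land in a faster-decaying invariant subspace and see growth slower than $1/\eps$. Because $\dim = 4$ forces the full algebraic multiplicity to be realized by honest eigenvectors, semisimplicity at $1$ is automatic; what remains is to show the spectral projection $P_1$ applied to the natural initial vector is nonzero. I would handle this using the block-triangular structure of $\mcD$ in \eqref{matrix}: the diagonal blocks are $\w{H}\otimes\overline{\w{H}}$, $H\otimes\overline{\w{H}}$, $\w{H}\otimes\overline{H}$ and $H\otimes\overline{H}$, and the compatibility condition \eqref{ti} identifies $(B_a)$, $(\w{B}_a)$ and the intertwiner from the equivalence of systems as the four independent $1$-eigenvectors. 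The conclusion should follow by pairing the initial vector against these explicit eigenvectors and using positive-definiteness of the forms $B_a,\w{B}_a$ to ensure the pairing is nonzero.

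For the converse $\|\phi_\eps\|_2^2\simeq 1/\eps \Rightarrow d=4$, I would argue contrapositively: if $d<4$ then some Jordan block at $1$ has size $\geq 2$ (since the algebraic multiplicity is fixed at four), and one shows the initial vector necessarily excites the top of a block of size $k\geq 2$, forcing $\|\phi_\eps\|_2^2 \simeq \eps^{-k}$ with $k\geq 2$, contradicting the $1/\eps$ hypothesis. The delicate point is again genericity — ensuring that the worst (largest) Jordan block is actually seen by the initial data rather than being invisible for all $v_a,v_b$; this I expect to follow from the irreducibility of the matrix system, which prevents the dynamics from decoupling into a proper invariant piece that would hide the slow decay.
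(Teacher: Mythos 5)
The paper does not actually prove this statement: it is imported verbatim as \cite[Theorem 1]{KSS16}, so there is no internal proof to compare against. Your strategy --- read the geometric multiplicity $d$ off the Jordan structure of $\mcD$ at the eigenvalue $1$, and translate a largest Jordan block of size $k$ into a pole of order $k$ of $\sum_n S_n e^{-\eps n}$ --- is exactly the mechanism behind the cited result, and it is the same reasoning this paper itself invokes when it rules out $d=1$ (a size-four block would force $S_n\simeq n^3$, contradicting Haagerup's inequality \eqref{haagerup}). So the approach is the right one.

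That said, three steps in your sketch are genuine gaps rather than routine verifications. First, the decomposition $\mcD=P_1+N+(\text{spectral radius}<1)$ presumes that $1$ is the only eigenvalue of $\mcD$ on the unit circle, whereas the paper only records that the spectral radius equals $1$ (citing \cite{KS96}); you must either identify the peripheral spectrum or argue (e.g.\ from the nonnegativity of the $S_n$, which forbids an excited, unboundedly oscillating term $n^{j-1}\lambda^n$ with $\lambda\neq1$) that other peripheral eigenvalues cannot raise the order of the pole. Second, the non-vanishing of the spectral projection of the initial data is asymmetric between the two directions and is where the real work lies: for $d=4\Rightarrow1/\eps$ you need the pairing with the $1$-eigenvectors to be strictly positive for \emph{every} nonzero $v_a,v_b$, while for the converse you need \emph{some} $v_a,v_b$ exciting the top of a block of size $\ge2$; ``irreducibility prevents decoupling'' is an expectation, not an argument. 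Third, and most concretely, the four independent $1$-eigenvectors are not ``$(B_a)$, $(\w{B}_a)$ and the intertwiner'': what the compatibility condition and the equivalence $(K_a)$ give for free are eigenvectors of the four \emph{diagonal blocks} $D_{ii}$ (Lemma \ref{autodim4}). Since $\mcD$ is only block upper-triangular, these do not automatically assemble into eigenvectors of $\mcD$; the obstruction to lifting them --- the trace condition of Lemma \ref{dim>3} and the existence of the maps $Q_a$ of Lemma \ref{lemq4} satisfying \eqref{imain} --- is precisely what decides whether $d$ is $2$, $3$ or $4$, and any complete proof must engage with that obstruction rather than assume the eigenvectors exist.
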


and the result follows.

\begin{rem}\label{Kauto}
Let $(K_a)_a,$  be an equivalence between the two systems
\begin{equation}\label{equivK}
K_a:V_a\rightarrow \w{V}_a,\quad \w{H}_{ab}K_b= K_a H_{a b},\quad \text{for all}\, a,b\in A.
\end{equation}

One can always assume $K_a^{\ast}=K_{\inv a}.$
Indeed, passing to adjoint in \eqref{equivK}, we get
\begin{equation*}
  H^*_{ab}K_a^*=K^*_b\w{H}_{ab}^*,\quad\mbox{that is}\quad
  \w{H}_{\inv b\inv a}K^*_a=K^*_b H_{\inv b\inv a}\,.
  \end{equation*}

Write $a$ and $b$ for $\inv a$ and $\inv b$, to see that
the tuple $K^*_{\inv a}$ gives another valid system. Since
$$
K_a=\frac{(K_a+K_{\inv a}^*)}2 + i\frac{(K_a-K_{\inv a}^*)}{2i},
$$
anyone of the two addends will satisfy the required condition.
\end{rem}


%
\begin{lem}\label{autodim4}
The following tuples
are, up to constants, the only right eigenvectors corresponding to the eigenvalue $1$
 for the indicated submatrices of the main matrix \eqref{matrix}
\begin{enumerate}
\item $U_1=(B_{\inv a})_a$  for $D_{11};$
\item $U_2=(K_a^{-1} B_{\inv a})_a$ for $D_{22};$
\item $U_3=(B_{\inv a} K_{\inv a}^{-1})_a$ for $D_{33};$
\item $U_4=(\w{B}_{\inv a})_a=(K_a^{-1} B_{\inv a}K_{\inv a}^{-1})_a$ for $D_{44}.$
\end{enumerate}
\end{lem}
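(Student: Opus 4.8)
The plan is to handle the four items in parallel: first I check that each listed tuple is fixed by the corresponding diagonal block, and then I prove that each fixed space is one-dimensional. \emph{For existence}, I read a diagonal block as an operator on tuples of linear maps through $(T_1\otimes T_2)S=T_1ST_2'$, and reduce its fixed-point equation to a compatibility condition. For $D_{11}=(\w H_{ab}\otimes\overline{\w H}_{ab})_{a,b}$ the equation $\sum_b\w H_{ab}\,B_{\inv b}\,(\overline{\w H}_{ab})'=B_{\inv a}$ becomes, after inserting $\w H_{ab}=H^*_{\inv b\inv a}$, using $(\overline{\w H}_{ab})'=\w H^*_{ab}=H_{\inv b\inv a}$, and reindexing $c=\inv b$, precisely the compatibility condition \eqref{compatibility} for $(B_a)$; dually, the same manipulation turns the fixed-point equation for $(\w B_{\inv a})$ under $D_{44}=(H_{ab}\otimes\overline H_{ab})_{a,b}$ into the compatibility condition for the twin forms $(\w B_a)$. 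For $D_{22}=(H_{ab}\otimes\overline{\w H}_{ab})_{a,b}$ I rewrite the intertwining \eqref{equivK} as $H_{ab}K_b^{-1}=K_a^{-1}\w H_{ab}$, pull $K_a^{-1}$ to the front, and land once more on the compatibility condition for $(B_a)$; hence $U_2=(K_a^{-1}B_{\inv a})_a$ is fixed. Finally, since $B_a^*=B_a$ and, after the normalization of Remark~\ref{Kauto}, $K_a^*=K_{\inv a}$, one checks that $U_3$ is the componentwise adjoint of $U_2$, while $D_{33}$ is obtained from $D_{22}$ by interchanging the two tensor factors and conjugating; taking adjoints throughout the $D_{22}$ computation then yields the claim for $U_3$.

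\emph{For uniqueness}, I exploit that the equivalence renders all four blocks similar. Writing $\w H_{ab}=K_aH_{ab}K_b^{-1}$ and using $K_a^*=K_{\inv a}$ to absorb the conjugate-transpose factors, suitable conjugations by the $K_a$ carry each of $D_{11}$, $D_{22}$, $D_{33}$ onto the single block $D_{44}$, sending the respective candidate vectors to $U_4=(\w B_{\inv a})_a$; it therefore suffices to prove that the fixed space of $D_{44}$ is one-dimensional. Now $D_{44}$ is the positive map $X_a\mapsto\sum_b H_{ab}X_bH^*_{ab}$ on tuples of positive forms on the $V_a^*$ (using $(V_a^*)^*=V_a$); by \cite{KS96} its spectral radius is $1$, and $\w B$ is a strictly positive fixed vector, so $1$ is the Perron eigenvalue. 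Given a further Hermitian fixed vector $X$ — and I may take $X$ Hermitian, since $D_{44}$ commutes with the $*$-operation and so fixes the real and imaginary parts of any fixed vector separately — I choose the largest $t$ with $\w B-tX\ge 0$. Then $\w B-tX$ is a nonzero positive fixed vector with nontrivial kernel, and the identity $0=\langle\xi,(\w B-tX)_a\xi\rangle=\sum_b\langle H^*_{ab}\xi,(\w B-tX)_bH^*_{ab}\xi\rangle$ shows the kernels $N_a\subseteq V_a^*$ satisfy $H^*_{ab}(N_a)\subseteq N_b$, that is, $(N_a)$ is an invariant subsystem of the twin system $(\w V_a,\w H_{ba})$. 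Irreducibility of the twin forces the kernels to be trivial, a contradiction; hence $X$ is a multiple of $\w B$ and the fixed space is one-dimensional.

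The main obstacle is exactly this last step: upgrading from the \emph{uniqueness of the positive form} already recorded in \cite{KS04} to the genuine \emph{simplicity of the eigenvalue $1$} over all eigenvectors, which is where the Perron--Frobenius/positivity argument and the irreducibility hypothesis get consumed. Everything preceding it is a careful bookkeeping of the identifications $V_a^*=\overline V_a'$, $(V^*)^*=V$, and $\w H_{ba}=H^*_{\inv a\inv b}$, together with the two structural relations already in hand, namely compatibility \eqref{compatibility} and the intertwining \eqref{equivK}; the one delicate point there is to track the variance correctly, so that the kernel family genuinely violates irreducibility of the \emph{twin} (rather than of the original) system.
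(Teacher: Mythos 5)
Your proof is correct, and on the existence half it is exactly the paper's (one-line) argument: the fixed-point equations for $U_1$ and $U_4$ reduce to the compatibility condition \eqref{compatibility} for $(B_a)$ and $(\w{B}_a)$ respectively, $U_2$ follows by pulling $K_a^{-1}$ through via \eqref{equivK}, and $U_3$ is the componentwise adjoint of $U_2$ using $K_a^{\ast}=K_{\inv a}$. Where you genuinely diverge is on uniqueness: the paper does not argue this at all in the proof of the lemma, but leans on the facts it quotes from \cite{KS96} and \cite{KS04} (spectral radius one, eigenvalue $1$ of multiplicity four in the equivalent case, uniqueness of the positive definite compatible tuple), which already encode the simplicity of $1$ for each diagonal block. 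You instead conjugate all four blocks onto $D_{44}$ by the diagonal operators built from $K_a\otimes\overline{K_a}$ --- the same similarity the paper itself uses later in Lemma \ref{dim>3} --- and then give a self-contained Perron--Frobenius argument: split a fixed vector into Hermitian parts, subtract the maximal multiple of the strictly positive fixed vector $\w{B}$, and observe that the resulting kernels form an invariant subsystem of the twin, which irreducibility forces to be either zero (contradicting maximality of $t$) or everything (whence proportionality). You correctly identify, and correctly close, the one real gap between ``the positive compatible tuple is unique'' and ``the eigenspace of $1$ is one-dimensional over $\C$''; the variance bookkeeping showing that the kernel family is invariant under $\w{H}_{\inv b\inv a}=H_{ab}^{\ast}$, hence a subsystem of the \emph{twin}, is also right. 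The net effect is a proof that is more complete than the paper's, at the cost of reproving material the paper imports from \cite{KS96}.
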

\begin{proof}
 The result follows by  the compatibility condition \eqref{compatibility},
the property \eqref{equivK} of the equivalence $(K_a),$ and the identity $K_a^{\ast}=K_{\inv a}.$
\end{proof}

Now, \eqref{ti} shows that ${V}=(B_a)_a$ is an eigenvector corresponding to $1$ for the dual of $D_{44}$
$$D_{44}'=\left( (H_{ab}\otimes\overline{H_{ab}})_{a,b}\right)'=
(H_{ba}^{\ast}\otimes H'_{ba})_{a,b}.$$
Moreover, the space $\Ker((D_{44}-I)')$ is one-dimensional and generated by ${V}.$
The  equality
$\im(D_{44}-I)=\Ker((D_{44}-I)')^{\bot}$ therefore yields the following property
involving the trace \eqref{otto}, for any
arbitrary $X$:
\begin{equation}\label{imk}
X\in\im(D_{44}-I)\Leftrightarrow \tr(X {V})=\tr({V}X)=0.
\end{equation}


\begin{lem}\label{dim>3} Under the hypotheses hitherto assumed on the matrix systems,
the following are equivalent:
\begin{itemize}
\item[a)] The dimension of the eigenspace corresponding to the eigen\-va\-lue $1$ of
$\mcD$ is $d\geq 3;$
\item[b)]
 $\displaystyle{\sum_{a,b\in A}{\tr(K_a^{-1} E_{a b}\, \w{B}_{\inv b} H_{a b}^{\ast}\, B_a)}}=0.$
 \end{itemize}
\end{lem}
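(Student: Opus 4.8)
The plan is to compute the dimension $d$ of the eigenspace $\Ker(\mcD-I)$ directly from the block upper-triangular structure of~\eqref{matrix}. Writing an eigenvector as a tuple $(X_1,X_2,X_3,X_4)$, the equation $\mcD(X_1,X_2,X_3,X_4)=(X_1,X_2,X_3,X_4)$ unwinds, from the bottom diagonal block upward, into
\begin{align*}
(D_{44}-I)X_4 &= 0,\\
(D_{22}-I)X_2 &= -D_{24}X_4,\\
(D_{33}-I)X_3 &= -D_{34}X_4,\\
(D_{11}-I)X_1 &= -\bigl(D_{12}X_2+D_{13}X_3+D_{14}X_4\bigr).
\end{align*}
By Lemma~\ref{autodim4} each diagonal block $D_{ii}-I$ has the one-dimensional kernel $\C U_i$, so $1$ has algebraic multiplicity four and $1\le d\le 4$; the point is to decide when $d\ge 3$.

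First I would record, for each diagonal block, the Fredholm solvability criterion in the style of~\eqref{imk}. Exactly as $V=(B_a)_a$ spans $\Ker((D_{44}-I)')$ by the compatibility condition~\eqref{ti}, the twin compatibility condition shows that $\w V=(\w B_a)_a$ spans $\Ker((D_{11}-I)')$, and the analogous mixed tuples span the one-dimensional cokernels of $D_{22}-I$ and $D_{33}-I$. Consequently each inhomogeneous step above is solvable if and only if its right-hand side is annihilated, under the trace pairing~\eqref{otto}, by the corresponding cokernel generator, and when solvable its solution is unique modulo $\C U_i$. Threading the flag from $X_4$ to the pair $X_2,X_3$ and finally to $X_1$ then turns the computation of $d$ into $4$ minus the rank of a small obstruction matrix whose entries are the resulting scalar traces.

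Next I would exploit the symmetries of~\eqref{matrix}: the blocks $D_{22}$ and $D_{33}$ are interchanged by complex conjugation together with the flip of tensor factors, while the equivalence $K$ of Remark~\ref{Kauto} (normalized so that $K_a^\ast=K_{\inv a}$) intertwines $D_{11}$ with $D_{44}$ through the explicit eigenvectors of Lemma~\ref{autodim4}. These symmetries force the several a~priori obstructions to collapse, so that $d\ge 3$ is governed by the vanishing of a single scalar. The remaining task is to evaluate that scalar. Substituting the explicit forms $U_2=(K_a^{-1}B_{\inv a})_a$, $U_4=(\w B_{\inv a})_a$ and $\w V=(\w B_a)_a$, expanding the tensor action via $(T_1\otimes T_2)S=T_1 S T_2'$, and using the twin identities $\w H_{ba}=H^\ast_{\inv a\inv b}$ together with $K_a^\ast=K_{\inv a}$ and the definition of $E_{ab}$, the obstruction trace should reduce after cyclic rearrangement to
\begin{equation*}
\sum_{a,b\in A}\tr\!\left(K_a^{-1}E_{ab}\,\w B_{\inv b}\,H_{ab}^\ast\,B_a\right),
\end{equation*}
which is precisely condition~(b).

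The main obstacle is this last computation: correctly tracking the web of dual, complex-conjugate and adjoint identifications (so that the primes, bars and stars land where~\eqref{matrix} requires) and verifying that the various solvability obstructions produced by the flag really do coincide, up to conjugation and cyclic permutation of the trace, with the single displayed sum. A secondary delicate point is to confirm that it is this obstruction, rather than the solvability of the intermediate equations for $X_2$ and $X_3$, that controls the jump from $d=2$ to $d\ge 3$; this is exactly where the conjugation symmetry between $D_{22}$ and $D_{33}$ and the intertwining role of $K$ must be used with care.
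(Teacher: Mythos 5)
Your skeleton is essentially the paper's own proof: exploit the block upper-triangular structure of $\mcD$, use the one-dimensional kernels from Lemma~\ref{autodim4} together with the Fredholm criterion $X\in\im(D_{ii}-I)\Leftrightarrow$ the trace pairing with the cokernel generator vanishes, and reduce $d\ge 3$ to a single scalar obstruction. (The paper streamlines the bookkeeping by first conjugating all four diagonal blocks to $D=D_{44}$ via $S_1,S_2,S_3$, so that every cokernel is spanned by the same $V=(B_a)_a$ and the off-diagonal blocks become just $A=S_3^{-1}D_{34}$ and $B=S_2^{-1}D_{24}$ sitting in positions $(1,2),(3,4)$ and $(1,3),(2,4)$ respectively; your "mixed tuples" version is equivalent but messier.) The final trace evaluation you postpone is in fact routine: $(AZ)_a=\sum_b K_a^{-1}E_{ab}\w B_{\inv b}H_{ab}^{\ast}$, so $\tr(AZ\,V)$ is exactly the sum in (b).

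The genuine gap is that the two steps you flag as "obstacles" are the entire content of the lemma, and the harder one cannot be waved away as symmetry bookkeeping. For (b)$\Rightarrow$(a) your flag would naturally try to build an eigenvector with $X_4=Z$, which needs \emph{both} intermediate equations solvable; the paper instead produces the two new kernel vectors $(W_3,Z,0,0)^{\top}$ and $(W_2,0,Z,0)^{\top}$ with $X_4=0$, using only the first-row obstructions $AZ,BZ\in\im(D-I)$ and the identity $(AZ)_a^{\ast}=(BZ)_a$ (which is what makes the two obstructions one; it must be \emph{proved}, using $Z_a^{\ast}=Z_a$ and $E_{\inv b\inv a}^{\ast}=E_{ab}$, not merely asserted). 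For (a)$\Rightarrow$(b) a case analysis on a kernel element $P$ not proportional to $u=(Z,0,0,0)^{\top}$ is unavoidable: if $P_4\neq 0$ the third equation gives the obstruction; if $P_4=0$ and $(P_2,P_3)$ is $(Z,0)$ or $(0,Z)$ the first equation gives it; but there remains the case in which \emph{every} such $P$ has $(P_2,P_3,P_4)=(cZ,Z,0)$ with $c\neq 0$, where no single equation yields the obstruction. The paper handles this by taking two independent such solutions $P,P'$, subtracting to get $(D-I)(P_1'-P_1)+(c'-c)AZ=0$, and showing $c'=c$ would contradict $d\ge 3$. Your "rank of an obstruction matrix" framing could in principle encode this, but as written the proposal neither sets up that matrix nor performs the case analysis, so the implication (a)$\Rightarrow$(b) is not established.
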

\begin{proof}
In case of equivalence of matrix systems, the equality \eqref{equivK} yields a similarity
between all elements in the main diagonal in ${\mathcal{D}},$  indeed
$$
D_{ii}=S_i\, D_{44}\,S_i^{-1}\quad i=1,2,3,$$
where  $S_1=\text{diag}(K_a\otimes \overline{K_a},\,a\in A),$
$\quad S_2=\text{diag}(\Id\otimes \overline{K_a},\,a\in A),$ and $S_3=\text{diag}(K_a\otimes \Id,\,a\in A).$

Hence the full matrix will be similar to a matrix having all the diagonal
elements equal to $D=D_{4,4}$.

 To analyze  the dimension of the eigenspace corresponding to the eigen\-va\-lue $1$ of
$\mcD$  means to find all solutions of $\mcD P=P,$  therefore we can assume that $\mathcal{D}-I$ is equal to

$$\left(\begin{array}{cccc}
D-I &A&B&C\\
0&D-I&0&B\\
0&0&D-I&A\\
0&0&0&D-I
\end{array}\right),$$
where  $A= S_3^{-1} D_{34},$ and $B=S_2^{-1} D_{2 4}.$

Let $Z=(Z_a)_a$ denote a (normalized) eigenvector corresponding to the eigen\-va\-lue $1$  of $D,$
(note that $Z_a^{\ast}=Z_a$);
   it  verifies  the equality
$$
(A\,Z)_a^{\ast}=\sum_{b}{(K_a^{-1} E_{a b}Z_b H_{a b}^{\ast})^{\ast}}
= \sum_{b}{H_{a b}Z_b (K_a^{-1} E_{a b})^{\ast}}=(B\,Z)_a.
$$

The eigenspace corresponding to the eigen\-va\-lue $1$  of ${\mathcal D}$  has dimension greater
or equal to $3$
 if and only if there exist $2$ linear independent eigenvectors $0\neq P=(P_j)_{j=1,\dots,4},$
not proportional to $u=(\delta_{j1}Z)_{j=1,\dots, 4},$  such that
${\mathcal D} P=P$.

If $P_2=P_3=P_4=0,$ and ${\mathcal D} P=P$ then $P_1$ is either zero or proportional to $Z.$
 Hence $P$  not proportional to $u$ means $(P_2,P_3,P_4)\neq (0,0,0).$

Observe that, for the discussion  before \eqref{imk},
 b) means $\tr(AZ{V})=0$ or, equivalently, $AZ\in\im(D-I),$ where $Z=(\w{B}_{\inv a})_a$ and ${V}=(B_a)_a.$

 We now prove a) implies b).  Assume $d\geq 3.$

Equality ${\mathcal D} P=P$ is verified by at least $2$ linear independent vectors not proportional to $u.$
They verify
\begin{equation}\label{eq1}\left\{\begin{array}{l}
(D-I) P_1+A P_2+B P_3+ C P_4=0\\
(D-I) P_2+ B P_4=0 \\
(D-I) P_3+ A P_4=0 \\
(D-I) P_4=0
\end{array}\right.\end{equation}

The last equation in \eqref{eq1} implies $P_4$ is either zero or proportional to $Z$.

If $P_4=cZ$, without loss of generality we can assume $c=1.$ From the third equation in \eqref{eq1}
we get  $AZ\in\im(D-I)$ which is equivalent, by \eqref{imk}, to b).

If $P_4=0,$ the second and third equation in \eqref{eq1} imply both $P_2$ and $P_3$ are either zero
or proportional to $Z,$ but in any case $(P_2,P_3)\neq (0,0).$

 If $(P_2,P_3)= (0,Z), (Z,0),$ from the first equation in \eqref{eq1}, we get,  passing to adjoint
if needed, that $AZ\in\im(D-I),$ which is equivalent, by \eqref{imk}, to b).

Finally, assume each solution of \eqref{eq1} verifies $(P_2,P_3, P_4)= (c Z, Z, 0)$ for some
$0\neq c\in\C,$ (we can always assume one of the constants is equal to $1$).

Since $d\geq 3,$ there are at least two such solutions, say $P$ and $P'.$ With obvious meaning
of symbols, by the first equation in \eqref{eq1}
$$(D-I)(P_1'-P_1)+(c'-c) A Z=0.$$
If $c'\neq c,$ the latter implies b).

On the other hand $c'=c$ is impossible. On the contrary, if $c'=c$ we get $P'_1=P_1+\alpha Z,$ $\alpha\in\C.$
In other words, up to constant, there is only one other vector not proportional to $u$  in
a basis of  $\Ker(\mcD-I),$ in contradiction with $d\geq 3.$

We now show that b) implies a).

As already noted, b) means that $AZ\in\im(D-I),$ and passing to adjoint,
$BZ\in\im(D-I),$ too.
Therefore there are  vectors $W_2=(W_{2,a})_a,$ and $W_3=(W_{3,a})_a,$ such that
$W_{2,a}=W_{3,a}^{\ast},$ and
\begin{equation}\label{eq2}
(D-I) W_2+ B Z=0, \quad
(D-I) W_3+ A Z=0.
\end{equation}

It follows that
$(W_3, Z, 0, 0)^{\top}$ and $(W_2, 0, Z, 0)^{\top}$
are in $\Ker(\mcD-I),$ and together with $u$ they form a set of three linear independent vectors.
Hence $\dim\Ker(\mcD-I)\geq 3.$
\end{proof}

\begin{lem}\label{lemq4}
Assume that the dimension of
the eigenspace corresponding to the eigen\-va\-lue $1$  of  $\mcD$ is $d\geq 3.$

Then  there exist linear maps
$Q_b:V_b\rightarrow \w{V}_b,$  $b\in A,$ such that
 the vector
$$\left(\begin{array}{c}
(\w{B}_{b^{-1}}\, Q_b^{\ast})_{b}\\
( Q_b \, \w{B}_{b^{-1}})_{b}\\
(\w{B}_{b^{-1}})_{b}
\end{array} \right)$$
is  a (right) eigenvector corresponding to the eigenvalue
$1$ of the the principal submatrix
$\mathcal{D}_1$ of $\mathcal{D},$
 obtained by deleting the rows and columns of
$D_{1,1}.$
\end{lem}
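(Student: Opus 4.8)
The plan is to reuse the similarity reduction carried out in the proof of Lemma~\ref{dim>3} and to read the desired eigenvector of~$\mcD_1$ off the vectors $W_2,W_3$ produced there. Recall that conjugating $\mcD$ by $S=\mathrm{diag}(S_1,S_2,S_3,\Id)$, with $S_2=\mathrm{diag}(\Id\otimes\overline{K_a})_a$ and $S_3=\mathrm{diag}(K_a\otimes\Id)_a$, replaces every diagonal block by $D=D_{44}$ and brings $\mcD-I$ into the block upper-triangular form displayed in Lemma~\ref{dim>3}. Since $\mcD_1$ is obtained by deleting the first block row and column, conjugating $\mcD_1$ by $\mathrm{diag}(S_2,S_3,\Id)$ produces
$$
M_1-I=\begin{pmatrix} D-I & 0 & B \\ 0 & D-I & A \\ 0 & 0 & D-I \end{pmatrix},
\qquad A=S_3^{-1}D_{34},\quad B=S_2^{-1}D_{24}.
$$
Thus it suffices to exhibit one kernel vector of $M_1-I$ and transport it back by $\mathrm{diag}(S_2,S_3,\Id)$.

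First I would solve $(M_1-I)(P_2,P_3,P_4)^{\top}=0$. The bottom equation forces $P_4$ to be a multiple of the normalized eigenvector $Z=(\w{B}_{b^{-1}})_b$ of~$D$, so I set $P_4=Z$; its last block is then already the third block required by the statement. The hypothesis $d\geq 3$ is, by Lemma~\ref{dim>3}, equivalent to condition~b), i.e.\ to $AZ\in\im(D-I)$, and passing to adjoints also gives $BZ\in\im(D-I)$. Hence, exactly as in~\eqref{eq2}, there are $W_2=(W_{2,a})_a$ and $W_3=(W_{3,a})_a$ with $(D-I)W_2+BZ=0$, $(D-I)W_3+AZ=0$ and $W_{2,a}=W_{3,a}^{\ast}$. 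Then $(W_2,W_3,Z)^{\top}\in\Ker(M_1-I)$, and transporting back shows that
$$
\bigl(S_2W_2,\; S_3W_3,\; Z\bigr)^{\top}
$$
is a right eigenvector of $\mcD_1$ for the eigenvalue~$1$, whose last block is $(\w{B}_{b^{-1}})_b$ as desired.

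It remains to recognize the first two blocks in the stated form. Using the action rule $(T_1\otimes T_2)S=T_1ST_2'$ one has $(S_3W_3)_b=K_bW_{3,b}$, which lands in the appropriate space of maps; since $\w{B}_{b^{-1}}$ is invertible I would define
$$
Q_b:=K_b\,W_{3,b}\,\w{B}_{b^{-1}}^{-1}:V_b\to\w{V}_b,
$$
so that the second block is $(S_3W_3)_b=Q_b\,\w{B}_{b^{-1}}$ by construction. The final and most delicate step is to check that the first block is simultaneously $(\w{B}_{b^{-1}}\,Q_b^{\ast})_b$ for this \emph{same} $Q_b$. Here $(S_2W_2)_b=W_{2,b}\,\overline{K_b}'$, while $\w{B}_{b^{-1}}Q_b^{\ast}=W_{3,b}^{\ast}K_b^{\ast}$; the two agree once we invoke the three structural identities available to us, namely $W_{2,b}=W_{3,b}^{\ast}$ (from Lemma~\ref{dim>3}), the self-adjointness $\w{B}_{b^{-1}}^{\ast}=\w{B}_{b^{-1}}$, and $K_b^{\ast}=K_{b^{-1}}=\overline{K_b}'$ (Remark~\ref{Kauto}). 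The hard part is precisely this bookkeeping: keeping the tensor-factor adjoints, the conjugations hidden inside $S_2$ and $S_3$, and the identification $K_b^{\ast}=K_{b^{-1}}$ mutually consistent, so that the first component lands exactly on $\w{B}_{b^{-1}}Q_b^{\ast}$ rather than on some twisted variant.
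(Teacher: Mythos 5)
Your proposal is correct and follows essentially the same route as the paper: both invoke the similarity by $\mathrm{diag}(S_2,S_3,\Id)$ from Lemma~\ref{dim>3}, extract $W_2,W_3$ with $W_{2,a}=W_{3,a}^{\ast}$ from condition~b), set $Q_b=K_bW_{3,b}\w{B}_{b^{-1}}^{-1}$, and verify $(S_2W_2)_b=W_{2,b}K_b^{\ast}=(K_bW_{3,b})^{\ast}=\w{B}_{b^{-1}}Q_b^{\ast}$. The only cosmetic difference is that the paper writes the kernel equations directly in the unconjugated form $(D_{ii}-I)S_iW_i+D_{i4}Z=0$ rather than first passing to $M_1$ and transporting back.
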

\begin{proof}
  Following the notation of Lemma \ref{dim>3}, set $S_2=\text{diag}(\Id\otimes \overline{K_a},\,a\in A),$
 and $S_2=\text{diag}(K_a\otimes \Id,\,a\in A).$

Since $d\geq 3,$  from the proof of Lemma
\ref{dim>3}, we get that
there are  vectors $W_2=(W_{2,a})_a,$ and $W_3=(W_{3,a})_a,$ such that
$W_{2,a}=W_{3,a}^{\ast}$ and
$$\left\{\begin{array}{l}
(D_{22}-I) S_2 W_2+ D_{24} Z=0 \\
(D_{33}-I) S_3 W_3+ D_{34} Z=0 \\
(D_{44}-I) Z=0,
\end{array}\right.
$$
where $Z$ is an eigenvector corresponding to the eigenvalue $1$ for $D_{44}$, and, without
loss of generality we can assume
$Z=U_4=(\w{B}_{\inv a})_a,$ see Lemma \ref{autodim4}.
Also note that
$$(S_3 W_3)_b=K_b W_{3,b}:\w{V}_{\inv b}=V_{b}^{\ast}\rightarrow \w{V}_b. $$

Since $\w{B}_{\inv b}$ is strictly positive definite,  it is invertible as
 a linear map $\w{B}_{\inv b}:V_{b}^{\ast}\rightarrow V_{ b}.$  Hence the map
$$Q_b=K_b\,W_{3,b}\,\w{B}_{\inv b}^{-1}:V_b\rightarrow\w{V}_b,$$
is well defined and linear. It follows $K_b\,W_{3,b}=Q_b\,\w{B}_{\inv b}$
and $$(S_2 W_2)_b= W_{2,b} K_b^{\ast}=(K_b\, W_{3,b})^{\ast}=\w{B}_{\inv b}\,Q_b^{\ast}.$$
\end{proof}
\begin{thm}
Under the hypotheses hitherto assumed on  the matrix systems, the dimension of the eigenspace
corresponding to the eigen\-va\-lue $1$  of  $\mcD$  is $d=4$ if and only if the following conditions hold
\begin{itemize}
\item[1)] $\displaystyle{\sum_{a,b\in A}{\tr(\w{H}_{ab}\, B_{\inv b}\, K_{\inv b}^{-1} \,E_{a b}^{\ast}\,\w{B}_a)}}=0;$ 
\item[2)] Given  the linear maps $Q_b:V_b\rightarrow \w{V}_b,$ provided by
 Lemma \ref{lemq4}, the following identity holds:
 $$\w{H}_{ab}Q_b+ E_{ab}=Q_a H_{ab},\quad \text{for all}\; {a,b\in A}.$$
\end{itemize}
\end{thm}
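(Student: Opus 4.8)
The plan is to pin down the maximal dimension by exhibiting, whenever it exists, an explicit fourth eigenvector of $\mcD$, and to read off 1) and 2) as the two hypotheses that make that construction succeed. First I would identify condition 1) with the criterion $d\geq 3$ of Lemma \ref{dim>3}. Substituting the equivalence identity $\w{H}_{ab}=K_aH_{ab}K_b^{-1}$ from \eqref{equivK}, together with its consequences $\w{B}_{\inv b}=K_b^{-1}B_{\inv b}K_{\inv b}^{-1}$ and $\w{B}_aK_a=K_{\inv a}^{-1}B_a$ drawn from Lemma \ref{autodim4}, the sum in 1) reduces, after a single cyclic permutation under the trace, to $\sum_{a,b}\tr(H_{ab}\w{B}_{\inv b}E_{ab}^{\ast}K_{\inv a}^{-1}B_a)$. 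With the notation of Lemma \ref{dim>3} this is $\tr\big((BZ)V\big)$, where $Z=(\w{B}_{\inv a})_a$ and $V=(B_a)_a$, i.e. the complex conjugate of $\tr\big((AZ)V\big)$. By the adjoint relation $(AZ)^{\ast}=BZ$ and the equivalence $X\in\im(D_{44}-I)\Leftrightarrow\tr(XV)=0$ of \eqref{imk}, condition 1) holds precisely when $d\geq 3$, that is, exactly when the maps $Q_b$ of Lemma \ref{lemq4} are available. From here on I assume condition 1) and work with these $Q_b$.

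The construction itself is a quadratic ansatz. Set
\[
P=\big((Q_a\w{B}_{\inv a}Q_a^{\ast})_a,\ (\w{B}_{\inv a}Q_a^{\ast})_a,\ (Q_a\w{B}_{\inv a})_a,\ (\w{B}_{\inv a})_a\big),
\]
a candidate right eigenvector of $\mcD$ written in the four blocks of \eqref{matrix}. Its last three blocks are exactly the eigenvector of the submatrix $\mcD_1$ produced in Lemma \ref{lemq4}; since the second, third and fourth rows of $\mcD$ do not meet the first column, the corresponding three block equations of $\mcD P=P$ hold automatically. Two identities are packaged in them and will be reused below: that $(\w{B}_{\inv a})_a$ is a right eigenvector of $D_{44}$, namely $\sum_bH_{ab}\w{B}_{\inv b}H_{ab}^{\ast}=\w{B}_{\inv a}$ (Lemma \ref{autodim4}), and the summed relation $\sum_bF_{ab}\w{B}_{\inv b}H_{ab}^{\ast}=0$, where $F_{ab}:=\w{H}_{ab}Q_b+E_{ab}-Q_aH_{ab}$; note that condition 2) is exactly $F_{ab}=0$ for all $a,b$, i.e. the relation \eqref{imain}.

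Everything now hinges on the first block equation. Writing $G_{ab}=\w{H}_{ab}Q_b+E_{ab}=F_{ab}+Q_aH_{ab}$, the first row of \eqref{matrix} applied to $P$ yields, in block $a$,
\[
\sum_bG_{ab}\w{B}_{\inv b}G_{ab}^{\ast}
=Q_a\Big(\sum_bH_{ab}\w{B}_{\inv b}H_{ab}^{\ast}\Big)Q_a^{\ast}+\sum_bF_{ab}\w{B}_{\inv b}F_{ab}^{\ast},
\]
the two cross terms disappearing by the summed relation and its adjoint. The first summand is $Q_a\w{B}_{\inv a}Q_a^{\ast}$, the first block of $P$ itself, so the first block equation of $\mcD P=P$ is solvable, after correcting the first block, if and only if $\Sigma:=\big(\sum_bF_{ab}\w{B}_{\inv b}F_{ab}^{\ast}\big)_a$ lies in $\im(D_{11}-I)$. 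The strictly positive left eigenvector of $D_{11}$ is the twin tuple $(\w{B}_a)_a$, since the compatibility condition \eqref{compatibility} for the twin system reads $\sum_a\w{H}_{ab}^{\ast}\w{B}_a\w{H}_{ab}=\w{B}_b$; hence, by the analogue of \eqref{imk} for $D_{11}$, the membership $\Sigma\in\im(D_{11}-I)$ is equivalent to $\sum_{a,b}\tr(F_{ab}\w{B}_{\inv b}F_{ab}^{\ast}\w{B}_a)=0$. Because every $\w{B}_{\inv b}$ and every $\w{B}_a$ is strictly positive definite, each summand is non-negative and the total vanishes only when all $F_{ab}=0$. Thus, granted condition 1), one has $d=4$ rather than $d=3$ exactly when condition 2) holds; and when it does, $\Sigma=0$ and $P$ itself is the fourth eigenvector.

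The one genuine computation — and the step I expect to be the main obstacle — is the collapse of the first block to $Q_a\w{B}_{\inv a}Q_a^{\ast}+\sum_bF_{ab}\w{B}_{\inv b}F_{ab}^{\ast}$. This is not an identity in a free variable $Q$: it holds only for the specific $Q_b$ of Lemma \ref{lemq4}, consuming both $\sum_bH_{ab}\w{B}_{\inv b}H_{ab}^{\ast}=\w{B}_{\inv a}$ and the summed relation, so that the obstruction, though quadratic in appearance, is a fixed positive element detected by pairing against $(\w{B}_a)_a$. The converse reading is then immediate: once condition 1) holds the eigenvectors with vanishing last block fill only a three dimensional space, so $d=4$ forces an eigenvector whose last block is a non-zero multiple of $(\w{B}_{\inv a})_a$; its lower blocks agree with those of $P$ up to the kernels of the diagonal blocks $D_{jj}$, which do not affect the first block obstruction, and solvability of the first block equation then gives $\Sigma\in\im(D_{11}-I)$, hence $F_{ab}=0$. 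Combining this equivalence with the identification carried out in the first step yields $d=4\Leftrightarrow$ 1) and 2).
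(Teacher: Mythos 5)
Your proposal is correct and follows essentially the same route as the paper: condition 1) is identified with the criterion of Lemma~\ref{dim>3} for $d\geq 3$, and condition 2) with the solvability of the remaining first block equation, detected by pairing $D_{12}W_2+D_{13}W_3+D_{14}W_4$ against the left eigenvector $(\w{B}_a)_a$ of $D_{11}$ as in \eqref{imk}. The completing-the-square computation you carry out — reducing the obstruction to $\sum_{a,b}\tr(F_{ab}\w{B}_{\inv b}F_{ab}^{\ast}\w{B}_a)$ with $F_{ab}=\w{H}_{ab}Q_b+E_{ab}-Q_aH_{ab}$ and concluding by positivity — is exactly the argument the paper delegates to \cite[Theorem 5.13]{KSS16}, so you have supplied the cited step rather than replaced it.
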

\begin{proof}
Condition 1) is equivalent to $d\geq 3$ by Lemma \ref{dim>3}.
Condition 2) is equivalent, by the same argument used in \cite[Theorem 5.13]{KSS16}
 to
 \begin{eqnarray*}
0\!\!\!&=&\!\!\!\!\sum_{a,b\in A}{\mathrm{tr}}{
(\w{B}_a \, E_{ab}\, \w{B}_{\inv b}\, Q_b^{\ast}\,\w{H}_{ab}^{\ast}
+\w{B}_a\, \w{H}_{ab}\, Q_b\, \w{B}_{\inv b}\, {E}_{ab}^{\ast}
+\w{B}_a\, {E}_{ab}\, \w{B}_{\inv b}\,{E}_{ab}^{\ast})}\\
&=&\!\!\!\mathrm{tr}\left(\sum_{a\in A}{\w{B}_a
\sum_{b\in A}{\left[ E_{ab}\w{B}_{\inv b}Q_b^{\ast}\w{H}_{ab}^{\ast}+
\w{H}_{ab}Q_b \w{B}_{\inv b}{E}_{ab}^{\ast}+
 E_{ab}\w{B}_{\inv b}{E}_{ab}^{\ast}\right]}}\right),
\end{eqnarray*}
meaning that the vector
$$W=(W_j)_{j=2,3,4}=\left(\begin{array}{c}
(\w{B}_{b^{-1}}\, Q_b^{\ast})_{b}\\
( Q_b \, \w{B}_{b^{-1}})_{b}\\
(\w{B}_{b^{-1}})_{b}
\end{array} \right)$$
is such that
$D_{1,2} W_2+ D_{1,3} W_3+ D_{1,4} W_4\in\im(D_{11}-I).$

Hence conditions 1) and 2) are equivalent to  the existence of a fourth linear independent vector
$W=(W_j)_{j=1,\dots,4}$  in (the basis of) $\Ker(\mcD-I)$  besides those provided in the proof of
Lemma \ref{dim>3}.
\end{proof}
%

We summarize  the results for equivalent systems in the following Theorem:
\begin{thm}
 If the two systems  $(V_a,H_{ba},B_a)$ and
 $(\widehat{V}_a,\widehat{H}_{ba},\widehat{B}_a)$
  {are equivalent}, then $1$ is an eigenvalue of multiplicity four for $\mcD$ and the dimension
of the eigenspace of $1$ is two, three or four.
  In particular
  \begin{itemize}
\item[\textbf{BI}] The dimension of the eigenspace of $1$
  is $4$ if and only if, for every $a,b,\in A$ and $v_a,v_b\in V_a,V_b$, one has
  \begin{equation}\label{crescita1/eps}
 \|\phi_\eps^{v_a,v_b}\|_2^2
\simeq\frac1\eps,\quad \text{as}\quad \eps\rightarrow 0.
\end{equation}

As well as in the case \textbf{AI} there exists a tuple of linear maps
$Q_a:V_a\to\widehat{V}_a$ satisfying \eqref{imain}
\begin{equation*}
  {\widehat{H}}_{ab}Q_b+ E_{ab}=Q_a H_{ab},\;  {a,b\in A},
\end{equation*}
but in this case the tuple is not unique.
\item[\textbf{BII}]
  The dimension of the eigenspace of $1$ is $2$ (respectively $3$)
  if and only if
\begin{equation*}
\|\phi_\eps^{v_a,v_b}\|_2^2 \simeq\frac1{\eps^3}\quad (\text{respectively}\; \frac1{\eps^2}),
\quad \text{as}\quad \eps\rightarrow 0.
\end{equation*}
%
In both cases  {no vector in $\mcH$}
satisfies the good vector bound \eqref{GVBo}
and no system of $Q_a:V_a\to\widehat{V}_a$ can satisfy \eqref{imain}.
  \end{itemize}

\end{thm}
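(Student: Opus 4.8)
The plan is to read the statement off the structural lemmas already in hand, working in the reduced form of $\mcD-I$ produced in the proof of Lemma \ref{dim>3}. After conjugating by $\mathrm{diag}(S_1,S_2,S_3,\Id)$ every diagonal block equals $D:=D_{44}$, whose $1$-eigenspace is the line spanned by $Z=(\w{B}_{\inv a})_a$, and by \eqref{imk} one has $Z\notin\im(D-I)$; since the eigenvalue $1$ of $D$ is simple, $\Ker(D-I)\oplus\im(D-I)$ is the whole block and $D-I$ is invertible on $\im(D-I)$. The algebraic multiplicity of $1$ for $\mcD$ is four (case B)), so the geometric multiplicity obeys $d\le 4$; everything then comes down to the nilpotent that $\mcD-I$ induces on the four-dimensional generalized $1$-eigenspace, whose off-diagonal couplings are read from the blocks $A=S_3^{-1}D_{34}$, $B=S_2^{-1}D_{24}$ and the $(1,4)$-block.

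First I would pin down that nilpotent. Dressing the obvious generators $(Z,0,0,0)$, $(\ast,Z,0,0)$, $(\ast,0,Z,0)$, $(\ast,\ast,\ast,Z)$ by within-block corrections in $\im(D-I)$ (possible since $D-I$ is invertible there), I obtain a basis $\tilde f_1,\dots,\tilde f_4$ of the generalized eigenspace on which the induced nilpotent $N$ acts by $N\tilde f_1=0$, $N\tilde f_2=\lambda_A\tilde f_1$, $N\tilde f_3=\lambda_B\tilde f_1$ and $N\tilde f_4=\kappa\tilde f_1+\lambda_B\tilde f_2+\lambda_A\tilde f_3$, where $\lambda_A$ and $\lambda_B$ are the $Z$-components of $AZ$ and $BZ$ (detected through \eqref{imk} by $\tr(AZ\,V)$ and $\tr(BZ\,V)$, with $V=(B_a)_a$) and $\kappa$ is a further scalar whose value is immaterial here. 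The decisive input is the relation $(BZ)_a=(AZ)_a^{\ast}$ recorded in the proof of Lemma \ref{dim>3}: together with $B_a^{\ast}=B_a$ it gives $\lambda_B=\overline{\lambda_A}$, so $\lambda_A$ and $\lambda_B$ vanish together. Inspecting $\Ker N$, $\Ker N^2$, $\Ker N^3$ then leaves exactly three possibilities: $\lambda_A=\lambda_B=0$ and $\kappa=0$ (so $N=0$, $d=4$); $\lambda_A=\lambda_B=0$ and $\kappa\neq0$ (so $N\neq0=N^2$, $d=3$, top block of size $2$); and $\lambda_A\neq0$, whence $\lambda_B\neq0$ and $N^2\tilde f_4=2\lambda_A\lambda_B\tilde f_1\neq0=N^3$ (so $d=2$, top block of size $3$). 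In particular $d\in\{2,3,4\}$ — the value $d=1$, which would need a single size-$4$ block, is impossible — and the Jordan type is always one block of size $5-d$ together with $d-1$ blocks of size $1$.

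For part BI, the equivalence $d=4\Leftrightarrow\|\phi_\eps^{v_a,v_b}\|_2^2\simeq1/\eps$ is \cite[Theorem 1]{KSS16}, and the tuple $Q_a$ satisfying \eqref{imain} is exactly the one furnished by Lemma \ref{lemq4} together with condition~2) of the Theorem preceding this one. Non-uniqueness is then free of charge: by \eqref{equivK} the equivalence $(K_a)$ solves the homogeneous equation $\w{H}_{ab}K_b=K_a H_{ab}$, so $Q_a+\lambda K_a$ solves \eqref{imain} for every $\lambda\in\C$ — in contrast with case \textbf{AI}, where no such $K$ is available and the solution is unique.

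For part BII the two remaining growth rates follow from the Jordan type above through the correspondence between the size $s$ of the largest Jordan block at $1$ and the asymptotics $\|\phi_\eps^{v_a,v_b}\|_2^2\simeq1/\eps^{\,s}$ underlying the computations of \cite{KSS16}: $d=3$ gives $s=2$ and $1/\eps^2$, while $d=2$ gives $s=3$ and $1/\eps^3$. That no vector satisfies the good vector bound \eqref{GVBo} is argued as in case \textbf{AII} of Theorem \ref{pregressi}, since a good vector would force the asymptotics $\simeq1/\eps$ (by the computation following \eqref{GVBo}), incompatible with an exponent $\ge2$; and no tuple can satisfy \eqref{imain}, because any solution would, after substituting $E_{ab}=Q_aH_{ab}-\w{H}_{ab}Q_b$ into the trace of condition~1) and matching condition~2), force $d=4$. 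The hard part is the nilpotent step, and inside it the exclusion of a $2\oplus2$ splitting when $d=2$: this is precisely what $\lambda_B=\overline{\lambda_A}$ prevents, a $2\oplus2$ type requiring exactly one of $\lambda_A,\lambda_B$ to vanish. The other delicate point, which I would import from \cite{KSS16} rather than reprove, is that the leading Jordan block is genuinely activated by \emph{every} pair of elementary vectors, so that the stated asymptotics hold uniformly in $v_a,v_b$; this rests on the irreducibility of the matrix system.
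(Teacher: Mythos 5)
Your proposal is correct, but it reaches the one nontrivial conclusion of this theorem --- that $d=1$ is impossible --- by a genuinely different route from the paper. The paper's own proof is a two-line analytic argument: a single Jordan block of size four at the eigenvalue $1$ would force $\sum_{|x|=n}|\langle \mu[e,a,v_a],\pi(x)\mu[e,b,v_b]\rangle|^2$ to grow like $n^3$, contradicting Haagerup's inequality \eqref{haagerup}; everything else (the equivalences in \textbf{BI} and \textbf{BII}, the existence and non-uniqueness of the $Q_a$, the absence of good vectors) is delegated to \cite{KSS16} and to the preceding lemmas exactly as you delegate it. Your argument instead stays entirely inside linear algebra: after reducing to the block-triangular form of Lemma \ref{dim>3} you compute the nilpotent $N$ on the four-dimensional generalized eigenspace and use the adjoint symmetry $(BZ)_a=(AZ)_a^{\ast}$, hence $\lambda_B=\overline{\lambda_A}$, to show $N^3\tilde f_4=2\lambda_A\lambda_B\,N\tilde f_1=0$, so $N^3=0$ and no size-four block can occur; this is sound (the positivity $\tr(ZV)=\sum_a\tr(\w{B}_{\inv a}B_a)>0$ you need for the splitting $\Ker(D-I)\oplus\im(D-I)$ does hold), and it buys you strictly more than the paper proves: the complete Jordan type (one block of size $5-d$ plus $d-1$ singletons) and the exclusion of a $2\oplus2$ configuration, with no appeal to temperedness. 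The trade-off is length and one soft spot: your closing claim that any tuple solving \eqref{imain} "would force $d=4$" is asserted rather than derived (one must check that the vector $W$ built from an arbitrary such $Q$ really is fixed by $\mcD_1$, reversing Lemma \ref{lemq4}); since the paper also leaves this to earlier results, it is not a gap relative to the paper's proof, but it deserves a citation rather than a gesture.
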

\begin{proof} The only thing to prove is that the dimension
  of the eigenspace of $1$ cannot be $1$. If this were the case one should have a cycle
of length four in the generalized eigenspace of $1$. This implies that the growth of the quantity is
$$
 \sum_{|x|=n}
{|\langle \mu[e,a,v_a],\pi(x)\mu[e,b,v_b]\rangle  |^2}
e^{-\eps|x|}
\simeq n^3
$$
which contradicts Haagerup's inequality~\eqref{haagerup}.
\end{proof}

In \cite{KSS16}  we proved
that all the representations arising from \textbf{AII}  satisfy {monotony}.
The calculation and the techniques  presented in \cite{KSS16} can be carried
out virtually unchanged to prove that
also the representations arising from \textbf{BII} satisfy {monotony}.

This paper is devoted to the study of cases \textbf{AI, BI}.

\section{Equivalent  multiplicative $\Gamma$-representations}\label{equivG}

In this section the reigning hypotheses are as follows:

$\bullet$\;\; $\pi$ and
$\widehat{\pi}$ are multiplicative representations  built up from
  irreducible twin systems $(V_a,H_{b a},B_a)$
  and $(\widehat{V}_a,\widehat{H}_{b a},\widehat{B}_a)$: this implies that $\pi$ and
  $\widehat{\pi}$ are irreducible as representations of $\cp$
  \cite[Theorem 5.3]{KS04};

$\bullet$\;\;   For every $a,b,\in A$ and $v_a,v_b\in V_a,V_b$, one has \eqref{crescita1/eps}
and hence there exists a tuple of linear maps
$Q_a:V_a\to\widehat{V}_a$ satisfying:
\begin{equation}\label{main}
  {\widehat{H}}_{ab}Q_b+ E_{ab}=Q_a H_{ab},\;  {a,b\in A}.
\end{equation}

We begin with the following
\begin{lem}\label{qq}
  The maps $Q_a:V_a\to \widehat{V}_a$ appearing in  \eqref{main} also  satisfy
$$
  Q_a^{\ast}+Q_{a^{-1}}=0,\quad a\in A
$$
\end{lem}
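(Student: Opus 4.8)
The plan is to read off the antisymmetry directly from the defining relation \eqref{main} by taking adjoints and relabelling the indices $a,b\mapsto\inv b,\inv a$, and then to deal with the non-uniqueness of the tuple $(Q_a)$ in case \textbf{BI} by a symmetrization argument modelled on Remark \ref{Kauto}. The whole computation rests on a duality symmetry of the maps $E_{ab}$, which I would isolate first.

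First I would record the elementary adjoint identities forced by the twin construction: $B_a^{\ast}=B_a$, and, since $\w{H}_{ba}=H^{\ast}_{\inv a\inv b}$ together with $V^{\ast\ast}=V$, also $\w{H}_{ab}^{\ast}=H_{\inv b\inv a}$ and $H_{ab}^{\ast}=\w{H}_{\inv b\inv a}$. The computational heart is the symmetry
$$E_{ab}^{\ast}=E_{\inv b\inv a}.$$
This I would obtain by writing $E_{ab}=\sum_{c\neq a,\inv b}H^{\ast}_{c\inv a}B_cH_{cb}$, taking the adjoint termwise (using $B_c^{\ast}=B_c$) to get $\sum_{c\neq a,\inv b}H^{\ast}_{cb}B_cH_{c\inv a}$, and checking that this is exactly the defining expression for $E_{\inv b\inv a}$, the index constraints $c\neq a,\inv b$ matching on the nose once one computes that the forbidden indices for $E_{\inv b\inv a}$ are again $a$ and $\inv b$.

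Next I would take the adjoint of \eqref{main}, namely of $\w{H}_{ab}Q_b+E_{ab}=Q_aH_{ab}$, and insert the identities above to obtain, for all $a,b$,
$$Q_b^{\ast}H_{\inv b\inv a}+E_{\inv b\inv a}=\w{H}_{\inv b\inv a}Q_a^{\ast}.$$
I would then set $R_a:=-Q_{\inv a}^{\ast}$; since $Q_{\inv a}\colon V_{\inv a}\to\w{V}_{\inv a}=V_a^{\ast}$, its adjoint sends $V_a=V_a^{\ast\ast}\to V_{\inv a}^{\ast}=\w{V}_a$, so $R_a\colon V_a\to\w{V}_a$ has the correct type. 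Reading the displayed adjoint equation with $(a,b)$ replaced by $(\inv b,\inv a)$ shows it is precisely the statement that the tuple $(R_a)$ again solves \eqref{main}. In other words $Q\mapsto(-Q_{\inv a}^{\ast})_a$ is an involution of the affine solution set of \eqref{main}, and the asserted identity $Q_a^{\ast}+Q_{\inv a}=0$ is exactly the statement that $(Q_a)$ is a fixed point of this involution.

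Finally I would conclude. In case \textbf{AI} the solution of \eqref{main} is unique (Theorem \ref{pregressi}), hence $R=Q$, giving $Q_a=-Q_{\inv a}^{\ast}$ and, upon passing to adjoints, the claim. In case \textbf{BI} the solution is not unique, so I would instead replace $(Q_a)$ by the average $\tfrac12(Q_a+R_a)=\tfrac12\!\left(Q_a-Q_{\inv a}^{\ast}\right)$: because \eqref{main} is affine this is again a solution, and a one-line check shows it is fixed by the involution, hence satisfies $Q_a^{\ast}+Q_{\inv a}=0$. The main obstacle is precisely this non-uniqueness in \textbf{BI} — indeed adding to $Q$ a multiple of an equivalence $(K_a)$ normalized as in Remark \ref{Kauto} (so that $K_a^{\ast}=K_{\inv a}$, which the involution sends to $-K$) produces another solution of \eqref{main} that fails the antisymmetry. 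Thus the lemma must be read as selecting the symmetric choice guaranteed by the averaging step, exactly as the self-adjoint normalization of the $K_a$ is arranged in Remark \ref{Kauto}.
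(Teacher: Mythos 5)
Your proposal is correct and follows essentially the same route as the paper: take the adjoint of \eqref{main}, relabel $(a,b)\mapsto(\inv b,\inv a)$ using $E_{ab}^{\ast}=E_{\inv b\inv a}$ to see that $(-Q_{\inv a}^{\ast})$ is again a solution, then invoke uniqueness in case \textbf{AI} (the paper phrases this via the Schur-type Remark~3.4 of \cite{KS04}, which is the same fact) and pass to the symmetrization $\tfrac12(Q_a-Q_{\inv a}^{\ast})$ in case \textbf{BI}. Your framing as an involution on the affine solution set, and your explicit caveat that in \textbf{BI} the lemma selects the symmetric representative, are faithful elaborations of what the paper does.
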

\begin{proof}
  Take the adjoint in
${\widehat{H}}_{ab}Q_b+ E_{ab}=Q_a H_{ab},$
 and write $\inv b$ for $a$ and $\inv a$ for $b$:
$$Q_{a^{-1}}^{\ast}{\widehat{H}}_{{b^{-1}}{a^{-1}}}^{\ast}+
 E_{{b^{-1}}{a^{-1}}}^{\ast}=H_{{b^{-1}}{a^{-1}}}^{\ast}Q_{b^{-1}}^{\ast}.$$
 Remember that $E^*_{\inv b\inv a}=E_{ab}$ and write
   \begin{align*}
& \widehat{H}_{ab}Q_{b^{-1}}^{\ast}=Q_{a^{-1}}^{\ast}{{H}}_{a b}+
E_{a b}\\
&{\widehat{H}}_{ab}Q_b+ E_{ab}=Q_a H_{ab}
\end{align*}

Adding up to the two  equations gives
$${\widehat{H}}_{ab}(Q_b+Q_{b^{-1}}^{\ast})=
(Q_a+Q_{a^{-1}}^{\ast}) H_{ab},\;  {a,b\in A}.$$
If the two systems are inequivalent $(Q_a+Q_{a^{-1}}^{\ast})$
  must be zero
  by Remark 3.4 in \cite{KS04}.
  If the two systems are equivalent one can replace
  $Q_a$ by $\tilde{Q}_a=\frac12(Q_a-Q_{\inv a}^*)$.
\end{proof}
%
%
%
%

\begin{thm}\label{equi}
  Assume the existence of
  a tuple of linear maps
$Q_a:V_a\to\widehat{V}_a$ satisfying \eqref{main}.

Then there exists a linear bijection $J:\mcH^\infty\to\w{\mcH}^\infty$
that intertwines
 $\pi$ to $\widehat{\pi}$.
\end{thm}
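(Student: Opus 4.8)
The plan is to define $J$ first on the elementary generators and then propagate it by the group action. Since $\pi(x)\mu[e,a,v_a]=\mu[x,xa,v_a]$, any $\G$-intertwiner is forced to satisfy $J\mu[x,xa,v_a]=\w{\pi}(x)\,J\mu[e,a,v_a]$, and I take this as the \emph{definition}; thus $J$ is determined once the finitely many linear maps $v_a\mapsto J\mu[e,a,v_a]$ ($a\in A$) are fixed. With this recipe the intertwining relation $J\pi(y)=\w{\pi}(y)J$ holds automatically on the spanning set $\{\mu[x,xa,v_a]\}$, hence on all of $\mcH^\infty$, so the entire content of the theorem is pushed into (i) the correct choice of $J\mu[e,a,v_a]$, (ii) well-definedness, and (iii) bijectivity.

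The leading term of $J\mu[e,a,v_a]$ must be the twin elementary function $\w{\mu}[e,a,Q_a v_a]$. The naive choice $J\mu[e,a,v_a]=\w{\mu}[e,a,Q_a v_a]$ is, however, inconsistent: refining $\mu[e,a,v_a]\equiv\sum_{c\ne\inv a}\mu[a,ac,H_{ca}v_a]$ as in \eqref{effe} and re-applying the recipe produces a discrepancy governed by $E_{ca}$, precisely because \eqref{main} reads $Q_c H_{ca}=\w{H}_{ca}Q_a+E_{ca}$ and not $Q_c H_{ca}=\w{H}_{ca}Q_a$. I will therefore add to $\w{\mu}[e,a,Q_a v_a]$ a correction, itself a finite combination of twin multiplicative functions assembled from the maps $E_{ca}$ and the forms $B_c$, engineered so that the $E$-discrepancy created by one step of refinement is reproduced exactly by the correction. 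Equation \eqref{main} is the identity that makes this bookkeeping close up.

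The heart of the argument is then the verification that $J$ is well defined on $\mcH^\infty$, i.e.\ that the assignment is compatible with the only relations present, namely the refinements \eqref{effe} (equivalently, the identification of functions differing on a finite set). This reduces to a single computation in which the $E$-defects telescope, the inputs being only \eqref{main} and the compatibility condition \eqref{compatibility}; carrying this telescoping out uniformly over all vertices and all refinement depths is the step I expect to be the main obstacle. A secondary subtlety arises in the equivalent-systems case, where the tuple $(Q_a)$ is not unique: I first normalise it by Lemma \ref{qq} to the choice with $Q_a^{\ast}=-Q_{\inv a}$, which is what renders the correction canonical.

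Finally, for bijectivity I do not invoke Schur's lemma, since $\pi$ is not yet known to be $\G$-irreducible; instead I exhibit a two-sided inverse. The twin of the twin system is canonically the original system, and by Lemma \ref{qq} the normalised data (built from the adjoint $-Q^{\ast}$, via $Q_a^{\ast}=-Q_{\inv a}$) satisfies the analogue of \eqref{main} for the reversed pair. Running the identical construction backwards yields a map $\w{J}:\w{\mcH}^\infty\to\mcH^\infty$, and I then check $\w{J}\circ J=\Id$ and $J\circ\w{J}=\Id$ on the generators, a computation that again uses only \eqref{main}, \eqref{compatibility}, and the identity $Q_a^{\ast}=-Q_{\inv a}$. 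This produces the desired linear bijection $J$ intertwining $\pi$ and $\w{\pi}$.
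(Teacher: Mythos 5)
Your overall architecture (define $J$ on the elementary functions $\mu[e,a,v_a]$, propagate by the group action so that intertwining is automatic, reduce well-definedness to one refinement step controlled by \eqref{main} and \eqref{compatibility}) is the same as the paper's, but two essential pieces are missing or wrong. First, you never produce the correction term, and your description of it is off target. The paper's formula is
\begin{equation*}
J(\mu[x,xa,v_a])=\widehat{\mu}[x,xa,-Q_av_a]+\widehat{\mu}[xa,x,B_av_a],
\end{equation*}
where the second summand is a \emph{single} twin multiplicative function supported on the complement $\G(xa,x)$ of $\G(x,xa)$, with initial value $B_av_a\in V_a^{\ast}=\w{V}_{\inv a}$; the maps $E_{ab}$ do not enter the definition at all, only the verification (the term $\sum_{b\neq c,\inv a}\w{H}_{c\inv b}B_bH_{ba}=E_{ca}$ is what the backward-supported piece contributes upon refinement, and \eqref{main} absorbs it). Identifying that the correction lives on the \emph{other side} of the edge $(x,xa)$ is the creative step of the construction, and without the explicit formula the telescoping you describe cannot actually be carried out.

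Second, your bijectivity argument does not work as stated. Lemma \ref{qq} gives $Q_a^{\ast}=-Q_{\inv a}$, and since $Q_a:V_a\to\w{V}_a=V_{\inv a}^{\ast}$, the adjoint $Q_a^{\ast}$ is again a map $V_{\inv a}\to\w{V}_{\inv a}$ — it points in the \emph{same} direction as the $Q$'s, not from $\w{V}_a$ back to $V_a$. So the tuple $-Q^{\ast}$ cannot serve as the data $\w{Q}_a:\w{V}_a\to V_a$ needed to run the construction backwards, and the existence of such reversed data satisfying the twin analogue of \eqref{main} is not something you may assume; in the paper it is \emph{deduced} from the invertibility of $J$ (via Proposition \ref{matricegenerale} applied to the twin system), not used to prove it. The paper instead proves surjectivity by exhibiting an explicit right inverse \eqref{invmatJ} for the $2\times2$ block matrix
$\bigl(\begin{smallmatrix}-Q_a & B_{\inv a}\\ B_a & -Q_{\inv a}\end{smallmatrix}\bigr)$,
whose entries are well defined because $B_a+Q_{\inv a}B_{\inv a}^{-1}Q_{\inv a}^{\ast}$ is positive definite, and then gets injectivity from a dimension count on the finite-dimensional subspaces $W_n$ of \eqref{effe}. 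You need some argument of this kind; the ``twin of the twin'' symmetry alone does not supply the inverse.
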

\begin{proof}

  We shall first define $J$
  for functions $\mu[x,xa,v_a]$ with $|xa|\!=\!|x|+1$.

Recall that the tree decomposes as the disjoint union
of the sets $\G(x,xa)$ and $\G(xa,x)$.
Write $\mu[xa,x, v_{\inv a}]$
for $\mu[xa,xa\inv a, v_{\inv a}]$
 and observe that the maps $\mu[x,xa, v_{ a}]$ and $\mu[xa,x, v_{\inv a}]$ are
  orthogonal.

  For all $a\in A$ and $v_a\in V_a$ define
\begin{equation}\label{J}
J(\mu[x,xa,v_a])=\widehat{\mu}[x,xa,-Q_av_a]+\widehat{\mu}[x a,x,B_av_a],
\end{equation}
where $\widehat{\mu}$ is the multiplicative function constructed
from the system $(\w{V}_a,\w{H}_{ab},\w{B}_a)$ according to \eqref{mu}.
Note that  $Q_av_a\in\widehat{V}_a$, $B_av_a\in V_a^{\ast}=\widehat{V}_{a^{-1}}$.

Since
$\pi(y)\mu[x,xa,v_a]=\mu[yx,yxa,v_a]$  regardless of whether $yx$ or $xa$
are reduced or not, it is clear that
 $J$ will intertwine $\pi$ to $\w{\pi}$.

%
To see that
$J$ is well defined write
$$
\mu[e,a,v_a]=\sum_{b\neq\inv a}\mu[a,ab,H_{ba}v_a]
$$
and compute
$$J(\sum_{b\neq\inv a}\mu[a,ab,H_{ba}v_a])=
  \sum_{b\neq a^{-1}}{
\left(\widehat{\mu}[a,ab,-Q_b H_{ba}v_a]+
\widehat{\mu}[ab,a,B_b H_{ba}v_a]\right)}.
$$
We need
\begin{equation}\label{intrallaccia1}
  J(\mu[e,a,v_a])=\widehat{\mu}[e,a,-Q_av_a]+\widehat{\mu}[a,e,B_av_a]
\end{equation}
\begin{equation}\label{intrallaccia2}
 = \sum_{b\neq a^{-1}}{
\left(\widehat{\mu}[a,ab,-Q_b H_{ba}v_a]+
\widehat{\mu}[ab,a,B_b H_{ba}v_a]\right)}\,.
\end{equation}

Compute \eqref{intrallaccia1} at $x=e$ to get
$B_av_a$, while \eqref{intrallaccia2} evaluated at $e$ gives
\begin{equation*}
  \sum_{b\neq\inv a} \widehat{H}_{\inv a\inv b}B_bH_{ba}v_a=
    \sum_{b\neq\inv a} H^*_{ba}B_bH_{ba} v_a,
\end{equation*}
they are equal since $(B_a)$ is a right eigenvector
of the matrix $(H^*_{ba}~\otimes~H'_{ba})_{a,b}$.
Proceed now to compute \eqref{intrallaccia1} at $x=ac$: we get
 $-\widehat{H}_{ca}Q_av_a$ while \eqref{intrallaccia2}  gives
$$
-Q_c H_{ca}v_a+\sum_{b\neq c,\inv a}\w{H}_{c\inv b}B_bH_{ba}v_a\,.
$$
Equality holds if and only if
\begin{equation*}
     Q_c H_{ca}=\Hcap_{ca}Q_a\;+\;\sum_{b\neq c, \inv a}\w{H}_{c\inv b}B_bH_{ba},
\end{equation*}
which is \eqref{main} where we wrote $a$ for $c$, $b$ for $a$ and $c$ for $b$.

We shall now provide the expression for $J(\mu[a,e,v_{\inv a}])$.
Observe that
\begin{eqnarray*}
J(\pi(\inv a)\mu[e,a,v_a])&=&\widehat{\pi}(\inv a)J\mu[e,a,v_a]\\
  &=&\widehat{\mu}[\inv a, e, -Q_av_a]+\widehat{\mu}[e,\inv a, B_av_a],
\end{eqnarray*}
write now $a$ for $\inv a$: we get
\begin{equation*}
  J\mu[a,e,v_{\inv a}]=\widehat{\mu}[e,a, B_{\inv a}v_{\inv a}]+\widehat{\mu}[a, e, -Q_{\inv a}v_{\inv a}].
\end{equation*}

Identify $V_a\oplus V_{\inv a}$ with the space of multiplicative functions of the form
$\mu[e,a,v_a]+\mu[\inv a,e, v_{\inv a}].$

With this identification  we may think of $J$ as acting
on each $V_a\oplus V_{\inv a}$ via the matrix
\begin{equation}\label{matJ}
\left(
\begin{array}{cc}
-Q_a &B_{a^{-1}}\\
B_a & -Q_{a^{-1}}
    \end{array}
    \right)
    :V_a\oplus V_{a^{-1}}\rightarrow \w{V}_a\oplus \w{V}_{a^{-1}}\,,
    \end{equation}
where $\w{V}_a\oplus \w{V}_{a^{-1}}$ is identified with the space of functions
of the form $\w{\mu}[e,a,v_a]+\w{\mu}[\inv a,e, v_{\inv a}]$.
Moreover, the matrix \eqref{matJ}
has as right inverse

\begin{equation}\label{invmatJ}
\left(\!\!\!
\begin{array}{cc}
-B_a^{-1}Q_a^{\ast}(B_{a^{-1}} +Q_{a}B_{a}^{-1}Q_{a}^{\ast})^{-1}\!&\!\!(B_a +Q_{a^{-1}}B_{a^{-1}}^{-1}Q_{a^{-1}}^{\ast})^{-1}\\ \\
(B_{a^{-1}} +Q_{a}B_{a}^{-1}Q_{a}^{\ast})^{-1}\!&\!\!-B_{a^{-1}}^{-1}Q_{a^{-1}}^{\ast}
(B_{a}+Q_{a^{-1}}B_{a^{-1}}^{-1}Q_{a^{-1}}^{\ast})^{-1}
    \end{array}\!\!\!
    \right)\!.
    \end{equation}
\vspace{1.5mm}

Indeed, a simple multiplication and Lemma \ref{qq} show that it suffices
    to prove that the  matrix \eqref{invmatJ} is well defined, i.e.
    $B_a+Q_{\inv a} B_{\inv a}^{-1} Q_{\inv a}^{\ast}$ is
    invertible. But the latter is positive definite, hence invertible.

Extend now $J$ by linearity to
  $\mcH^{\infty}$ by \eqref{effe}.

    We shall now prove
that $J$ is invertible from $\mcH^\infty\to\w{\mcH}^\infty$.


To prove  
surjectivity is sufficient,
by linearity and the intertwining property, to check that all the functions
of the type $\w{\mu}[e,a,\hat{v}_a]+\w{\mu}[a,e,\hat{v}_{\inv a}]$ are in the image of $J$.

By \eqref{invmatJ} we know that, given
$\hat{v}_a\in\w{V}_a$ and $\hat{v}_{\inv a}\in\w{V}_{\inv a}$,
there exist $w_a\in V_a ,$ $w_{\inv a}\in V_{\inv a}$ such that
$$\left(
\begin{array}{cc}
-Q_a &B_{a^{-1}}\\
B_a & -Q_{a^{-1}}
    \end{array}
    \right)
     \left(
\begin{array}{c}
w_a \\
w_{\inv a}
    \end{array}
    \right)
    = \left(\begin{array}{c}
\hat{v}_a \\
\hat{v}_{\inv a}
\end{array}
    \right),$$
and so
\begin{equation*}
J(\mu[e,a,w_a]+\mu[a,e,w_{\inv a}])=\widehat{\mu}[e,a,\hat{v}_a]+\widehat{\mu}[a,e,\hat{v}_{\inv a}]\;.
 \end{equation*}

We proof that $J$ is injective as follows. For any $n\in\N$ let $W_n$ be the finite dimensional
subspace of $\mcH^{\infty}$ consisting of multiplicative functions of the form
$$
f=\sum_{|x|=n}\sum_{\atopn{a\in A}{|xa|=|x|+1}}
{\mu[x,x a,f(xa)]}.
$$
Let $J_n$ be the
restriction of $J$ on this subspace. It is easy to see that $J_n:W_n
\rightarrow \w{W}_n$ is onto, therefore $J_n$ is also
one-to-one. Now, let $J f=0$ for some multiplicative
function $f$. By \eqref{effe} there exists
 $n$  such that  $f\in W_n$ and $J_n f=0,$ therefore $f=0$ and the theorem is proved.
\end{proof}
%
%
%
Incidentally we have proved the following
\begin{prop}\label{matricegenerale}
Any intertwiner $T$ between $\pi$ and $\picap$
which sends functions of the form
$\mu[e,a,v_a]+\mu[\inv a,e, v_{\inv a}]$ into functions of the same type lying in $\w{\mcH}^\infty,$
must be given by a matrix
\begin{equation*}
\left(
\begin{array}{cc}
X_a &Y_{a^{-1}}\\
Y_a & X_{a^{-1}}
    \end{array}
\right)
\end{equation*}
where  $X_a:V_a\rightarrow \w{V}_a,$
$Y_a:V_a\rightarrow \w{V}_{\inv a},$ must be chosen so that
\begin{eqnarray*}
 & Y_a=\lambda B_a \quad \mbox{for some  $\lambda\in\C,$}\\
  & X_a H_{ab}+\lambda E_{ab}=\w{H}_{ab}X_b\,.
  \end{eqnarray*}

In particular the maps $-X_a$ must satisfy \eqref{main} with respect to the
choice of  $\lambda B_a$ .
\end{prop}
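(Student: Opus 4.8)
The plan is to analyze an arbitrary intertwiner $T\colon\mcH^\infty\to\w{\mcH}^\infty$ of the stated form by extracting its action on the building blocks $\mu[e,a,v_a]$ and $\mu[\inv a,e,v_{\inv a}]$, writing that action as a block matrix $\bigl(\begin{smallmatrix} X_a & Y_{\inv a}\\ Y_a & X_{\inv a}\end{smallmatrix}\bigr)$ on $V_a\oplus V_{\inv a}$, and then forcing the intertwining relation $T\pi=\w\pi T$ to pin down $X_a$ and $Y_a$. Since the computation in Theorem~\ref{equi} already extracted exactly which identities the entries of such a matrix must satisfy, the work here is essentially to re-run that same consistency check with general entries $X_a,Y_a$ in place of the specific choices $-Q_a,B_a$.

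First I would repeat the decomposition used in the proof of Theorem~\ref{equi}: write $\mu[e,a,v_a]=\sum_{b\neq\inv a}\mu[a,ab,H_{ba}v_a]$ and demand that $T$ be well defined, i.e.\ that evaluating $T(\mu[e,a,v_a])$ via the two expressions agree at every vertex. Evaluating at $x=e$ will force a compatibility of the $Y_a$ with the eigenvector equation \eqref{ti} for $(B_a)$; because $(B_a)$ spans the relevant one-dimensional eigenspace (the uniqueness from irreducibility, cf.\ the discussion around \eqref{imk}), this should yield $Y_a=\lambda B_a$ for a single scalar $\lambda\in\C$ — the same scalar for all $a$, the key point being that the constant cannot vary with $a$ by irreducibility. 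Evaluating instead at $x=ac$ will reproduce the relation $X_aH_{ab}+\lambda E_{ab}=\w{H}_{ab}X_b$, exactly as the analogous step in Theorem~\ref{equi} produced \eqref{main}.

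The final sentence, that $-X_a$ satisfies \eqref{main} with respect to $\lambda B_a$, is then just a rewriting: the relation $X_aH_{ab}+\lambda E_{ab}=\w{H}_{ab}X_b$ becomes $\w{H}_{ab}(-X_b)+\lambda E_{ab}=(-X_a)H_{ab}$ after moving terms, which is \eqref{main} with $Q$ replaced by $-X$ and $E$ scaled by $\lambda$. I would emphasize that the phrase ``incidentally we have proved'' signals that no new argument is needed: the entire content is latent in the vertex-by-vertex matching already performed for $J$, the only generalization being that $-Q_a$ and $B_a$ are replaced by free entries $X_a$ and $\lambda B_a$.

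The main obstacle — really the only subtle point — is justifying that $Y_a$ must be a \emph{scalar} multiple of $B_a$ with a scalar \emph{independent of $a$}. The $a$-dependence is controlled by the same irreducibility/uniqueness argument that gives a one-dimensional kernel in \eqref{imk}, but one must check that the well-definedness condition at $x=e$ genuinely couples the different $Y_a$ through the $H_{ba}$ so that a single global scalar emerges, rather than allowing an independent scalar on each generator. I would handle this by observing that the $x=e$ equation is precisely the statement that the tuple $(Y_a)$ is a right eigenvector of $(H^*_{ba}\otimes H'_{ba})_{a,b}$ for eigenvalue $1$, whose eigenspace is one-dimensional and spanned by $(B_a)$ by the uniqueness result of \cite{KS04}; this forces $(Y_a)=\lambda(B_a)$ with one common $\lambda$.
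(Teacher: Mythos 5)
Your proposal is correct and follows essentially the same route as the paper, which simply says to ``proceed as in the proof of Theorem \ref{equi}'': decompose $\mu[e,a,v_a]=\sum_{b\neq\inv a}\mu[a,ab,H_{ba}v_a]$, apply $T$, and match values at $e$ and at $ac$. Your explicit justification that the $x=e$ condition makes $(Y_a)$ a right eigenvector of $(H^*_{ba}\otimes H'_{ba})_{a,b}$ for the eigenvalue $1$, whose eigenspace is one-dimensional and spanned by $(B_a)$ (cf.\ the discussion preceding \eqref{imk}), correctly fills in the one step the paper leaves implicit, namely that the scalar $\lambda$ is global rather than depending on $a$.
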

\begin{proof}
Assume that $J(\mu[e,a,v_a])=\widehat{\mu}[e,a,X_av_a]+\widehat{\mu}[a,e,Y_av_a]$
and   proceed as in the proof of Theorem \ref{equi} to see that
\begin{equation*}
  J(\mu[e,a,v_a])=\widehat{\mu}[e,a,X_av_a]+\widehat{\mu}[a,e,Y_av_a]
  \end{equation*}
    {must be equal to}
    \begin{equation*}
  \sum_{b\neq a^{-1}}{
\left(\widehat{\mu}[a,ab,X_b H_{ba}v_a]+
\widehat{\mu}[ab,a,Y_b H_{ba}v_a]\right)}\;.
\end{equation*}
The two expressions will be equal if and only if
$Y_a$ is a multiple of $B_a$, say $\lambda B_a$, and
the maps $-X_a$  satisfy \eqref{main} with respect to the
choice of  $\lambda B_a$.

If the two systems   $(V_a,H_{b a},B_a)$
  and $(\widehat{V}_a,\widehat{H}_{b a},\widehat{B}_a)$
 {are not}
 equivalent then
  there is
 only one possibility for the maps $Q_a$ and hence the matrix for $T$ will be
\begin{equation*}
\left(
\begin{array}{cc}
-\lambda Q_a &\lambda B_{a^{-1}}\\
\lambda B_a & -\lambda Q_a^{\ast}\\
\end{array}    \right).
\end{equation*}


If the two systems   $(V_a,H_{b a},B_a)$
and $(\widehat{V}_a,\widehat{H}_{b a},\widehat{B}_a)$ \textbf{are } equivalent
then there exists a tuple of linear maps $K_a:V_a\to\w{V}_a$ so that
\begin{equation*}
  K_a H_{ab}=\w{H}_{ab}K_b
\end{equation*}
and this will increase the possibilities for the $Q_a$. Having in mind that
we want to keep the condition $Q_a^\ast + Q_{\inv a}=0$, observe that
we may always assume that $K_a^\ast=K_{\inv a}$ (see Remark \ref{Kauto}).

In this case, for a fixed given choice of  $\lambda B_a$
all the possible choices for $X_a$ satisfying $X_a^\ast +X_{\inv a}=0$
are of the form
$$
X_a=-\lambda Q_a +i c K_a\quad\mbox{for real $c$},
$$
and the matrix for $T$ will be
\begin{equation*}
\left(
\begin{array}{cc}
-\lambda Q_a +icK_a &\lambda B_{a^{-1}}\\
\lambda B_a & -\lambda Q_a^{\ast}+icK^*_a\\
\end{array}    \right).
\end{equation*}
\end{proof}

We are now ready to prove the following
\begin{thm}
  Assume the existence of
  a tuple of linear maps
$Q_a:V_a\to\widehat{V}_a$ satisfying \eqref{main}.

Let $J$ be as in~\eqref{J}. Then $J:\mcH^\infty\to\w{\mcH}^\infty$
preserves the inner product and hence
it extends to a unitary equivalence between $\pi$ and $\w\pi$.
\end{thm}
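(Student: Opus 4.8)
We must show that the intertwiner $J$ defined in \eqref{J} is an isometry: for all $f,g\in\mcH^\infty$ one has $\langle Jf,Jg\rangle_{\w\mcH}=\langle f,g\rangle_\mcH$. Since $J$ has already been shown (in Theorem \ref{equi}) to be a linear bijection intertwining $\pi$ and $\w\pi$, once we know it preserves the inner product it extends by continuity to a unitary equivalence of the two $\G$-representations, as claimed.

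The plan is to reduce the isometry check to a finite-dimensional computation on the building blocks $\mu[e,a,v_a]+\mu[\inv a,e,v_{\inv a}]$, using the structure already laid out in the proof of Theorem \ref{equi}. First I would exploit that $J$ is intertwining: because $\pi$ and $\w\pi$ are unitary and $J\pi(y)=\w\pi(y)J$, it suffices to verify the inner-product identity on one copy of each $V_a\oplus V_{\inv a}$ rather than on all of $\mcH^\infty$. Concretely, recall the decomposition \eqref{effe}, under which an arbitrary $f\in\mcH^\infty$ lives in some $W_n$ and is an orthogonal combination of translates of the basic functions $\mu[x,xa,v_a]$ with $|xa|=|x|+1$; the functions $\mu[x,xa,v_a]$ and $\mu[xa,x,v_{\inv a}]$ are orthogonal, and by translation one is reduced to the edge at the origin. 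So the entire claim collapses to checking that the matrix \eqref{matJ}
$$
\begin{pmatrix} -Q_a & B_{\inv a}\\ B_a & -Q_{\inv a}\end{pmatrix}
:V_a\oplus V_{\inv a}\to \w V_a\oplus \w V_{\inv a}
$$
is unitary with respect to the inner products that $B_a\oplus B_{\inv a}$ and $\w B_a\oplus \w B_{\inv a}$ induce on source and target.

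The core computation is therefore to expand $\langle J(\mu[e,a,v_a]+\mu[\inv a,e,v_{\inv a}]),\,J(\mu[e,a,w_a]+\mu[\inv a,e,w_{\inv a}])\rangle$ and match it against $B_a(v_a,w_a)+B_{\inv a}(v_{\inv a},w_{\inv a})$. Here I would use that $\w\mu[e,a,\cdot]$ and $\w\mu[a,e,\cdot]$ are orthogonal in $\w\mcH^\infty$, with inner products governed by $\w B_a$ and $\w B_{\inv a}$ respectively, so the computation becomes the assertion that the matrix \eqref{matJ} carries the form $\mathrm{diag}(B_a,B_{\inv a})$ to $\mathrm{diag}(\w B_a,\w B_{\inv a})$. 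The identities needed to make the cross terms cancel and the diagonal terms balance are exactly the relation $Q_a^\ast+Q_{\inv a}=0$ from Lemma \ref{qq}, the definition $\w B_a=$ the twin form, and the compatibility relation \eqref{main} together with the fact that the $E_{ab}$ account for the "missing" directions $b\neq a,\inv a$. I expect the cleanest route is to verify $\langle Ju,Jv\rangle=\langle u,v\rangle$ for $u,v$ basic by computing the off-diagonal products $\w B_a(-Q_a v_a, B_{\inv a}w_{\inv a})$ and its partner and showing they cancel via $Q_a^\ast=-Q_{\inv a}$, while the diagonal products reassemble to $B_a(v_a,w_a)$ using the twin compatibility.

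The main obstacle is the diagonal bookkeeping: showing that $\w B_a(Q_a v_a, Q_a w_a)+\w B_{\inv a}(B_a v_a, B_a w_a)=B_a(v_a,w_a)$ (and the symmetric statement at $\inv a$). This is where the defining property of the twin forms $\w B_a$ and the compatibility relation \eqref{main} must be combined in just the right way; in spirit it is a "Pythagorean" identity saying that the two terms $-Q_av_a$ and $B_av_a$ split the norm of $v_a$ exactly. I anticipate that this identity follows by the same trace manipulation already invoked in the proof of the preceding Theorem (the computation showing condition~2) is equivalent to the vanishing trace expression) and, ultimately, from the fact that $(\w B_a)$ is the eigenvector giving the twin inner product; rather than recompute, I would cite the relevant trace identity and reduce the isometry to it, leaving only a short verification that the cross terms vanish by Lemma \ref{qq}.
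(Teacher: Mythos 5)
There is a genuine gap in your reduction. You claim that, by intertwining and translation, the whole isometry statement ``collapses to checking that the matrix \eqref{matJ} is unitary'' on a single copy of $V_a\oplus V_{\inv a}$. That is not enough, because $J$ does not respect the support decomposition of the tree: $J(\mu[x,xa,v_a])=\w\mu[x,xa,-Q_av_a]+\w\mu[xa,x,B_av_a]$, and the second summand is supported on $\G(xa,x)$, i.e.\ on the \emph{complement} of $\G(x,xa)$. Consequently, if $\mu[x,xa,v_a]$ and $\mu[y,yb,w_b]$ sit on two distinct edges (so that they are orthogonal), their images under $J$ have overlapping supports and are not obviously orthogonal; the cross terms do not vanish for trivial support reasons. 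The paper therefore has to verify a second, harder case, namely $\langle J\mu[a_1,e,w_{\inv a_1}],\,J\mu[x,xa,v_a]\rangle=0$ with $a_1$ the first letter of $x$ and $xa$ reduced. This requires expanding three nonzero inner products along the geodesic from $e$ to $xa$ and reducing them, via \eqref{serviranno3}, \eqref{serviranno4} and Lemma \ref{qq}, to the telescoping identity \eqref{fin}, which is then proved by induction on $n$ using the twin relation \eqref{main2}. Your proposal omits this entirely, and it is the bulk of the actual proof.

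A secondary, smaller issue: even for the single-edge case you handle, the ``Pythagorean'' identity $B_a=B_a\w B_{\inv a}B_a+Q_a^*\w B_aQ_a$ is not obtained in the paper by a trace manipulation or directly from the eigenvector property of $(\w B_a)$. It comes from first identifying the form of $J^{-1}$: since $J^{-1}$ intertwines $\w\pi$ with $\pi$ and the twin of the twin is the original system, Proposition \ref{matricegenerale} applied to the twin system forces $J^{-1}$ to be given by the matrix \eqref{invJ} with maps $\w Q_a$ satisfying \eqref{main2}; the left-inverse relation then yields \eqref{serviranno}--\eqref{serviranno4}, and these are exactly what make both the diagonal terms balance and the cross terms cancel. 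You would need to supply this step (or an equivalent one) rather than defer to ``the defining property of the twin forms.''
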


\begin{proof}
  Let $Q_a$ be a tuple of maps satisfying \eqref{main}.
  Construct $J$ as in Theorem \ref{equi} and let
  \begin{equation*} 
\left(
\begin{array}{cc}
-Q_a &B_{a^{-1}}\\
B_a & -Q_{a^{-1}}
    \end{array}
    \right)
\end{equation*}
be the matrix representing it.

At the algebraic level we know that
 $J$ is invertible and intertwines $\pi$ to
$\widehat{\pi},$ from $\mcH^\infty$ to $\w\mcH^\infty$. Since
the twin of the twin system is the original system and
$J^{-1}:\w{\mcH}^{\infty}\rightarrow {\mcH}^{\infty}$ intertwines
the representations  as well, Proposition~\ref{matricegenerale}
applied to the twin system, says that
the matrix \eqref{invmatJ} for $J^{-1}$ must be of the form
\begin{equation}\label{invJ}
\left(
\begin{array}{cc}
-\w{Q}_a &\w{B}_{a^{-1}}\\
\w{B}_a & -\w{Q}_{a^{-1}}
    \end{array}
\right)
\end{equation}
where $\w{B}_a$ is a right eigenvector of the matrix
$(\w{H}^*_{ba}~\otimes~\w{H}'_{ba})_{a,b}$ and the maps
$\w{Q}_a:\w{V}_a\to {V}_a$ satisfy
\begin{equation}\label{main2}
  {{H}}_{ab}\w{Q}_b+ \w{E}_{ab}=\w{Q}_a \w{H}_{ab},\quad  {a,b\in A},
\end{equation}
with $\w{E}_{ab}$ the analogue of $E_{ab}$ with respect to the twin system.

Hence
\begin{equation*}
J^{-1}(\widehat{\mu}[e,a,\hat{v}_{a}])=
{\mu}[e,a,-\w{Q}_a \hat{v}_a]+{\mu}[a,e,\w{B}_a \hat{v}_a].
\end{equation*}

Remember that \eqref{invJ} is also a left inverse for the matrix
\eqref{matJ} and use
Lemma \ref{qq}  to get
\begin{align}\label{serviranno}
  &\w{Q}_aQ_a+\w{B}_{\inv a}B_a=\operatorname{Id}\\
  &\w{B}_a Q_a=-\w{Q}_{\inv a}B_a \label{serviranno3}\\
  & Q_{\inv a}\w{B}_a=-B_a\w{Q}_a\label{serviranno4}
  \end{align}
Now we prove that $J$ preserves the inner product in $\mcH^{\infty}$ defined in  \eqref{inner}.

Due to the structure of multiplicative functions,
by linearity and sesquilinearity, since $\pi$ and $\w{\pi}$ are unitary representations intertwined
by $J$ and $J^{-1},$
it is sufficient to show that
\begin{equation*}
  \langle Jf,J g\rangle=\langle f, g\rangle
\end{equation*}
when either $f=\mu[x,xa,v_a]$ and $g=\mu[x,xa,w_a]$ or
$f=\mu[a_1,e,w_{\inv {a_1}}]$ and $g=\mu[x,xa,v_a],$  where $a_1$ is the first
letter of $x,$ and $xa$ is reduced.
In the first case one has
\begin{equation}\label{primoa}
  \langle \mu[x,xa,v_a],\mu[x,xa,w_a]\rangle
  =B_a(v_a,w_a)
\end{equation}
while
\begin{eqnarray}\label{secondoa}
&& \langle J\mu[x,xa,v_a],J\mu[x,xa,w_a]\rangle \\
&=&
\langle
\widehat{\mu}[e,a,-Q_a v_a]+ \widehat{\mu}[a,e,B_a v_a],
\widehat{\mu}[e,a,-Q_a w_a]+ \widehat{\mu}[a,e,B_a w_a]
\rangle\nonumber\\
&=&
\w{B}_{\inv a}(B_av_a,B_a w_a)+
\w{B}_a (Q_a v_a,Q_a w_a)\;.\nonumber
\end{eqnarray}
Hence \eqref{primoa} will be equal to \eqref{secondoa} if and only if
$$
B_a = B_a^*\w{B}_{\inv a}B_a + Q_a^*\w{B}_aQ_a
= B_a\w{B}_{\inv a}B_a + {B}_a\w{Q}_aQ_a,
$$
(after an application of \eqref{serviranno3} and Lemma \ref{qq}), which is true by  \eqref{serviranno}.

In the second case, since $ \mu[a_1,e,w_{\inv a_1}]$ and
$\mu[x,xa,v_a]$ are orthogonal,
we need to prove $\langle J f,J g\rangle=0.$
Let us suppose that $xa=a_1\dots a_{n}a_{n+1}$  is reduced.
  We have, again by orthogonality
and since $a_1^{-1}xa=a_2\dots \! a_{n+1},$
\begin{eqnarray*}
\langle J f,J g\rangle\!\!\!\!&=&\!\!\!\langle J \mu[a_1,e,w_{\inv a_1}],J \mu[x,xa,v_a]\rangle\\
%
%
&=&\!\!\!
\langle  \widehat{\mu}[e,\inv a_1,-Q_{\inv a_1}w_{\inv a_1}],\w{\pi}(a_2\dots a_n a_{n+1})\,
\widehat{\mu}[e,\inv a_{n+1},B_{a_{n+1}} v_{a_{n+1}}]\rangle\\
& &\!\!\!+
\langle \widehat{\mu}[e,a_1,B_{\inv a_1} w_{\inv a_1}],\w{\pi}(a_1\dots a_n) \,\widehat{\mu}[e,a_{n+1},-Q_{a_{n+1}} v_{a_{n+1}}]\rangle \\
& &\!\!\!
+
\langle \widehat{\mu}[e,a_1,B_{\inv a_1} w_{\inv a_1}], \w{\pi}(a_1\dots a_n a_{n+1})\,\widehat{\mu}[e,\inv a_{n+1},B_{a_{n+1}} v_{a_{n+1}}]\rangle.
\end{eqnarray*}
 Each of the above quantities have been already calculated in  the proof of \cite[Lemma 5.5]{KSS16} and are respectively equal to
\begin{eqnarray*}
& &\!\!\!\!\!\!
-\widehat{B}_{\inv a_1}(Q_{\inv a_1} w_{\inv a_1}\,,\,\widehat{H}_{{\inv a_1} {\inv a_2}}\dots
\widehat{H}_{{\inv a_n} {\inv a_{n+1}}}B_{a_{n+1}} v_{a_{n+1}})\\
& &\!\!\!\!\!\!
-\widehat{B}_{a_{n+1}}(\widehat{H}_{a_{n+1} a_n}\dots
\widehat{H}_{a_2 a_1} B_{\inv a_1} w_{\inv a_1}\,,\, Q_{a_{n+1}} v_{a_{n+1}})\\
& &\!\!\!\!\!\!
+\!\sum_{j=0}^{n-1}\!\widehat{E}_{a_{j+2} a_{j+1}}\!(\widehat{H}_{a_{j+1} a_{j}}\!\dots\!
\widehat{H}_{a_{2}a_{1}}B_{a_1^{-1}} w_{a_1^{-1}},
\widehat{H}_{a_{j+2}^{-1} a_{j+3}^{-1}}\!\dots\!\widehat{H}_{a_{n}^{-1} a_{n+1}^{-1}}B_{a_{n+1}} v_{a_{n+1}}).
\end{eqnarray*}
Therefore, after an application of \eqref{serviranno3}, \eqref{serviranno4}, and Lemma \ref{qq},
everything is proved if we show that, for every $n\geq 1,$
\begin{eqnarray}
& &
H_{a_{n+1} a_n} \dots H_{a_2 a_1} \w{Q}_{ a_1}
-\w{Q}_{a_{n+1}} \w{H}_{a_{n+1} an} \dots \w{H}_{a_2 a_1} \label{fin} \\
& &
+\sum_{j=0}^{n-1}H_{a_{n+1} a_n}\dots  H_{a_{j+3} a_{j+2}}\w{E}_{a_{j+2} a_{j+1}}\w{H}_{a_{j+1} a_j} \dots \w{H}_{a_2 a_{1}}=0.
\nonumber
\end{eqnarray}

The latter can be easily proved by induction on $n\geq 1,$  by means of \eqref{main2}.
 We omit the details, just note that
if $n=1,$  \eqref{fin} is indeed
$${H}_{a_2 a_1}\widehat{Q}_{a_1}
- \widehat{Q}_{a_2}\widehat{H}_{a_2 a_1}
+\widehat{E}_{a_2 a_1}=0.$$
\end{proof}
\begin{thm}
  Assume the reigning hypotheses  of this Section.
  If the two systems $(V_a,H_{b a},B_a)$
  and $(\widehat{V}_a,\widehat{H}_{b a},\widehat{B}_a)$ are  equivalent
  then $\pi$ splits into the sum of two   $\G$-representations.
\end{thm}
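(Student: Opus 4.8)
The plan is to produce a bounded operator on~$\mcH$ that intertwines $\pi$ with itself but is \emph{not} a scalar. Because $\pi$ is unitary, any such operator lies in the von Neumann algebra $\pi(\G)'$, and its spectral projections then split $\mcH$ into $\G$-invariant pieces. The operator will be obtained by comparing the two unitary identifications of $\pi$ with $\w\pi$ that are now at our disposal: the $\G$-equivalence coming from the maps~$Q_a$, and the $\cp$-equivalence coming from the (assumed) equivalence of the underlying systems.

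First I would invoke the preceding theorem, which supplies a unitary $\G$-equivalence $J:\mcH\to\w\mcH$ given on generators by \eqref{J}. On the other hand, since the two systems are equivalent, the result of~\cite{KS04} quoted above shows that $\pi$ and $\w\pi$ are unitarily equivalent \emph{as $\cp$-representations}; let $\Phi:\mcH\to\w\mcH$ be such a $\cp$-equivalence, so that $\Phi$ is unitary, intertwines $\pi$ with $\w\pi$, and commutes with the two $C(\Om)$-actions. I then set $U:=\Phi^{\ast}J$. From $J\pi(g)=\w\pi(g)J$ and $\Phi^{\ast}\w\pi(g)=\pi(g)\Phi^{\ast}$ it follows at once that $U$ is a unitary operator belonging to the commutant $\pi(\G)'$.

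The main point, and the step I expect to require the most care, is to check that $U$ is not a scalar multiple of the identity. I would argue by contradiction: if $U=cI$ then $J=c\Phi$, so $J$ would commute with the entire $C(\Om)$-action, in particular with multiplication by $\one_a$, the characteristic function of $\Om(a)$. To refute this I test it on $f=\mu[e,a,v_a]$ with $v_a\neq 0$. Since $f$ is supported on $\G(a)$ we have $\pi(\one_a)f=f$, and hence, using \eqref{J},
\[
J\pi(\one_a)f=\widehat{\mu}[e,a,-Q_av_a]+\widehat{\mu}[a,e,B_av_a],
\qquad
\w\pi(\one_a)Jf=\widehat{\mu}[e,a,-Q_av_a],
\]
the term $\widehat{\mu}[a,e,B_av_a]$ being supported on $\G(a,e)=\G\setminus\G(a)$ and therefore annihilated by $\w\pi(\one_a)$. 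These two expressions disagree as soon as $B_av_a\neq 0$, which holds because $B_a$ is positive definite, hence invertible. Thus $J$ does not commute with $\one_a$, so $J\neq c\Phi$ for any scalar, and $U$ cannot be scalar.

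Finally, $U$ is a non-scalar unitary in the von Neumann algebra $\pi(\G)'$, so its spectrum contains more than one point; Borel functional calculus then yields a spectral projection $P$ of $U$ with $0\neq P\neq I$ and $P\in\pi(\G)'$. Since $\pi$ is unitary and $P$ commutes with $\pi(\G)$, both $\im P$ and its orthogonal complement are closed $\G$-invariant subspaces, and $\pi=\pi|_{\im P}\oplus\pi|_{(\im P)^{\perp}}$ is the desired splitting of $\pi$ into the sum of two $\G$-representations.
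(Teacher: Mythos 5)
Your proposal is correct, and its core object is the same one the paper uses: the composition of the $\G$-intertwiner $J$ with the inverse of the unitary $\cp$-intertwiner induced by the system equivalence $(K_a)$ (the paper calls the latter $\mcK$ and works with $\inv\mcK J$, which is exactly your $U=\Phi^{\ast}J$). Where you diverge is in how the splitting is extracted. You argue softly: $U$ is a non-scalar unitary in the commutant $\pi(\G)'$ --- and your direct computation with $\one_a$, showing that $J$ fails to commute with the $C(\Om)$-action because of the surviving term $\w{\mu}[a,e,B_av_a]$, is a clean justification of non-scalarity, a point the paper asserts without detail --- and then any nontrivial spectral projection of $U$ yields the decomposition. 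The paper instead applies Proposition~\ref{matricegenerale} to $\mcK\inv J\mcK$, deduces the relation $\ID=\lambda(\inv\mcK J)^2+ic(\inv\mcK J)$ with $\lambda=1$, and concludes that $\inv\mcK J$ has exactly two eigenvalues $\lambda_\pm=\bigl(-ic\pm\sqrt{4-c^2}\bigr)/2$, so that $\mcH$ is the direct sum of the two corresponding eigenspaces. For the statement as literally posed your argument suffices; what the paper's sharper version buys is the explicit pair of projections $(\ID\pm\mcJ)/2$ (after the renormalization of Remark~\ref{autovalori}), which is precisely what is needed downstream in Theorem~\ref{irre-odd} to verify the Finite Trace Condition and establish oddity of the two summands.
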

\begin{rem}
  It will be proved in the next Section that these two $\G$
  representations are indeed
  irreducible and inequivalent.
\end{rem}
\begin{proof}
  Let $(K_a)$ be an equivalence between the two systems. By
Lemma 5.2 of  \cite{KS04} we may assume that
$K_a:V_a\rightarrow\w{V}_a$ is a $A$-tuple of unitary operators satisfying, by
Remark \ref{Kauto}, $K_{\inv a}=K^\ast_a$.
Fix 
a system of $Q_a$ satisfying \eqref{main}. Normalize
the $\w{B}_a$ so that
\begin{equation*}
 J= \left(
 \begin{array}{cc}
   -Q_a & B_{\inv a}\\
   B_a & -Q_a^\ast\\
   \end{array}
 \right)
 \quad
 \inv J=
 \left(
 \begin{array}{cc}
   -\w{Q}_a & \w{B}_{\inv a}\\
  \w{B}_a & -\w{Q}_a^\ast\\
   \end{array}
 \right)
\end{equation*}
and set
 \begin{equation*}
 \mcK=\left(
 \begin{array}{cc}
   K_a & 0\\
   0 & K_{\inv a}\\
   \end{array}\right)\,.
\end{equation*}
 Consider the operator $\mcK \inv J\mcK:\mcH^\infty\rightarrow\w{\mcH}^\infty$.
We may assume that $J$ is unitary as well, so that
$\mcK \inv J\mcK$ is also unitary and, by Theorem \ref{equi}, intertwines the two $\G$-representations
$\pi$ and $\w\pi$.
By Proposition \ref{matricegenerale} it must be of the form
\begin{equation*}
\left(
\begin{array}{cc}
-\lambda Q_a +icK_a &\lambda B_{a^{-1}}\\
\lambda B_a & -\lambda Q_a^{\ast}+icK^*_a\\
\end{array}    \right)\,.
\end{equation*}

We pass to calculate the product of the three matrices. Since  the
term $K_{\inv a}\w{B}_aK_a=K^\ast_a\w{B}_aK_a$ is positive, the same must be true for $\lambda B_a$, so that
$\lambda$ must be positive.
Hence $\mcK \inv J\mcK$ must satisfy the following equation:
\begin{equation*}
  \mcK \inv J\mcK=\lambda J + ic\mcK
\end{equation*}
for some positive $\lambda$ and real $c$. Multiply both sides by $\inv\mcK
J\inv\mcK$ to get the equation:
$$
\operatorname{Id}=\lambda (\inv\mcK J)^2+ic(\inv\mcK J).
$$
Hence the unitary operator $(\inv\mcK J)$
has two complex eigenvalues, say $\lambda_\pm,$ which
are distinct since $(\inv\mcK J)$ is not a scalar. Some elementary algebra
shows that $\lambda$ must be $1$ and $\lambda_\pm= \frac{-ic\pm\sqrt{4-c^2}}2$.
Hence the $\G$-representation splits into the direct sum of two representations
each corresponding to an eigenspace of $(\inv\mcK J)$.
\end{proof}
\begin{rem}\label{autovalori}
  We remark that if we replace $J$ by
  $$
  \tilde{J}= \frac2{\sqrt{4-c^2}}(J-\frac{ic}2\mcK)
  $$
  we still obtain a valid non trivial intertwiner for $\pi$ and $\w\pi$, but
  this choice of $\tilde{J}$ will lead to the unitary operator
  \begin{equation*}
\mcJ=(\inv\mcK )\tilde{J}
    \end{equation*}
  having the simpler eigenvalues $+1$ and $-1$.
\end{rem}

\section{$\G$-irreducibility of multiplicative representations}
We begin this section by recalling the Duplicity and the Oddity Theorems
from \cite{HKS19}. Denote by $\norm{\cdot}_{HS}$ the Hilbert-Schmidt norm
of an operator.
\begin{dthm}\label{DupTheorem} Let $\pi:\Gamma\to\mcU(\mcH)$ be a unitary
  representation of~$\Gamma$. Suppose
  \begin{itemize}
  \item[$\bullet$] $(\pi'_\pm,\mcH'_\pm)$ are two irreducible
    $\cp$-representations, inequivalent as $\cp$-representations.
  \item[$\bullet$] $\iota_\pm:\mcH\to\mcH'_\pm$ are two perfect boundary
    realizations of~$\pi$.
  \item[$\bullet$] The following Finite Trace Condition  holds
    \begin{equation}\tag{FTC}\label{FTC}
      \norm{(\iota_-^* \pi'_-(\one_{a})\iota_-)
      (\iota_+^* \pi_+'(\one_{b})\iota_+)}_{HS}<\infty,\quad a,b\in A, a\neq b.
    \end{equation}
\end{itemize}
 Then
  \begin{itemize}
  \item[$\bullet$] $\pi$~is irreducible as a $\Gamma$-representation.
  \item[$\bullet$] Up to equivalence, $\iota_+$ and $\iota_-$ are the only perfect
    boundary realizations of~$\pi$.
  \item[$\bullet$] Any imperfect boundary realization of~$\pi$ is equivalent to\\
    $\sqrt{t_+}\,\iota_+~\oplus~\sqrt{t_-}\,\iota_-:\mcH\to\mcH'_+\oplus\mcH'_-$
    for constants $t_+,t_->0$ with $t_++t_-=1$.
  \end{itemize}
\end{dthm}
\begin{othm}\label{OddTheorem}
Let $\pi:\Gamma\to\mcU(\mcH)$ be a unitary
representation of~$\Gamma$. Suppose
  \begin{itemize}
  \item[$\bullet$] $(\pi',\mcH')$ is an irreducible $\cp$-representation.
  \item[$\bullet$] $\iota:\mcH\to\mcH'$ is an imperfect realization of~$\pi$.
  \item[$\bullet$] The following Finite Trace Condition  holds
    \begin{equation*}\tag{FTC}
       \norm{(\iota^* \pi'(\one_{a})\iota)
      (\iota^* \pi'(\one_{b})\iota)}_{HS}<\infty,\quad a,b\in A, a\neq b.
    \end{equation*}
  \end{itemize}
  Then
  \begin{itemize}
  \item[$\bullet$] $\pi$~is irreducible as a $\Gamma$-representation.
  \item[$\bullet$] Up to equivalence, $\iota$ is the only boundary realization
    of~$\pi$.
  \end{itemize}
  Observe that the unitary $\Gamma$-action which $\pi'$ gives
  on~$\mcH'$ stabilizes $\mcH_1=\iota(\mcH)$, so it also stabilizes
  the orthogonal complement $\mcH_2=\mcH\ominus\mcH_1$. Let
  $\pi_2:\Gamma\to\mcU(\mcH_2)$ denote the $\Gamma$-action
  on~$\mcH_2$. One can also conclude:
  \begin{itemize}
  \item[$\bullet$] $\pi_2$ is irreducible.
  \item[$\bullet$] $\pi_2$ is inequivalent to $\pi$.
  \end{itemize}
\end{othm}

In order to use the above cited  Theorems  we need to investigate
the Finite Trace
Condition \eqref{FTC} for multiplicative representations.

We have the fundamental
\begin{lem}\label{fundamental}
 Let $(V_a,H_{ba},B_a)$ be a normalized
irreducible system and consider its twin $(\widehat{V}_a,\widehat{H}_{ba},\widehat{B}_a).$
 Assume the existence of
  a tuple of linear maps
$Q_a:V_a\to\widehat{V}_a$ satisfying \eqref{main}.

Let $J$ be the intertwining operator defined in \eqref{J}. Then, for every
$a\neq b\in~A,$ the operator $\w\pi(\one_b) J\pi(\one_a)$ has finite rank.
\end{lem}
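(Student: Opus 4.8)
The plan is to prove the sharper statement that $\w\pi(\one_b)J\pi(\one_a)$ carries all of $\mcH$ into the single finite-dimensional space $\{\w\mu[e,b,\xi]\mid\xi\in\w V_b\}$ of hatted multiplicative functions rooted at the edge $(e,b)$, which forces the rank to be at most $\dim\w V_b$. Since $\pi(\one_a)$, $J$ and $\w\pi(\one_b)$ are all bounded, it suffices to carry out the computation on a dense set and pass to the closure. First I would use that $\pi(\one_a)$ is the orthogonal projection onto the closed span of the elementary functions $\mu[x,xa',v]$ with $x\in\G(a)$ and $|xa'|=|x|+1$: writing any $f$ in the canonical form \eqref{effe} for $N\geq1$, the values $(\pi(\one_a)f)(xa')$ vanish unless $xa'\in\G(a)$, and for $N\geq1$ this happens exactly when $x\in\G(a)$. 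So it is enough to evaluate $\w\pi(\one_b)J$ on a single generator $\mu[x,xa',v]$ with $x\in\G(a)$.

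By the defining formula \eqref{J}, $J\mu[x,xa',v]=\w\mu[x,xa',-Q_{a'}v]+\w\mu[xa',x,B_{a'}v]$. The first summand is supported on $\G(x,xa')=\G(xa')$, and since $x\in\G(a)$ we have $\G(xa')\subseteq\G(a)$, which is disjoint from $\G(b)$ because $a\neq b$; hence $\w\pi(\one_b)$ annihilates it. All of the rank therefore comes from the second, ``leaked'' summand $\w\mu[xa',x,B_{a'}v]$, which is supported on the complementary region $\G(xa',x)=\G\setminus\G(xa')$ and does meet $\G(b)$.

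The key point—and the step I expect to require the most care—is that this leaked summand, once restricted to $\G(b)$, is determined linearly by a single vector of the fixed finite-dimensional space $\w V_b$, independently of how long $x$ is. Concretely, for $y\in\G(b)$ the geodesic from $x$ (which begins with $a$) to $y$ (which begins with $b\neq a$) passes through the root $e$, so the $\w H$-product defining $\w\mu[xa',x,B_{a'}v](y)$ factors through the value at the one-letter word $b$. Since the propagation rule \eqref{mu} is local to the subtree $\G(b)$, the restriction $\w\mu[xa',x,B_{a'}v]\big|_{\G(b)}$ coincides with $\w\mu[e,b,\xi]\big|_{\G(b)}$, where $\xi=\w\mu[xa',x,B_{a'}v](b)\in\w V_b$. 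As $x$, $a'$ and $v$ vary, all these functions lie in the image of the linear map $\xi\mapsto\w\mu[e,b,\xi]\big|_{\G(b)}$, a space of dimension at most $\dim\w V_b$. By boundedness and continuity the image of all of $\mcH$ lands in the closure of this finite-dimensional space, so $\w\pi(\one_b)J\pi(\one_a)$ has finite rank.
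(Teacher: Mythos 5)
Your proposal is correct and follows essentially the same route as the paper: reduce to generators $\mu[x,xa',v]$ with $x\in\G(a)$, observe that $\w\pi(\one_b)$ kills the summand $\w\mu[x,xa',-Q_{a'}v]$ supported in $\G(a)$, and show that the leaked summand $\w\mu[xa',x,B_{a'}v]$ restricted to $\G(b)$ lands in the finite-dimensional space $\{\w\mu[e,b,\xi]\st\xi\in\w V_b\}$. The only cosmetic difference is that the paper establishes this last containment by induction on $|x|$, whereas you argue directly that the propagation rule factors through the value at $b$; these are the same observation.
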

\begin{proof}
We claim that, for fixed $a\neq b,$
$\widehat{\pi}({\bf{1}}_{b})\,J\, \pi({\bf{1}}_{a})$
maps $\mcH^{\infty}$ in a
subspace of $\widehat{\mcH}^{\infty}$ of  finite dimension.
Since finite dimensional subspaces are closed, this implies that
$\widehat{\pi}({\bf{1}}_{b})\,J\, \pi({\bf{1}}_{a})$ has finite rank.
More precisely, let
$\widehat{\mathcal{E}}_b$ be
the subspace of $\widehat{\mcH}^{\infty}$ generated by the set of (equivalence class of)
multiplicative functions
 $\{\widehat{\mu}[e,b,\widehat{v}_b],\;v_b\in \widehat{V}_b\}.$

It is clear that $\widehat{\mathcal{E}}_b$ is itself a finite dimensional space.
We are going to show that
\begin{equation}\label{contt}
\widehat{\pi}({\bf{1}}_{b})\,J\, \pi({\bf{1}}_{a})(\mcH^{\infty})\subset\widehat{\mathcal{E}}_b.
\end{equation}

By linearity and modulo the equivalence relation, it is sufficient to prove \eqref{contt}
for functions like $\mu[x,xc,v_c]$ with $|xc|=|x|+1.$

If $x\notin\Gamma(a),$ we have ${\bf{1}}_{\Gamma(a)}\mu[x,xc,v_c]=0,$ therefore
$$\widehat{\pi}({\bf{1}}_{b})\,J\, \pi({\bf{1}}_{a})(\mu[x,xc,v_c])=
\widehat{\pi}({\bf{1}}_{b})\,J\, ({\bf{1}}_{\Gamma(a)}\mu[x,xc,v_c])=0.$$

If $x\in\Gamma(a),$ we have ${\bf{1}}_{\Gamma(a)}\mu[x,xc,v_c]=\mu[x,xc,v_c],$ and,
since $a\neq b,$
\begin{eqnarray*}
& &\widehat{\pi}({\bf{1}}_{b})\,J\, \pi({\bf{1}}_{a})(\mu[x,xc,v_c])=
\widehat{\pi}({\bf{1}}_{b})\,J\, (\mu[x,xc,v_c])\\
&=&{\bf{1}}_{\Gamma(b)}(\widehat{\mu}[x,xc,-Q_c v_c]+\widehat{\mu}[xc,x,B_c v_c])
={\bf{1}}_{\Gamma(b)}(\widehat{\mu}[xc,x,B_c v_c]).
\end{eqnarray*}

It is easy to show by induction on $|x|$ that $x\in\Gamma(a)$ implies,
for all $w\in \widehat{V}_{c^{-1}},$
${\bf{1}}_{\Gamma(b)}(\widehat{\mu}[xc,x,w])\in \widehat{\mathcal{E}}_b.$

Indeed, if $|x|=1,$ then $x=a,$ and since $ac$ is reduced
\begin{eqnarray*}
& &{\bf{1}}_{\Gamma(b)}(\widehat{\mu}[ac,a,w])
=
{\bf{1}}_{\Gamma(b)}(\sum_{a'\neq c}\widehat{\mu}[a,a a',\widehat{H}_{a' c^{-1}}w])\\
&=&
{\bf{1}}_{\Gamma(b)}(\widehat{\mu}[a,e,\widehat{H}_{a^{-1} c^{-1}}w])
=
{\bf{1}}_{\Gamma(b)}(\sum_{a'\neq a}\widehat{\mu}[e,a',\widehat{H}_{a' a^{-1}}\widehat{H}_{a^{-1} c^{-1}}w])\\
&=&
{\bf{1}}_{\Gamma(b)}(\widehat{\mu}[e,b,\widehat{H}_{b a^{-1}}\widehat{H}_{a^{-1} c^{-1}}w])
=
\widehat{\mu}[e,b,\widehat{H}_{b a^{-1}}\widehat{H}_{a^{-1} c^{-1}}w]\in \w{\mathcal{E}}_b.
\end{eqnarray*}

Next we suppose the statement is true for $|x|=N-1$ and we consider $|x|=N,$ $x=x_1\dots x_N.$
Repeating twice the previous argument, we have
if $|xc|=|x|+1,$ by the induction hypothesis
\begin{eqnarray*}
& &{\bf{1}}_{\Gamma(b)}(\widehat{\mu}[xc,x,w])
=
{\bf{1}}_{\Gamma(b)}(\widehat{\mu}[x,x x_{N}^{-1},\widehat{H}_{x_{N}^{-1} c^{-1}}w])\\
&=&
{\bf{1}}_{\Gamma(b)}(\widehat{\mu}[x_1\dots x_{N-1},x_1\dots x_{N-2},
\widehat{H}_{x_{N-1}^{-1} x_N^{-1}}\widehat{H}_{x_{N}^{-1} c^{-1}}w])\in
\widehat{\mathcal{E}}_b.
\end{eqnarray*}
\end{proof}

We shall now consider the case of inequivalent twin systems:

\begin{thm}\label{irre}
  Let $(V_a,H_{ba},B_a)$ be a normalized
irreducible system and let $(\widehat{V}_a,\widehat{H}_{ba},\widehat{B}_a)$
be its twin.
Assume that the two systems are not equivalent. Assume moreover that the
$\G$-representations $\pi$ and $\w{\pi}$ arising from $(V_a,H_{ba},B_a)$ and
$(\widehat{V}_a,\widehat{H}_{ba},\widehat{B}_a)$ are equivalent.
Then $\pi$ satisfies duplicity. In particular
$\pi$ is irreducible as $\G$-representation.
\end{thm}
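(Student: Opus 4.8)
The plan is to deduce duplicity from the Duplicity Theorem of \cite{HKS19}, applied to $\pi$ with its two perfect boundary realizations taken to be the identity realization on $\mcH$ and the realization of $\pi$ inside $\w\mcH$ furnished by the intertwiner $J$.

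First I would record the two realizations. By Proposition \ref{perfect-itself} the pair $\iota_+ = \operatorname{Id}\colon\mcH\to\mcH$ is a perfect boundary realization of $\pi$, with ambient $\cp$-representation $\pi'_+ = \pi$. For the second, I would invoke the hypothesis that $\pi$ and $\w\pi$ are equivalent as $\G$-representations: the unitary intertwiner $J$ of \eqref{J}, built in Theorem \ref{equi} from a tuple $Q_a$ satisfying \eqref{main} and shown to preserve the inner product, is a $\G$-unitary equivalence of $\mcH$ onto $\w\mcH$. Since $(\operatorname{Id},\w\mcH)$ is a perfect boundary realization of $\w\pi$ (again Proposition \ref{perfect-itself}, for the twin system) and $J$ carries $\mcH$ onto the $C(\Om)$-cyclic space $\w\mcH$, the map $\iota_- = J$ is a perfect boundary realization of $\pi$ with ambient $\cp$-representation $\pi'_- = \w\pi$. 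Both $\pi'_\pm$ are irreducible as $\cp$-representations by \cite[Theorem 5.3]{KS04}, and they are inequivalent as $\cp$-representations precisely because the two systems are inequivalent, by the characterization recalled after Definition \ref{equiv-sys}.

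It then remains to verify the Finite Trace Condition \eqref{FTC}. Using $J^* = J^{-1}$ one computes $\iota_+^*\,\pi'_+(\one_b)\,\iota_+ = \pi(\one_b)$ and $\iota_-^*\,\pi'_-(\one_a)\,\iota_- = J^{-1}\,\w\pi(\one_a)\,J$, so that the operator in \eqref{FTC} equals $J^{-1}\,\w\pi(\one_a)\,J\,\pi(\one_b)$ for $a\neq b$. This is exactly where Lemma \ref{fundamental} enters: it guarantees that $\w\pi(\one_a)\,J\,\pi(\one_b)$ has finite rank whenever $a\neq b$. Composing on the left with the bounded operator $J^{-1}$ preserves finite rank, and every finite-rank operator is Hilbert--Schmidt, so \eqref{FTC} holds.

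With the three hypotheses in place, the Duplicity Theorem gives at once that $\pi$ is irreducible as a $\G$-representation, that $\iota_+$ and $\iota_-$ are, up to equivalence, its only perfect boundary realizations, and that its imperfect boundary realizations are exactly the maps $\sqrt{t_+}\,\iota_+\oplus\sqrt{t_-}\,\iota_-$ with $t_++t_-=1$; this is the assertion that $\pi$ satisfies duplicity. I expect no serious obstacle beyond the Finite Trace Condition, and that has been reduced cleanly to the finite-rank statement of Lemma \ref{fundamental}; the remaining effort is the bookkeeping needed to match the concrete data $(\operatorname{Id},\mcH)$ and $(J,\w\mcH)$ to the abstract hypotheses of the Duplicity Theorem.
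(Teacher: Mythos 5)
Your proposal is correct and follows essentially the same route as the paper: identity realization plus the realization through the unitary intertwiner $J$, inequivalence of the ambient $\cp$-representations from inequivalence of the systems, and the Finite Trace Condition reduced via unitarity of $J$ to the finite-rank statement of Lemma \ref{fundamental}, after which the Duplicity Theorem is applied. The only cosmetic difference is that the paper writes the FTC operator as $\widehat{\pi}(\one_b)\,J\,\pi(\one_a)$ directly, whereas you carry the extra bounded factor $J^{-1}$ on the left, which changes nothing.
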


\begin{proof}
  Let  $\iota_{+}=\Id:\mcH\rightarrow\mcH,$
and $\iota_{-}=\Id\circ J:\mcH\rightarrow\widehat{\mcH}$ where
$J:\mcH\to\widehat{\mcH}$ is as in \eqref{J}.
  It is clear that $\iota_+$ is perfect and, since $J$ is a bijection, the same is true for $\iota_-$
(see Proposition \ref{perfect-itself}).
Since both $\iota_-$ and $\iota_+$ are unitary operators the Finite Trace Condition becomes simply:
$$
\| \widehat{\pi}({\bf{1}}_{b})\iota_-\iota_+^*
\pi({\bf{1}}_{a})\|_{HS}<+\infty\,,
$$
for $a,b\in A,$ $a\neq b.$
Since $\iota_-\iota_+^*$ is nothing but $J$ the result follows from
Lemma~\ref{fundamental} and the Duplicity Theorem~\ref{DupTheorem}.
\end{proof}

Let us turn now to equivalent twin systems:
\begin{thm}\label{irre-odd}
 Let $(V_a,H_{ba},B_a)$ be a normalized
irreducible system and let $(\widehat{V}_a,\widehat{H}_{ba},\widehat{B}_a)$
be its twin.
Assume that the two systems are equivalent. Then $\pi$ splits into the direct
sum of two irreducible $\G$-representations both satisfying oddity.
  \end{thm}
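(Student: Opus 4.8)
The plan is to exhibit each summand of the splitting $\pi=\pi_+\oplus\pi_-$ as an imperfect boundary realization sitting inside the irreducible $\cp$-representation $(\pi,\mcH)$, and then to invoke the Oddity Theorem~\ref{OddTheorem} once for each summand. By the previous theorem the self-adjoint unitary $\mcJ=(\inv\mcK)\tilde J$ of Remark~\ref{autovalori} has eigenvalues $\pm1$; I would write $\mcH=\mcH_+\oplus\mcH_-$ for its eigenspace decomposition, let $P_\pm$ be the associated orthogonal projections, and set $\pi_\pm=\pi|_{\mcH_\pm}$. Since $\mcJ$ is a $\G$-self-intertwiner, each $\mcH_\pm$ is $\G$-invariant, so the inclusion $\iota_\pm\colon\mcH_\pm\hookrightarrow\mcH$ is an isometric $\G$-inclusion; it is imperfect because $\mcH_\pm$ is a proper subspace. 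To see it is a boundary realization I would check cyclicity of $\mcH_\pm$ for $C(\Om)$: the subspace $\overline{C(\Om)\mcH_\pm}$ is manifestly $C(\Om)$-invariant and is also $\G$-invariant, because $\pi(g)(G\,v)=(\lambda(g)G)\,\pi(g)v$ with $\pi(g)v\in\mcH_\pm$; being a nonzero $\cp$-invariant subspace it must equal $\mcH$ by the $\cp$-irreducibility of $\pi$.

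The core of the proof is verifying the Finite Trace Condition~\eqref{FTC}, and here I would lean on Lemma~\ref{fundamental}. Because the cylinders $\Om(a)$, $a\in A$, partition $\Om$, one has $\sum_a\pi(\one_a)=\Id$ and $\sum_a\w\pi(\one_a)=\Id$. Inserting these partitions of unity and using that $\w\pi(\one_b)J\pi(\one_a)$ has finite rank whenever $a\neq b$, I would deduce that both $J\pi(\one_a)$ and $\w\pi(\one_a)J$ coincide with $\w\pi(\one_a)J\pi(\one_a)$ modulo finite rank; hence $J\pi(\one_a)-\w\pi(\one_a)J$ is finite rank. Since $\mcK$ is a genuine $\cp$-intertwiner it commutes exactly with the $C(\Om)$-action, i.e.\ $\pi(\one_a)\inv\mcK=\inv\mcK\w\pi(\one_a)$, so that
\begin{equation*}
[\mcJ,\pi(\one_a)]=\frac{2}{\sqrt{4-c^2}}\,\inv\mcK\bigl(J\pi(\one_a)-\w\pi(\one_a)J\bigr)
\end{equation*}
is finite rank as well. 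Consequently the off-diagonal block $P_\mp\pi(\one_a)P_\pm=-P_\mp[P_\pm,\pi(\one_a)]$ is finite rank, since $[P_\pm,\pi(\one_a)]=\pm\tfrac12[\mcJ,\pi(\one_a)]$.

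With this in hand the FTC is immediate: for $a\neq b$ one has $\pi(\one_a)\pi(\one_b)=0$, so writing the middle projection as $P_\pm=\Id-P_\mp$ gives
\begin{equation*}
(\iota_\pm^*\pi(\one_a)\iota_\pm)(\iota_\pm^*\pi(\one_b)\iota_\pm)
=-\,P_\pm\pi(\one_a)\,P_\mp\pi(\one_b)P_\pm,
\end{equation*}
a product of a bounded operator with the finite-rank operator $P_\mp\pi(\one_b)P_\pm$, hence finite rank and in particular Hilbert--Schmidt. The Oddity Theorem~\ref{OddTheorem} then applies to $\iota_+\colon\mcH_+\to\mcH$: it yields that $\pi_+$ is irreducible, that $\iota_+$ is its unique boundary realization --- so $\pi_+$ satisfies oddity --- and that the complementary action on $\mcH_-$ is irreducible and inequivalent to $\pi_+$. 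Running the same argument for $\iota_-$ shows that $\pi_-$ satisfies oddity too, so $\pi=\pi_+\oplus\pi_-$ is the desired decomposition into two irreducible $\G$-representations each satisfying oddity. The step I expect to be the main obstacle is precisely the finite-rank estimate for the off-diagonal blocks: it rests on upgrading Lemma~\ref{fundamental} to the assertion that $J$ commutes with every $\pi(\one_a)$ modulo finite-rank operators, together with the contrasting fact that $\mcK$, being an honest $\cp$-intertwiner, commutes with $C(\Om)$ on the nose --- it is the tension between these two that makes the coupling between $\mcH_+$ and $\mcH_-$ under $C(\Om)$ finite rank, which is exactly what the Finite Trace Condition needs.
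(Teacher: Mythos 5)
Your proposal is correct and follows essentially the same route as the paper: the same eigenspace decomposition of $\mcH$ under $\mcJ=\inv\mcK\tilde J$, the same reduction of the Finite Trace Condition to Lemma~\ref{fundamental}, and the same final appeal to the Oddity Theorem. The only (harmless) difference is bookkeeping in the FTC step: the paper uses $\pi(\one_a)\pi(\one_b)=0$ to collapse $\pi(\one_a)\bigl(\tfrac{\ID+\mcJ}{2}\bigr)\pi(\one_b)$ directly to a multiple of $\inv\mcK\,\w\pi(\one_a)J\pi(\one_b)$ in one line, whereas you first upgrade Lemma~\ref{fundamental} via partitions of unity to the statement that $[\mcJ,\pi(\one_a)]$ is finite rank and then pass through the off-diagonal blocks $P_\mp\pi(\one_a)P_\pm$; both arguments are valid.
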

\begin{proof}
  Let, as in Remark~\ref{autovalori},
 $$
  \tilde{J}= \frac2{\sqrt{4-c^2}}(J-\frac{ic}2\mcK)
  $$
  where $J$ is the intertwining operator defined in~\ref{J}.
  It is more convenient to work with
  $$
  \mcJ=\inv\mcK \tilde{J}
  $$
   since its eigenvalues are $\pm1$.
  Let $\mcH_1$, $\mcH_2$, be the eigenspaces corresponding to $1$ and $-1$.
  One has $\mcH=\mcH_1\oplus\mcH_2$. Let $\pi_1$ be the restriction of $\pi$
  to $\mcH_1$.
  It is obvious that the inclusion map $\iota:
  \mcH_1\hookrightarrow\mcH$
  gives an imperfect realization for
  $\pi_1$.
  We shall prove that $\pi_1$ satisfies the hypothesis of the
  Oddity Theorem~\ref{OddTheorem}.
  For every $a\neq b$ we need to compute
 \begin{equation*}
       \norm{(\iota^* \pi(\one_{a})\iota
      \iota^* \pi(\one_{b})\iota)}_{HS}\;.
    \end{equation*}
 Observe that $\iota\iota^*$ is the projection onto $\mcH_1$, which is equal
 to $(\operatorname{Id}+\mcJ)/2$. Since $\pi(\one_a)\pi(\one_b)=0$ we have
 \begin{eqnarray}
   \pi(\one_{a})\iota
      \iota^* \pi(\one_{b})\iota)&=&\pi(\one_a)\left(\frac{\ID+\mcJ}2\right)\pi(\one_b)=
   \pi(\one_a)\left(\frac{\mcJ}2\right)\pi(\one_b) \nonumber\\
   &= & \frac12 \pi(\one_a)\inv\mcK\tilde{J}\pi(\one_b)\label{unoaJunob}\;.
  \end{eqnarray}
 The operator in the last line \eqref{unoaJunob}
 is a scalar multiple of
 $$
 \pi(\one_a)\left(\inv\mcK J- \frac{ic}2\ID\right)\pi(\one_b)=
 \pi(\one_a)\left(\inv\mcK J\right)\pi(\one_b).
 $$
 Since $\mcK$ intertwines
 $\pi$ to $\w\pi$ we have
 $$
 \pi(\one_a)\left(\inv\mcK J\right)\pi(\one_b)
 =\inv\mcK\w\pi(\one_a) J\pi(\one_b)\;.
 $$

 By Lemma~\ref{fundamental}, $\w\pi'(\one_a) J\pi'(\one_b)$ is a finite rank
 operator, and the same is true for each of the operators appearing
 at every step, up to the operator
 $\iota^* \pi'(\one_{a})\iota\iota^* \pi'(\one_{b})\iota,$
 so that its Hilbert--Schmidt norm is finite.
 Apply now Theorem~\ref{OddTheorem}.
\end{proof}
\begin{rem}
  The other representation arising from the eigenspace of $-1$ of $\mcJ$
  also satisfies the hypothesis of the Oddity Theorem: one can consult
  \cite{HKS19} or can calculate directly the FTC condition since
  the projection onto the other eigenspace is $(\ID-\mcJ)/2$.
\end{rem}

\begin{thm}\label{equi2}
 Assume that $\pi_1$ and $\pi_2$
  are multiplicative representations  built up from two irreducible
  inequivalent systems
  $(V^1_a,H^1_{b a},B^1_a)$
  and $(V^2_a,H^2_{b a},B^2_a)$.
  Assume that $\pi_1$ and $\pi_2$ are equivalent as $\G$-rep\-res\-enta\-tions.
  Then the two systems are twin.
\end{thm}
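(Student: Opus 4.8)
The plan is to turn a $\G$-equivalence between $\pi_1$ and $\pi_2$ into two competing perfect boundary realizations of a single $\G$-representation, and then let the classification already assembled in this paper force those realizations to be the ``system and twin'' pair.

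First I would fix a unitary $U\colon\mcH_1\to\mcH_2$ intertwining $\pi_1$ and $\pi_2$ as $\G$-representations. Since every multiplicative representation is a perfect boundary realization of itself (Proposition \ref{perfect-itself}), both $(\Id,\pi_1)\colon\mcH_1\to\mcH_1$ and $(U,\pi_2)\colon\mcH_1\to\mcH_2$ are perfect boundary realizations of the one $\G$-representation $\pi_1$: each is a unitary $\G$-inclusion with image cyclic (indeed equal) to the target. I would then check that these two realizations are \emph{inequivalent} as $\cp$-representations. An equivalence in the sense of Definition \ref{equiv-cp} would be a $\cp$-unitary $\Phi\colon\mcH_1\to\mcH_2$ with $\Phi\circ\Id=U$; such a $\Phi$ would exhibit $\pi_1$ and $\pi_2$ as equivalent $\cp$-representations, whence the two systems would be equivalent by \cite{KS04}, contradicting the standing hypothesis. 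So $\pi_1$ possesses two inequivalent perfect boundary realizations.

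The second step is to show that this pins us to case \textbf{AI}. By the classification assembled here, every multiplicative representation built from an irreducible system is monotonous (cases \textbf{AII}, \textbf{BII}, by \cite{KSS16}), duplicitous (case \textbf{AI}, Theorem \ref{irre}), or splits into two odd pieces (case \textbf{BI}, Theorem \ref{irre-odd}). Monotony supplies only a single boundary realization, so it is incompatible with the two inequivalent perfect ones just produced. The delicate point, which I expect to be the main obstacle, is excluding the odd case \textbf{BI}; here I would use that $\pi_1$ is irreducible \emph{as a $\cp$-representation} \cite{KS04}. If $\rho$ is any perfect realization of $\pi_1$, write $\pi_1|_\G\cong\sigma_+\oplus\sigma_-$ for the canonical splitting into inequivalent odd irreducibles. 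The $\G$-invariant copy of $\sigma_+$ inside $\rho$ has a $C(\Om)$-closure that is automatically $\cp$-invariant (a $\G$-translate of a product $G\cdot\iota(v)$ equals $(\lambda(g)G)\cdot\iota(\rho(g)v)$, of the same form), hence equals all of $\rho$ by $\cp$-irreducibility. Thus $\sigma_+\hookrightarrow\rho$ is an \emph{imperfect} boundary realization of the odd representation $\sigma_+$, and the uniqueness clause of the Oddity Theorem \ref{OddTheorem} determines $\rho$ up to $\cp$-equivalence. A unique perfect realization again contradicts the two inequivalent ones, so \textbf{BI} is impossible and $\pi_1$ must be duplicitous.

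Finally I would close using duplicity. In case \textbf{AI} the maps $Q_a$ of \eqref{main} exist, so the intertwiner $J$ of Theorem \ref{equi} gives $\pi_1\cong\w\pi_1$; the hypotheses of Theorem \ref{irre} therefore hold, and $\pi_1$ satisfies duplicity with its two perfect realizations being precisely the self-realization $(\Id,\pi_1)$ and the twin $(J,\w\pi_1)$. The realization coming from system $2$ is perfect and inequivalent to the self-realization, so by the uniqueness intrinsic to duplicity it must be equivalent, as a $\cp$-representation, to the twin realization. Hence $(V^2_a,H^2_{ba},B^2_a)$ is $\cp$-equivalent to $(\w V^1_a,\w H^1_{ba},\w B^1_a)$, and therefore equivalent as a matrix system to the twin of the first by \cite{KS04}. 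That is exactly the assertion that the two systems are twin.
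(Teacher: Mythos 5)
Your argument is correct and ends exactly where the paper's does: once $\pi_1$ is known to be duplicitous with perfect realizations $(\Id,\pi_1)$ and $(J,\w\pi_1)$, the perfect realization $(U,\pi_2)$, being inequivalent to the first, must be equivalent to the second, and the rigidity of $\cp$-equivalence from \cite{KS04} turns that into equivalence of system~2 with the twin of system~1. Where you differ is in how you reach case \textbf{AI}. The paper splits on whether the systems are equivalent to their own twins: in the main case it builds the explicitly \emph{imperfect} realization $\iota=\tfrac1{\sqrt2}(\Id\oplus\Id T)$, invokes Proposition \ref{impimp} to produce a good vector, and uses Theorem \ref{pregressi} to rule out \textbf{AII}; the case in which both systems are self-twin is disposed of separately by matching the odd summands of $\pi_1$ and $\pi_2$. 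You instead exhibit two inequivalent \emph{perfect} realizations and run the trichotomy for $\pi_1$ alone, excluding monotony by counting realizations and excluding \textbf{BI} by restricting each perfect realization to one odd summand $\sigma_+$ and invoking the uniqueness clause of the Oddity Theorem \ref{OddTheorem}. This buys you a genuine economy: you never construct the direct-sum realization, never pass through the good-vector bound, and never need to discuss system~2's relation to its own twin. The price is that your exclusion of monotony leans on the assertion that case \textbf{BII} is monotonous, which the paper only claims is ``virtually unchanged'' from \cite{KSS16}, whereas the paper's main case needs only the fully proved \textbf{AII} half of Theorem \ref{pregressi}.

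One step should be tightened. In the \textbf{BI} exclusion, the Oddity Theorem hands you a $\cp$-unitary $\Phi:\mcH_1\to\mcH_2$ satisfying $\Phi\iota_+=U\iota_+$ on $\sigma_+$ only; nothing forces $\Phi=U$ on all of $\mcH_1$, so you have not literally shown that the two boundary realizations of $\pi_1$ coincide, and ``a unique perfect realization contradicts the two inequivalent ones'' is not the right way to close. The contradiction you actually want is the one you already set up in your first step: $\Phi$ exhibits $\pi_1\cong\pi_2$ as $\cp$-representations, hence the two systems would be equivalent by \cite{KS04}, against the standing hypothesis. (This is also precisely how the paper kills its ``both self-twin'' case.) Note too that for $(U\iota_+,\pi_2)$ to be a boundary realization of $\sigma_+$ you need $C(\Om)$-cyclicity of its image in $\mcH_2$, which uses the $\cp$-irreducibility of $\pi_2$, not only of $\pi_1$; this holds by \cite{KS04} since system~2 is irreducible, but it is worth saying.
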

\begin{proof}
  Assume first that at least one of the two systems,
  let us say $(V^1_a,H^1_{b a},B^1_a),$  is not equivalent to its twin.

  As per Proposition \ref{perfect-itself} each $\pi_i$ ($i=1,2$)
  provides a perfect realization of itself. Assume that $T:\mcH_1\to\mcH_2$
  intertwines $\pi_1$ to $\pi_2$.

Define
$\iota:\mcH_1\rightarrow \mcH_1\oplus {\mcH}_2$  by
$\iota=\frac{1}{\sqrt{2}}(\operatorname{Id}\oplus\operatorname{Id}T)$
and $\pi'=\pi_1\oplus\pi_2$.

Let $\mcH'$ be the closure in $\mcH_1\oplus\mcH_2$
of the image of $\pi'(C(\Om))[\pi'(\mcH_1)]$.

It follows that $\iota$ is a boundary realization for $\pi_1$.
Moreover $(\pi',\mcH')$ will be perfect if and only if the operator $T$,
which intertwines
$\pi_1$ to ${\pi}_2$ intertwines also the two actions of $C(\Om)$
on $\mcH_1$ and $\mcH_2$.  Since this is impossible because $\pi_1$ and $\pi_2$
are inequivalent as $\cp$ representations,
Proposition \ref{impimp} applied to $\pi_1$ ensures that $\mcH_1$ contains
some nonzero vector satisfying \eqref{GVBo}.

On the other hand we know from
Theorem \ref{pregressi} that in this case there exists a tuple of linear
maps $Q_a:V^1_a\to \w{V^1}_a$ satisfying \eqref{main}.

By Theorem \ref{equi}
$\pi_1$ and $\w{\pi_1}$ are equivalent, by Lemma \ref{fundamental} they also
satisfy (FTC).

By Theorem \ref{DupTheorem}
$(\operatorname{Id},\mcH_1)$ and $(\operatorname{Id}J,\w{\mcH}_1)$
are the only perfect boundary realizations, hence the latter must be
equivalent to $(\operatorname{Id}T,{\mcH}_2)$.

Assume now that both systems $(V^1_a,H^1_{b a},B^1_a)$
and $(V^2_a,H^2_{b a},B^2_a)$
are equivalent to their twin: we shall see that this is impossible.

If $(V^1_a,H^1_{b a},B^1_a)$ is equivalent to its twin
by Theorem~\ref{irre-odd}, $\pi_1$ splits into the direct sum of two irreducible
inequivalent $\G$-representations $\pi_1^\pm$ and the same is true for $\pi_2.$
Since $\pi_1$ and $\pi_2$ are equivalent, one of the addends of $\pi_1$, say
$\pi_1^+$ is equivalent to one of the addends of $\pi_2$, say $\pi_2^+$.
By the Oddity Theorem~\ref{OddTheorem} $\pi_1^+$ and $\pi_2^+$ admit
\textbf{exactly one } boundary realization, and this implies that
$(V^1_a,H^1_{b a},B^1_a)$
and $(V^2_a,H^2_{b a},B^2_a)$ are equivalent: a contradiction.
\end{proof}

%


%
%



\end{document}